\documentclass{article}
\usepackage{amssymb,latexsym,amsmath,amsthm,wasysym}
\usepackage[dvips]{graphics}
\usepackage{pstricks}
\usepackage{enumerate}

\newtheorem{theorem}{Theorem}[section]
\newtheorem{lemma}[theorem]{Lemma}
\newtheorem{corollary}[theorem]{Corollary}
\newtheorem{proposition}[theorem]{Proposition}

\theoremstyle{definition}
\newtheorem{definition}[theorem]{Definition}
\newtheorem{remark}[theorem]{Remark}

\newcommand{\Cok}{{ \rm Cok }}
\newcommand{\Coker}{{ \rm Coker }}
\newcommand{\End}{{ \rm End }}

\newcommand{\Ext}{{ \rm Ext }}
\newcommand{\Hom}{{ \rm Hom }}

\newcommand{\Res}{{ \rm Res }}

\newcommand{\add}{{ \rm add }}

\newcommand{\Ker}{{ \rm Ker }}
\newcommand{\Mod}{{ \rm Mod }}

\newcommand{\Tor}{{ \rm Tor }}

\newcommand{\rad}{{ \rm rad }}

\newcommand{\g}{\hbox{-}}
\newcommand{\Endol}[1]{\hbox{\rm endol}\,(#1)}

\newcommand{\hueca}[1]{\mathbb{#1}}
\renewcommand{\mod}{{\rm mod }}

\newcommand{\rightmap}[1]{\smash{\mathop{\hbox to
20pt{\rightarrowfill}}\limits^{#1}}}
\newcommand{\leftmap}[1]{\smash{\mathop{\hbox to
20pt{\leftarrowfill}}\limits^{#1}}}

\newcommand{\longrightmap}[1]{\smash{\mathop{\hbox to
4cm{\rightarrowfill}}\limits^{#1}}}
\newcommand{\longleftmap}[1]{\smash{\mathop{\hbox to
4cm{\leftarrowfill}}\limits^{#1}}}
\newcommand{\medrightmap}[1]{\smash{\mathop{\hbox to
2cm{\rightarrowfill}}\limits^{#1}}}
\newcommand{\medleftmap}[1]{\smash{\mathop{\hbox to
2cm{\leftarrowfill}}\limits^{#1}}}
  
\newcommand{\shortlmapup}[1]
{\uparrow\rlap{$\vcenter{\hbox{$\scriptstyle#1$}}$}}

\newcommand{\shortlmapdown}[1]
{\downarrow\rlap{$\vcenter{\hbox{$\scriptstyle#1$}}$}}


\newcommand{\shortrmapdown}[1]
{\big\downarrow\rlap{$\vcenter{\hbox{$\scriptstyle#1$}}$}}


\newcommand{\shortrmapup}[1]
{\big\uparrow\rlap{$\vcenter{\hbox{$\scriptstyle#1$}}$}}

   \begin{document}
   \setcounter{page}{1}

\date{}
\title{\bf Generic modules for the category of
modules filtered by standard modules}
\vskip-1cm
\author{R. Bautista, E. P\'erez and L. Salmer\'on}
\vskip-1cm
\maketitle

\renewcommand{\thefootnote}{}

\footnote{2010 \emph{Mathematics Subject Classification}:
 16G60, 16G70, 16G20.}

\footnote{\emph{Keywords and phrases}: differential tensor
algebras, ditalgebras,  quasi-hereditary algebras, standardly stratified algebras, homological systems,
  tame algebras, reduction functors, generic modules.}
  
\begin{abstract}
 Here we show that, given a finite homological system $({\cal P},\leq,\{\Delta_u\}_{u\in {\cal P}})$ for a finite-dimensional algebra $\Lambda$ over an algebraically closed field, the category ${\cal F}(\Delta)$ of $\Delta$-filtered modules is tame if and only if, for any $d\in \hueca{N}$, there are only finitely many isomorphism classes of generic $\Lambda$-modules adapted to ${\cal F}(\Delta)$ with endolength $d$. We study the relationship between these generic modules and one-parameter families of indecomposables in ${\cal F}(\Delta)$. This study applies in particular to the category of modules filtered by standard modules for  standardly stratified algebras. This article includes a correction of an error in \cite{bpsqh}.
\end{abstract}

\section{Introduction}
Throughout this work $k$ denotes a fixed algebraically closed field. For any $k$-algebra $\Lambda$, we will denote by $\Lambda\g \Mod$ the category of left $\Lambda$-modules and by $\Lambda\g\mod$ its full subcategory of finite-dimensional modules.

  Recall that given a finite-dimensional $k$-algebra $\Lambda$ and a $\Lambda$-module $G$,   the
\emph{endolength} of $G$ is its length as a right
$\End_\Lambda(G)^{op}$-module. The $\Lambda$-module $G$ is called
\emph{generic} if it is indecomposable, of infinite length
as a $\Lambda$-module, but with finite endolength. The
algebra $\Lambda$ is called \emph{generically tame} if, for
each $d\in \hueca{N}$, there are only   finitely many 
isoclasses of generic $\Lambda$-modules with endolength $d$.
This notion was introduced by W. W. Crawley-Boevey in
\cite{CB2}, where he proved  that the class of generically tame algebras
coincides 
with  the class of tame algebras. Recall that an algebra
$\Lambda$ is \emph{tame} 
if  the pairwise
non-isomorphic indecomposable modules in each dimension can
be parametrized by a finite number of one-parameter
families. Thus, the
notion of generic tameness, permits to  generalize the notion of
tameness for arbitrary fields. 

In order to be more precise, we recall  some definitions
from \cite{BPS2}. 

\begin{definition} For any $k$-algebra $R$ and any  $M\in   R\g\Mod$,
    the \emph{endolength} of
$M$, denoted by $\Endol {M}$,  is its
length as an $\End_R
(M)^{op}$-module.  The
$R$-module $M$ is called \emph{pregeneric} iff it is an
 indecomposable with finite endolength but with  infinite dimension over $k$.
\end{definition}

If $R$ is a finite-dimensional $k$-algebra, the notion of
pregeneric $R$-module coincides with the usual notion of
generic $R$-module.

In this article we study generic $\Lambda$-modules adapted to the category of $\Delta$-filtered modules ${\cal F}(\Delta)$ in the case where $\Lambda$ is a finite-dimensional algebra and $({\cal P},\leq,\{\Delta_u\}_{u\in {\cal P}})$ is a homological system for $\Lambda$. In the following, we recall the basic terminology from \cite{MSX} and \cite{hsb}.

A \emph{preordered set} $({\cal P},\leq)$ is a non-empty set ${\cal P}$ together with a reflexive transitive relation $\leq$. We have the equivalence relation in ${\cal P}$ defined by $u\sim v$ iff $u\leq v$ and $v\leq u$. The set $\overline{\cal P}:={\cal P}/\sim$  of equivalences classes is partially ordered by the relation $\overline{u}\leq \overline{v}$ iff $u\leq v$, where $\overline{u}$ and $\overline{v}$ denote the equivalence classes of $u$ and $v$, respectively.

 \begin{definition}\label{D: homological system y F(Delta)}
  Given a finite-dimensional $k$-algebra $\Lambda$, a \emph{(finite) homological system $({\cal P},\leq,\{\Delta_u\}_{u\in {\cal P}})$ for $\Lambda$} consists of
  a finite preordered set $({\cal P},\leq)$ and
  a family of pairwise non-isomorphic indecomposable finite-dimensional $\Lambda$-modules $\{\Delta_u\}_{u\in {\cal P}}$ satisfying the following two conditions:
  \begin{enumerate}
  \item $\Hom_\Lambda(\Delta_u,\Delta_v)\not=0$ implies $u\leq v$;
  \item $\Ext^1_\Lambda(\Delta_u,\Delta_v)\not=0$ implies $u\leq v$ and $u\not\sim v$.
  \end{enumerate}

  Given such a homological system, we set   $\Delta:=\{\Delta_u\mid u\in {\cal P}\}$ and denote by $\widetilde{\cal F}(\Delta)$  the full subcategory of $\Lambda\g\Mod$ consisting of the trivial module $0$ and all those $M\in \Lambda\g\Mod$ which admit a \emph{$\Delta$-filtration},  that is a filtration of submodules
$$0=M_t\subseteq M_{t-1}\subseteq \cdots\subseteq M_1\subseteq M_0=M$$
such that $M_j/M_{j+1}$ is isomorphic to a (possibly infinite) direct sum of modules in $\Delta$, for each $j\in [0,t-1]$. We denote by ${\cal F}(\Delta)$ the full  subcategory of $\widetilde{\cal F}(\Delta)$ formed by its finite-dimensional objects. 

 A finite homological system ${\cal H}=({\cal P},\leq,\{\Delta_u\}_{u\in {\cal P}})$ for a finite-dimensional $k$-algebra $\Lambda$ is called \emph{admissible} if $\Lambda\in {\cal F}(\Delta)$ and the number of isoclasses of indecomposable projective $\Lambda$-modules coincides with the cardinality of ${\cal P}$.
 In this case, the modules in  $\Delta$ are determined up to isomorphism and are known as \emph{the standard modules};  $\Lambda$ is called a \emph{pre-standardly stratified algebra}, see \cite[(2.11)]{hsb}.

A pre-standardly stratified algebra $\Lambda$, with preordered index set $({\cal P},\leq)$, is called \emph{standardly stratified} if $({\cal P},\leq)$ is a  partial order. A standardly stratified algebra  $\Lambda$, equipped with the poset $({\cal P},\leq)$, is \emph{quasi-hereditary} iff  $\End_\Lambda(\Delta_u)\cong k$, for each $u\in {\cal P}$.
  \end{definition}

\begin{definition}\label{D: generico para subcat cal(C) de Lambda-mod}
Let $({\cal P},\leq,\{\Delta_u\}_{u\in {\cal P}})$ be a homological system for a finite-dimensional algebra $\Lambda$.
 Then, 
 \begin{enumerate}
  \item A $\Lambda$-module $G$ is called a \emph{generic module for ${\cal F}(\Delta)$} iff  $G$ is  a generic $\Lambda$-module with $G\in \widetilde{\cal F}(\Delta)$. 
  \item The category ${\cal F}(\Delta)$ is called \emph{generically tame} iff, for each $d\in \hueca{N}$, there are only finitely many non-isomorphic generic modules for ${\cal F}(\Delta)$ with endolength $d$.
 \end{enumerate}
\end{definition}

In 2021, we uploaded to arXiv a preprint with the results for ${\cal F}(\Delta)$ of this article, in  the particular case of the homological system of standard modules over a quasi-hereditary algebra.
 This article is a revised version of that one, where some of the arguments are simplified and the results are  generalized  for
arbitrary homological systems over finite-dimensional algebras.

This article can be seen as a continuation of \cite{bpsqh}. There we prove a tame-wild  dichotomy theorem for ${\cal F}(\Delta)$, namely: ${\cal F}(\Delta)$ is tame or wild, but not both, where the notions of tameness and wildness for ${\cal F}(\Delta)$ are made explicit in \cite[(1.2)]{bpsqh}. 
The main results of this article are the following two statements, where by a \emph{ rational algebra} we mean any $k$-algebra  of the form $k[x]_f$, for some $f\in k[x]\setminus{\{0\}}$.

\begin{theorem}\label{T:propiedades de gens de F(Delta)} Assume that  $\Lambda$ is a finite-dimensional algebra and consider any homological system 
 $({\cal P},\leq,\{\Delta_u\}_{u\in {\cal P}})$  for  $\Lambda$. Then, if  ${\cal F}(\Delta)$ is not wild and  $G$ is a generic $\Lambda$-module for  ${\cal F}(\Delta)$, the following statements hold:
\begin{enumerate}
 \item There is a rational algebra $\Gamma_G$ and a $\Lambda\g \Gamma_G$-bimodule $Z_G$ such that as a right $\Gamma_G$-module $Z_G$ is free of rank equal to the endolength of $G$. If $Q_G$ is the field of fractions of $\Gamma_G$, then $G\cong Z_G\otimes_{\Gamma_G}Q_G$.
 \item The functor $Z_G\otimes_{\Gamma_G}-:\Gamma_G\g\Mod\rightmap{}\widetilde{\cal F}(\Delta)$ preserves isoclasses and indecomposability.
 \item  The functor $Z_G\otimes_{\Gamma_G}-:\Gamma_G\g\mod\rightmap{}{\cal F}(\Delta)$ preserves Auslander-Reiten sequences. 
\end{enumerate}
\end{theorem}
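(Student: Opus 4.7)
The overall plan is to translate the problem into the language of ditalgebras and exploit the reduction machinery developed by the authors in their previous work. For a quasi-hereditary algebra $\Lambda$ there is an associated Drozd-type ditalgebra ${\cal A}$ with a full, faithful and exact functor $F:{\cal A}\g\Mod\rightmap{}\widetilde{\cal F}(\Delta)$, which restricts to an equivalence of ${\cal A}\g\mod$ onto ${\cal F}(\Delta)$ and preserves endolength. Under this equivalence the generic module $G$ for ${\cal F}(\Delta)$ corresponds to a generic ${\cal A}$-module $\widetilde G$, and the hypothesis that ${\cal F}(\Delta)$ is not wild translates into non-wildness of ${\cal A}$. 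The key tool is then the tame-wild dichotomy for ditalgebras together with the fact that, along every reduction step $({\cal A},W)\rightmap{}({\cal A}',W')$, generic modules descend to generic modules of the same endolength.

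For statement (1), apply an appropriate finite sequence of reduction functors to ${\cal A}$, each one preserving $\widetilde G$. Because ${\cal A}$ is not wild, this process must terminate at a ditalgebra ${\cal A}'$ whose layer supports a rational $k$-algebra $\Gamma_G$ and a distinguished bimodule $Z'$ such that $\widetilde G$ corresponds to the module $Z'\otimes_{\Gamma_G}Q_G$, where $Q_G$ is the field of fractions of $\Gamma_G$. Setting $Z_G:=F(Z')$, pulled back through the composition of reductions, produces the required $\Lambda\g\Gamma_G$-bimodule: freeness of rank equal to $\Endol{G}$ on the right follows because each reduction functor is obtained from a bimodule which is finitely generated projective over the coefficient algebra, and composition preserves this, while the rank equals the endolength by tracking how endolength transforms under reduction. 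The isomorphism $G\cong Z_G\otimes_{\Gamma_G}Q_G$ follows by applying $F$ to the corresponding identity for $\widetilde G$.

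For statement (2), the functor $Z_G\otimes_{\Gamma_G}-$ coincides, up to natural isomorphism, with the composition of $F$ with the inverses of the reduction functors restricted to the image of the rational algebra. Since each reduction functor preserves isoclasses and indecomposability on its essential image (a standard property of these functors proved in the ditalgebra framework), and since $F$ is full and faithful, the composition also does. Statement (3) then reduces to the same transport principle: in $\Gamma_G\g\mod$ almost split sequences exist because $\Gamma_G$ is a finite-dimensional $k$-algebra, and each reduction functor, being full and faithful and preserving the relevant subcategories, takes almost split sequences to almost split sequences. Composing with $F$, which sends almost split sequences in ${\cal A}\g\mod$ to almost split sequences in ${\cal F}(\Delta)$, yields the statement.

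The principal obstacle will be statement (3). Almost split sequences in ${\cal F}(\Delta)$ need not coincide with those in $\Lambda\g\mod$, so one cannot argue purely in $\Lambda\g\mod$; the argument has to be carried out inside ${\cal F}(\Delta)$ via the equivalence with ${\cal A}\g\mod$ and must keep careful track of how the non-splitting property and the lifting property of almost split morphisms behave under each reduction step. Showing that the tensor functor both sends non-split short exact sequences to non-split ones and transports the universal lifting property against non-retractions demands a detailed verification of the interaction between reduction functors and radical maps in ${\cal F}(\Delta)$; this is the technically hard part and will occupy most of the proof.
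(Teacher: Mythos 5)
Your overall strategy for parts (1) and (2) matches the paper's: pass to the ditalgebra $\underline{\cal A}(\Delta)$, use non-wildness to reduce to a minimal ditalgebra ${\cal B}$, realize $G$ as $T\otimes_B Q_i$ for a rational point $e_i$ of ${\cal B}$, and pull the bimodule back through the composition $\Omega H F L_{\cal B}$. One point you gloss over in (1): the rational algebra $\Gamma_G$ is not simply ``supported on the layer'' of the terminal ditalgebra; the paper must localize $Be_i$ at a suitable $g$ so that $Z_G:=Te_i\otimes_{Be_i}(Be_i)_g$ becomes \emph{free} (not just finitely generated) over $\Gamma_G=(Be_i)_g$, and the rank computation uses the identity $\Endol{G}=\dim_{k(x)}G$ established separately in (\ref{T: gens de F(Delta)})(4). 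This localization is not cosmetic --- it is reused in an essential way in part (3).

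The genuine gap is in part (3). You assert that ``each reduction functor, being full and faithful and preserving the relevant subcategories, takes almost split sequences to almost split sequences.'' This is false as a general principle: a full, faithful, exact functor between exact categories with almost split conflations need not preserve them, because the right-hand term $F(M)$ of the image sequence may have a translate $\tau F(M)$ not isomorphic to $F(\tau M)$. The paper's actual mechanism is: (a) establish that $\underline{\cal A}(\Delta)\g\mod$ has almost split conflations (\ref{T: A-mod tiene sqcsd}); (b) invoke the non-wildness hypothesis again, via (\ref{T: casi todo M en A-mod tq tau(M) cong M}), to conclude that almost all indecomposables of bounded dimension are $\tau$-invariant; (c) observe that only finitely many of the modules $E_\lambda=\Gamma_G/\langle x-\lambda\rangle$ can have $FL_{\cal B}EF_\phi(E_\lambda)\not\cong\tau FL_{\cal B}EF_\phi(E_\lambda)$, and shrink $\Gamma_G$ by a further localization at $(x-\lambda_1)\cdots(x-\lambda_t)$ to discard them; and only then (d) apply the criterion of \cite{BSZ}(32.7), which requires exactly this $\tau$-compatibility, to conclude that the composite preserves almost split conflations. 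You correctly sense that (3) is the hard part, but the sketch you give --- transporting the lifting property through ``full and faithful'' functors --- would not close it; the missing ideas are the $\tau$-periodicity theorem for non-wild ditalgebras and the resulting adjustment of $\Gamma_G$ and $Z_G$.
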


In the following, when we say that \emph{almost all objects in a class ${\cal C}$ of modules satisfy some property}, we mean that every object in this class has this property, with the exception of those lying in a finite union of isoclasses in ${\cal C}$. 

\begin{theorem}\label{T: main thm} Assume that  $\Lambda$ is a finite-dimensional algebra and 
consider any homological system $({\cal P},\leq,\{\Delta_u\}_{u\in {\cal P}})$   for  $\Lambda$.  Then, ${\cal F}(\Delta)$ is generically tame if and only if  it is tame.

Moreover, if ${\cal F}(\Delta)$ is generically tame, for any $d\in \hueca{N}$,  
almost every indecomposable $\Lambda$-module $M\in {\cal F}(\Delta)$ with  $\dim_kM\leq d$ is of the form $M\cong Z_G\otimes_{\Gamma_G}N$, for some generic $\Lambda$-module  $G$ for ${\cal F}(\Delta)$ with $\Endol{G} \leq d$ and some indecomposable $N\in \Gamma_G\g\mod$, with the notation of (\ref{T:propiedades de gens de F(Delta)}).
\end{theorem}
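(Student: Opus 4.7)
My plan is to transport the problem to a ditalgebra ${\cal A}$ whose category of finite-dimensional modules is equivalent to ${\cal F}(\Delta)$, so that generic modules for ${\cal F}(\Delta)$ correspond to pregeneric ${\cal A}$-modules and the notions of tameness and generic tameness match on both sides. Such ditalgebra models form the technical backbone of the authors' series, and once the translation is in place the statement becomes a version of Crawley-Boevey's characterization of tame algebras via generic modules, adapted to layered ditalgebras through the reduction functor machinery.

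For the direction \emph{tame implies generically tame}, I would fix $d\in\hueca{N}$ and let $G$ be a generic module for ${\cal F}(\Delta)$ with $\Endol{G}=d$. By part (1) of Theorem \ref{T:propiedades de gens de F(Delta)} the bimodule $Z_G$ is $\Gamma_G$-free of rank $d$, so for each closed point ${\mathfrak m}$ of $\Gamma_G$ the specialization $Z_G\otimes_{\Gamma_G}(\Gamma_G/{\mathfrak m})$ is a $\Lambda$-module in ${\cal F}(\Delta)$ of $k$-dimension $d$, indecomposable and varying non-trivially with ${\mathfrak m}$ by part (2). Each generic module of endolength $d$ thus yields a genuine one-parameter family of $d$-dimensional indecomposables of ${\cal F}(\Delta)$, and non-isomorphic generics give essentially disjoint families. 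Tameness of ${\cal F}(\Delta)$ bounds the number of one-parameter families in dimension $d$, forcing only finitely many generics of endolength $d$.

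For the harder direction \emph{generically tame implies tame}, together with the \emph{moreover} clause, I would first use the tame/wild dichotomy (in its ditalgebra form) to observe that a wild ${\cal F}(\Delta)$ would support infinitely many generic modules of some fixed endolength, whence generic tameness excludes wildness and Theorem \ref{T:propiedades de gens de F(Delta)} applies to every generic module. Fix $d\in\hueca{N}$ and list the finitely many generics $G_1,\dots,G_s$ for ${\cal F}(\Delta)$ with $\Endol{G_i}\leq d$. Theorem \ref{T:propiedades de gens de F(Delta)} supplies, for each $i$, an isoclass- and indecomposability-preserving functor
$$F_i:=Z_{G_i}\otimes_{\Gamma_{G_i}}-\colon \Gamma_{G_i}\g\mod\longrightarrow {\cal F}(\Delta),$$
satisfying $\dim_kF_i(N)=\Endol{G_i}\cdot\dim_kN$. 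Since each $\Gamma_{G_i}$ is a rational algebra, restricting the $F_i$ to modules $N$ whose image has $\Lambda$-dimension at most $d$ produces a finite collection of one-parameter families in ${\cal F}(\Delta)$. The crux is to show that with only finitely many exceptions every indecomposable $M\in {\cal F}(\Delta)$ with $\dim_kM\leq d$ lies in some $F_i(\Gamma_{G_i}\g\mod)$; this yields at once the tameness of ${\cal F}(\Delta)$ and the precise form asserted in the \emph{moreover} clause. I would obtain the coverage by iterating the reduction functors on ${\cal A}$ to bring the classification of ${\cal F}(\Delta)$-modules of dimension at most $d$ to a ``minimal'' ditalgebra, and then argue that if infinitely many indecomposables escaped $\bigcup_iF_i(\Gamma_{G_i}\g\mod)$, a limit/compactness argument would produce a generic module of endolength at most $d$ that is not on the list, a contradiction.

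The principal obstacle is this coverage step: proving that under generic tameness the finitely many one-parameter families supplied by $F_1,\dots,F_s$ exhaust all but finitely many isoclasses of indecomposables of ${\cal F}(\Delta)$ of dimension at most $d$. This demands a careful interplay between the ditalgebra reduction theory developed by the authors, the correspondence between generic modules for ${\cal F}(\Delta)$ and pregeneric modules for ${\cal A}$, and Crawley-Boevey's original argument for finite-dimensional algebras; once this step is secured, everything else is a bookkeeping of dimensions and isoclasses.
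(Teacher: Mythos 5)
Your overall architecture (transport to the ditalgebra $\underline{\cal A}(\Delta)$, reduction functors, wildness producing infinitely many generics of bounded endolength) matches the paper's, but the step you yourself flag as the crux --- coverage of almost all indecomposables of dimension $\leq d$ by the families $Z_{G_i}\otimes_{\Gamma_{G_i}}-$ --- is left unproven, and the method you propose for it (a ``limit/compactness argument'' producing a new generic module by contradiction) is not what is needed and is not developed. The paper's mechanism is direct and constructive: the reduction to a minimal ditalgebra ${\cal B}$ in (\ref{T: parametrización de módulos de dim acotada}), transported to $\widetilde{\cal F}(\Delta)$ in (\ref{T: pregens de underline(A)}) and (\ref{T: gens de F(Delta)}), shows that a \emph{single} functor $T\otimes_B-$ simultaneously (i) produces every generic module of endolength $\leq d$ as $T\otimes_BQ_i$ for the finitely many rational points $e_i$ of ${\cal B}$, and (ii) realizes every indecomposable $M\in{\cal F}(\Delta)$ with $\dim_kM\leq d$ as $T\otimes_BN$ for an indecomposable $Be_i$-module $N$. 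The bimodules $Z_{G_i}=Te_i\otimes_{Be_i}(Be_i)_g$ are then obtained by localizing $Be_i$, and the localization misses only finitely many isoclasses of indecomposable $Be_i$-modules of bounded dimension; that is the entire content of the ``almost all'' clause (see \ref{P: F(Delta) not wild --> F(Delta) gen tame}). In other words, the generics and the parametrizing families come from the same reduction, so there is nothing left to cover and no contradiction argument to run. Without this identification your plan does not close.

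Two further discrepancies are worth noting. First, for ``tame $\Rightarrow$ generically tame'' you argue via specializations $Z_G\otimes_{\Gamma_G}(\Gamma_G/\mathfrak{m})$ and assert that non-isomorphic generics give ``essentially disjoint'' one-parameter families; that disjointness is a genuine claim (it is the delicate point in Crawley-Boevey's original argument) and you give no justification. The paper avoids it entirely: tame implies not wild by the tame/wild theorem of \cite{bpsqh}, hence $\underline{\cal A}$ is not wild, hence pregenerically tame by (\ref{T: cal(A) pregen tame sii tame}), and the finitely many pregeneric $\underline{\cal A}$-modules of bounded endolength yield all generics for ${\cal F}(\Delta)$ via $\Omega H$. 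Second, you propose to deduce tameness of ${\cal F}(\Delta)$ from the coverage; the paper instead gets tameness for free from ``not wild'' via the already established tame/wild dichotomy for ${\cal F}(\Delta)$, and proves the coverage statement separately as the ``moreover'' clause. Neither discrepancy is fatal in principle, but as written both leave real gaps.
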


The proofs of the preceding results rely, in case the given homological system is admissible, on the construction of an interlaced  weak ditalgebra $\underline{\cal A}(\Delta)$ associated to the
category ${\cal F}(\Delta)$, in such a way that ${\cal F}(\Delta)$ is generically tame iff $\underline{\cal A}(\Delta)$ is pregenerically tame in the  sense of the following definition. The construction of $\underline{\cal A}(\Delta)$ in the case of quasi-hereditary algebras is due to Koenig, K\"ulshammer, and
Ovsienko, see \cite{KKO}. The construction for a general pre-standardly stratified algebra can be found in \cite{hsb}. The proofs of our main results in the case of a general homological system are reduced to the admissible case using a construction due to Mendoza, S\'aenz, and Xi, which associates to a general homological system $({\cal P},\leq,\{\Theta_v\}_{v\in {\cal P}})$ a pre-standardly stratified algebra with an admissible homological system $({\cal P},\leq,\{\Delta_v\}_{v\in {\cal P}})$ such that ${\cal F}(\Theta)$ and ${\cal F}(\Delta)$ are equivalent as exact categories, see \cite{MSX} and \cite[\S11]{bpsqh}.

We will assume some familiarity with the language introduced
in the first sections of \cite{bpsqh}. So $\underline{\cal A}\g\Mod$
(resp. $\underline{\cal A}\g\mod$) denotes the category of modules
(resp. finite-dimensional modules) over an  interlaced weak ditalgebra $\underline{\cal
A}$ and, given $M\in \underline{\cal A}\g\Mod$, we denote by
$\End_{\underline{\cal A}}(M)$ its endomorphism algebra in $\underline{\cal
A}\g\Mod$.

\begin{definition}\label{D: endol y genericos}
Let $\underline{\cal A}=({\cal A},I)$ be an interlaced weak ditalgebra, with layer $(R,W)$, see \cite[(2.5) and (4.1)]{bpsqh}.  Given $M\in \underline{\cal A}
\g\Mod$, denote by $E_M:=\End_{\underline{\cal A}}(M)^{op}$ the opposite of its endomorphism algebra. Then, $M$ admits a structure of $R\g E_M$-bimodule, where $m\cdot (f^0,f^1)=f^0(m)$, for $m\in M$ and $(f^0,f^1)\in E_M$. By definition, the \emph{endolength} of $M$, denoted by $\Endol{M}$, is the length of $M$ as a right $E_M$-module.

A module $M\in \underline{\cal A}\g\Mod$ is called \emph{pregeneric} iff $M$ is indecomposable, with finite endolength but with infinite dimension over the ground field $k$. The interlaced weak ditalgebra $\underline{\cal A}$ is called \emph{pregenerically tame} iff, for each natural number $d$, there are only finitely many isoclasses of pregeneric $\underline{\cal A}$-modules of endolength $d$.
\end{definition}

The preceding notion extends the one introduced in \cite[\S2]{BPS2} for layered ditalgebras. As remarked there, 
if $B$ is any $k$-algebra, we have the corresponding regular ditalgebra ${\cal A}$ with layer $(B,0)$ and we can identify canonically the categories ${\cal A}\g\Mod$ with $B\g\Mod$. Thus, if $B$ is a finite-dimensional algebra, the notions of \emph{pregeneric module} and \emph{pregeneric tameness} for the algebra $B$ coincide with the usual notions of \emph{generic module} and \emph{generic tameness}. 

The interlaced weak ditalgebra $\underline{\cal A}(\Delta)$ associated to the category ${\cal F}(\Delta)$, mentioned before, is a ${\cal P}$-oriented interlaced  weak ditalgebra, which means that its biquiver, see \cite[(1.5)]{bpsqh}, has the special geometrical characteristics related to the preordered set ${\cal P}$ explained below.

\begin{definition}\label{D: biquiver P-orientado}
Let ${\cal P}=({\cal P},\leq)$ be a finite preordered set and $\hueca{B}$ a biquiver with set of points ${\cal P}$, then we say that $\hueca{B}$ is \emph{${\cal P}$-oriented} iff
 \begin{enumerate}
  \item  Whenever there is a dashed arrow from $u$ to $v$, we have $u\leq v$;
  \item Whenever there is a solid arrow from $u$ to $v$, we have
  $\overline{u}<\overline{v}$, .
 \end{enumerate}
 A weak ditalgebra ${\cal A}=(T,\delta)$ will be called a \emph{${\cal P}$-oriented weak ditalgebra} iff its underlying tensor algebra $T$ is the tensor algebra of a ${\cal P}$-oriented biquiver. An interlaced weak ditalgebra $\underline{\cal A}=({\cal A}, I)$ is called \emph{${\cal P}$-oriented}  iff ${\cal A}$ is ${\cal P}$-oriented. 
 \end{definition}

A  biquiver $\hueca{B}$ is called \emph{directed} iff it admits no oriented
(non-trivial) cycle (composed by any kind of arrows). In a ${\cal P}$-oriented biquiver $\hueca{B}$, the only (non-trivial) oriented cycles (composed of any kind of arrows) consist of dashed arrows between points in the same $\sim$ class. Such a biquiver $\hueca{B}$ is not necessarily directed.

The proof of the main theorem (\ref{T: main thm}) relies on the following result. 

\begin{theorem}\label{T: cal(A) pregen tame sii tame} Let ${\cal P}$ be a finite preordered set. Suppose that $\underline{\cal A}=({\cal A},I)$ is a ${\cal P}$-oriented triangular interlaced weak ditalgebra, where $I$ is an ideal of $A$ contained in the radical of $A$. Then,  $\underline{\cal A}$ is pregenerically tame iff it is tame.
\end{theorem}

The proof of this last statement will follow the line of reasoning of \cite{bpsqh}, adapting the arguments to handle pregeneric modules. This strategy is similar to the one used by Crawley-Boevey to transit from \cite{CB1} to \cite{CB2}. In order to present a readable argument for the proof of our main results, we could not avoid to recall many definitions and constructions from \cite{CB2} and \cite{bpsqh}. Section 2 includes a correction to \cite{bpsqh}, which required an adjustement of the definition of triangularity for an interlaced weak ditalgebra. This correction does not modify the main arguments and results of \cite{bpsqh}.

\section{Triangularity and a correction to \cite{bpsqh}}  \label{S: endogen}

In the following lemmas, we recall  elementary reduction procedures on triangular interlaced weak ditalgebras studied in \cite[\S6]{bpsqh}, and we describe their effect on the endolength. 

Here, as in \cite{BPS2}, given a layered interlaced weak ditalgebra $\underline{\cal A}=({\cal A},I)$ with underlying weak ditalgebra ${\cal A}=(T,\delta)$ and layer $(R,W)$, we denote by $A=[T]_0=T_R(W_0)$ the subalgebra of $T=T_R(W)$ formed by the zero-degree elements and, by $V=[T]_1=AW_1A$ the $A$-$A$-subbimodule of $T$ formed by the  elements with degree one. Given an ideal $I$ of $A$, we will use the notation $\langle I\rangle_2:=IV^2+VIV+V^2I$ and $\langle I\rangle_3:=IV^3+VIV^2+V^2IV+V^3I$. 

We take this opportunity to correct an error of \cite{bpsqh} in the construction of the reduction by regularization, which requires an adjustment in the definition of a triangular interlaced weak ditalgebra given there. The error is that \cite[(6.1)]{bpsqh} was applied for this construction, but 
 the assumption on the commutativity of the diagrams does not hold in the regularization case. 
 The correct construction, which is almost the same as the one given there, is triangular in the following sense.

\begin{definition}\label{D: triangular interlaced weak ditalg}
Let $({\cal A},I)$ be an interlaced  weak ditalgebra, with underlying weak ditalgebra  
${\cal A}=(T,\delta)$. We say that a layer $(R,W)$ of ${\cal A}$ is a \emph{triangular layer for} $({\cal A},I)$ if
\begin{enumerate}
 \item There is a filtration of $R$-$R$-subbimodules
$0=W_0^0\subseteq W_0^1\subseteq \cdots \subseteq W_0^r=W_0$ such that
$\delta(W_0^{i+1})\subseteq A_iW_1A_i$, for all $i\in [0,r-1]$,
where $A_i$ denotes the subalgebra of $A$ generated by $R$ and $W_0^i$.
\item There is a filtration of $R$-$R$-subbimodules
$0=W_1^0\subseteq W_1^1\subseteq \cdots \subseteq W_1^s=W_1$ such that
$\delta(W_1^{i+1})\subseteq AW_1^iAW_1^iA+\langle I\rangle_2$, for all $i\in [0,s-1]$.
\end{enumerate}
 We say that $({\cal A},I)$ is a \emph{triangular interlaced  weak ditalgebra} whenever  $I$ is an ${\cal A}$-triangular ideal of $A$, see \cite[(3.2)]{bpsqh}, and $({\cal A},I)$ admits a triangular layer.
\end{definition}

We stress the fact that the term triangular interlaced weak ditalgebra is used in this  whole paper in the sense of (\ref{D: triangular interlaced weak ditalg}). When the ideal $I$ is zero, we recover the usual definition of triangularity for ditalgebras, see \cite{BSZ}. 

\begin{remark}\label{R: adjust 1-triangularty} With the preceding notation, given $M,N,L\in ({\cal A},I)\g\Mod$, if $f^1 \in \Hom_{A\g A}(V, \Hom_k(M,N))$,  $g^1 \in \Hom_{A\g A}(V, \Hom_k(N, L))$, and 
 $z \in \langle I\rangle_2$, then 
$m((g^1 \otimes f^1)(z)) = 0$, where $m$ denotes composition. Hence, all the proofs 
 concerning splitting of idempotents in $({\cal A},I)\g\Mod$, nilpotency of endomorphisms of the form $(f^0,f^1)$ with $f^0$ nilpotent, Roiter's condition and its preservation under reduction procedures hold unchanged  with this adjusted definition of triangularity. The changes discussed  have no impact on the results obtained in \cite{bpsqh}, which remain well established as they were stated there, with the new definition of triangular interlaced weak  ditalgebra given above. Notice that the reduction functor $F^r$  associated with the regularization used here is the same as the one used in \cite{bpsqh}, the only difference is that the last one is wrongly presented in \cite{bpsqh} as a restriction functor associated to a morphism of interlaced weak ditalgebras.    
 
 Notice also that the notion of triangularity defined in  \cite{hsb} for an  interlaced weak ditalgebra is consistent with all the content of that article, because the  interlaced weak ditalgebra studied there is triangular in the precise sense specified there. So the  correction we present here has no impact at all on \cite{hsb}. Moreover, the results presented in \cite{hsb} for triangular interlaced weak ditalgebras in sections 11 and 12 hold for triangular interlaced weak ditalgebras as defined in  (\ref{D: triangular interlaced weak ditalg}). 
\end{remark}

In the following, we present a simplified approach to the construction of the basic reductions which arise from certain surjective morphisms (including regularization). Since we want a unified approach to the construction of these basic reductions, we need to replace \cite[(6.1)]{bpsqh} by the next proposition. 
Here, we verify the fact that the reduced interlaced weak ditalgebra $({\cal A}',I')$ is triangular,  according to definition (\ref{D: triangular interlaced weak ditalg}), whenever $({\cal A},I)$ is so.

\begin{proposition}\label{P: la nueva} Let $\underline{\cal A}=({\cal A},I)$ be a triangular interlaced weak  ditalgebra with layer $(R,W)$. Assume that we have a surjective morphism of $k$-algebras  $\phi_\flat:R\rightmap{}R'$, $R'$-$R'$-bimodules $W'_0$ and $W'_1$, and retractions of $R\g R$-bimodules $\phi_0:W_0\rightmap{}W'_0$ and $\phi_1:W_1\rightmap{}W'_1$. Consider the tensor algebras $T=T_R(W_0\oplus W_1)$ and $T'=T_{R'}(W'_0\oplus W'_1)$ and the 
 surjective morphism of graded $k$-algebras $\phi:T\rightmap{}T'$ induced by the maps $\phi_\flat$,  $\phi_0$, and $\phi_1$. Assume furthermore that 
 $$\delta(\Ker\phi_0)\subseteq \Ker\phi \hbox{ \  and \  }
 \delta(\Ker \phi_1)\subseteq \Ker\phi+\langle I\rangle_2 
 $$
 where $\delta$ is the derivation of ${\cal A}$. Consider a right inverse $\sigma:W'\rightmap{}W$ for the retraction $\phi_0\oplus \phi_1$ and the derivation $\delta':T'\rightmap{}T'$ with $\delta'(R')=0$  induced by the morphism 
 $\phi\delta\sigma:W'\rightmap{}T'$.  Then, we have a triangular interlaced weak ditalgebra 
 $\underline{\cal A}':=({\cal A}',I')$, where ${\cal A}'=(T',\delta')$ and $I'=\phi(I)$, and a  faithful functor 
 $$F':\underline{\cal A}'\g\Mod\rightmap{}\underline{\cal A}\g\Mod.$$
 Consider the kernels $K_0:=\Ker\phi_{\vert A}$ and $K_1:=\Ker \phi_{\vert V}$. Then, 
the image of $F'$ consists of the $\underline{\cal A}$-modules annihilated by $K_0$, and the functor $F'$ is full whenever $K_1=K_0V+VK_0+\delta(K_0)$. If this is the case, the functor $F'$  preserves endolength. So, 
$\underline{\cal A}'$ is pregenerically tame whenever $\underline{\cal A}$ is so. 
\end{proposition}

\begin{proof} The map $\phi\delta-\delta'\phi$ may not be zero but, since $\delta(\Ker \phi_0)\subseteq \Ker \phi$, it satisfies 
$$(\phi\delta -\delta'\phi)_{\vert W_0}=0$$
and, since $\delta(\Ker \phi_1)\subseteq \Ker\phi+\langle I\rangle_2$, we get 
$$(\phi\delta-\delta'\phi)(W_1)\subseteq \phi(\Ker \phi +\langle I\rangle_2)=\langle \phi(I)\rangle_2.$$
From the first equality, we derive that $\phi\delta(a)=\delta'\phi(a)$, for all $a\in A=T_R(W_0)$. From both of them, we derive that 
$$(\phi\delta-\delta'\phi)(V)\subseteq \langle \phi(I)\rangle_2\hbox{ \ and \ }(\phi\delta-\delta'\phi)(V^2)\subseteq \langle \phi(I)\rangle_3.$$ 
We have the weak ditalgebra ${\cal A}'=(T',\delta')$ with layer $(R',W')$, we claim that  $({\cal A}',I')$ is an interlaced weak ditalgebra where $I'=\phi(I)$. Indeed, we have 
$$(\delta')^2(A')=\delta'\delta'\phi(A)
=
\delta'\phi\delta(A)\subseteq 
\phi\delta^2(A)+\langle\phi(I)\rangle_2
\subseteq \langle\phi(I)\rangle_2$$
and, since $\delta(I)\subseteq IV+VI$,   so $\delta'(\langle \phi(I)\rangle_2)\subseteq \langle \phi(I)\rangle_3$, we obtain  
$$(\delta')^2(V')=\delta'\delta'\phi(V)
\subseteq 
\delta'\phi\delta(V)+\delta'(\langle\phi(I)\rangle_2)\subseteq 
\phi\delta^2(V)+\langle\phi(I)\rangle_3
\subseteq \langle\phi(I)\rangle_3.$$
If we adopt the notation of (\ref{D: triangular interlaced weak ditalg}), we have  the filtration 
$$0=\phi(W_0^0)\subseteq \phi(W_0^1)\subseteq \cdots \subseteq \phi(W_0^r)=\phi(W_0)=W'_0$$  such that
$\delta'(\phi(W_0^{i+1}))=\phi(\delta(W_0^{i+1}))\subseteq A'_i\phi(W_1)A'_i$, for all $i\in [0,r-1]$. We can also consider the filtration 
$$0=\phi(W_1^0)\subseteq \phi(W_1^1)\subseteq \cdots \subseteq \phi(W_1^s)=\phi(W_1)=W'_1.$$ 
 For  $i\in [0,s-1]$,  we have 
$$\delta'\phi(W_1^{i+1})\subseteq \phi\delta(W_1^{i+1})+\langle \phi(I)\rangle_2\subseteq 
A'\phi(W_1^i)A'\phi(W_1^i)A'+\langle \phi(I) \rangle_2.$$
Assume that $0=H_0\subseteq H_1\subseteq \cdots\subseteq H_t=I$ is the triangular filtration of $I$, as in \cite[(3.2)]{bpsqh}. Since $(\delta'\phi-\phi\delta)_{\vert A}=0$, it is clear that the sequence of subspaces
 $0=\phi(H_0)\subseteq \phi(H_1)\subseteq\cdots\subseteq \phi(H_t)=\phi(I)$
 is such that 
 $\delta'(\phi(H_i))\subseteq A'\phi(H_{i-1})V'+V'\phi(H_{i-1})A'$,  for all $i\in [1,t].$ Thus, the ideal 
 $\phi(I)$ of $A'$ is ${\cal A}'$-triangular. 
 
 The functor $F'$ of the statement of this proposition maps any $A'$-module $M$ with $I'M=0$ onto the $A$-module $F'(M)$ with action obtained by restriction through the surjective morphism of algebras $\phi_A:=\phi_{\vert A}:A\rightmap{}A'$, which satisfies $IF'(M)=0$. It maps each $f=(f^0,f^1)\in \Hom_{({\cal A}',I')}(M,N)$ onto $F'(f)=(F'(f)^0,F'(f)^1)$ where $F'(f)^0=f^0$ and $F'(f)^1=f^1\phi_V$,  where $\phi_V:=\phi_{\vert V}:V\rightmap{}V'$. Since $\delta'\phi(a)=\phi\delta(a)$, for all $a\in A$, we have indeed that $F'(f)\in \Hom_{({\cal A},I)}(F'(M),F'(N))$.  
 
 Let us show that $F'$ preserves the  composition of morphisms.    Consider morphisms $f:M\rightmap{}N$ and $g:N\rightmap{}L$ in $({\cal A}',I')\g\Mod$. 
Then, for each $v\in V$, we have 
  $$\begin{matrix}
  [F'(g)F'(f)]^1(v) &=& 
g^0f^1(\phi(v))+g^1(\phi(v))f^0 +m(g^1\phi\otimes f^1\phi)\delta(v)\hfill\\
& =& g^0f^1(\phi(v))+g^1(\phi(v))f^0 +m(g^1 \otimes f^1)\phi(\delta(v))\hfill\\
&=& g^0f^1(\phi(v)) + g^1(\phi(v))f^0 + m(g^1 \otimes f^1)(\delta'\phi(v)) + m(g^1 \otimes f^1)(z)
\end{matrix}$$
where $z \in \langle \phi(I)\rangle_2$. As observed in (\ref{R: adjust 1-triangularty}), we have   $m(g^1 \otimes f^1)(z)=0$, thus  
$$[F'(g)F'(f)]^1(v) = g^0f^1(\phi(v)) + g^1(\phi(v))f^0 + m(g^1 \otimes f^1)(\delta'\phi(v)) = F'(gf)^1(v).$$
The functor $F'$ is faithful because $\phi_V:=\phi_{\vert V}:V\rightmap{}V'$ is surjective. 

Assuming that $K_1=K_0V+VK_0+\delta(K_0)$ we can show that the functor $F'$ is full with the same arguments used to prove \cite[(6.1)(6)]{bpsqh}. It is clear that, in this case, $F'$ preserves endolength. 
\end{proof}

\begin{remark}\label{R: basic properties for Fd, Fq, Fa} The statements (6.2), (6.4) and (6.5) of \cite{bpsqh} hold with the new definition of triangularity stated in (\ref{D: triangular interlaced weak ditalg}). Moreover, the associated functors $F^z:({\cal A}^z,I^z)\g\Mod\rightmap{}({\cal A},I)\g\Mod$ described there, for $z\in \{d,q,a\}$,  preserve endolength. This can be derived from an application of the last proposition
 to the   situations described in the three mentioned statements, where full and faithful functors preserving endolength are induced by a specified triple of surjective morphisms  
$(\phi_\flat,\phi_0,\phi_1)$.

For the regularization construction in \cite[(6.3)]{bpsqh}, the correct statement, which requires the new definition of triangularity, is the next one. It follows  from (\ref{P: la nueva}) and the proof of \cite[(6.3)]{bpsqh}. Notice that the explicit description of the functor $F^r:({\cal A}^r,I^r)\g\Mod\rightmap{}({\cal A},I)\g\Mod$ coincides with the one given in \cite[(6.3)]{bpsqh}. 
\end{remark}

\begin{proposition}[regularization]\label{P: regularization}
 Let $\underline{\cal A}=({\cal A},I)$ be a triangular interlaced weak ditalgebra with underlying weak ditalgebra ${\cal A}=(T,\delta)$ and 
layer $(R,W)$. 
Assume that we have $R$-$R$-bimodule decompositions $W_0=W'_0\oplus W''_0$ and 
$W_1=\delta(W'_0)\oplus W''_1$, set $W'':=W_0''\oplus W''_1$. Consider the
identity map 
$\phi_{\flat}:R\rightmap{}R$, the canonical projections
$\phi_j:W_j\rightmap{}W''_j$, for $j\in 
\{0,1\}$, and the tensor algebra
 $T^r=T_R(W'')$. So, we have a morphism of graded 
algebras $\phi:T\rightmap{}T^r$ 
and the ideal $I^r=\phi(I)$ of $A^r$. Then, there is a triangular 
interlaced weak ditalgebra 
$\underline{\cal A}^r=({\cal A}^r,I^r)$ with layer $(R^r,W^r)$, where $R^r=R$, $W_0^r=W''_0$,  
and $W^r_1=W''_1$.  The morphism of graded algebras $\phi$ induces a full and faithful functor 
$F^r:\underline{\cal 
A}^r\g\Mod\rightmap{}\underline{\cal A}\g\Mod$ which preserves  endolength. Moreover, 
if $\underline{\cal A}$ is a Roiter interlaced weak ditalgebra, as in \cite[(11.1)]{hsb},
then  $M\in \underline{\cal A}\g\Mod$ is isomorphic to an object in the image of 
$F^r$ iff $\Ker\,\delta\cap W'_0$ annihilates $M$. In particular, if 
this intersection is zero, $F^r$ is an equivalence of categories.
\end{proposition}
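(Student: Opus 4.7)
The plan is to prove this by combining the construction of the reduced ditalgebra as in \cite{bpsqh}(7.3) with the endolength tracking from the proof of \cite{BPS2}(2.2). Since the statement packages several distinct claims, I would handle them in order: existence of $\underline{\cal A}^r$, full-faithfulness of $F^r$, preservation of endolength (hence of pregeneric tameness), and the Roiter-case image description.

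First, I would verify that $\underline{\cal A}^r$ is a well-defined triangular interlaced weak ditalgebra. The differential $\delta^r$ on $T^r$ is forced by requiring $\phi\circ\delta=\delta^r\circ\phi$: on generators $w\in W''_0$, the image $\phi(\delta(w))$ lies in $W^r_1=W''_1$ because the decomposition $W_1=\delta(W'_0)\oplus W''_1$ makes $\phi$ kill precisely the $\delta(W'_0)$ summand, so $\delta^r(\phi(w))$ is well-defined. One then checks $\delta^r\circ\delta^r=0$, that triangularity of $\underline{\cal A}^r$ is inherited from $\underline{\cal A}$ by restricting the triangular filtration of $W$ to the sub-bimodule $W''$, and that $I^r:=\phi(I)$ satisfies the axioms required of an interlaced ideal.

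Second, I would establish that $F^r$ is full and faithful. Since $\phi$ is the identity on $R$ and surjects onto $T^r$, the restriction-of-scalars functor preserves underlying $R$-module structures; the $R$-$R$-bimodule maps $f^1:W^r_1\to\Hom_k(M,N)$ are in bijection with $R$-$R$-bimodule maps $W_1\to\Hom_k(M,N)$ which vanish on $\delta(W'_0)$, and one verifies that every such extension satisfies the compatibilities defining a morphism of $\underline{\cal A}$-modules. Preservation of endolength is then immediate: $F^r$ acts as the identity on the underlying vector space of a module and induces an isomorphism $E_M\cong E_{F^r(M)}$ under which the right-module structures correspond, so the lengths agree. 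Pregeneric tameness transfers from $\underline{\cal A}$ to $\underline{\cal A}^r$ because full-faithfulness and endolength preservation together inject isoclasses of pregeneric $\underline{\cal A}^r$-modules of a fixed endolength into those of $\underline{\cal A}$ of the same endolength.

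Third, for the Roiter statement, a module $M\in\underline{\cal A}\g\Mod$ lies in the image of $F^r$ exactly when the action of $W'_0$ on $M$ is trivial (since these are the elements killed by $\phi|_{W_0}$). Under the Roiter hypothesis, the module-structure equations let one recover the action of $w'\in W'_0$ from that of $\delta(w')\in V$; concretely, the action of $w'$ can be modified by a coboundary so that only its $\delta$-kernel part is a genuine obstruction. Hence the image consists precisely of modules annihilated by $\Ker\delta\cap W'_0$, and when this intersection vanishes the condition is empty, so $F^r$ is an equivalence. The main obstacle will be this last step: extracting the precise compatibility between the module-theoretic action of $W'_0$ and the differential in the Roiter setting is the technical heart of the result, and my plan is to invoke the detailed case-analysis of \cite{bpsqh}(7.3) rather than redo it by hand.
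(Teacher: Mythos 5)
Your proposal is correct and follows essentially the same route as the paper, which offers no independent proof of this proposition but simply recalls it from \cite{bpsqh}(7.3) for the construction and full-faithfulness of $F^r$, and from the proof of \cite{BPS2}(2.2) for the endolength statement; your sketch fleshes out exactly these points (in particular, the observation that $F_\phi$ is a restriction functor acting as the identity on underlying spaces, so full-faithfulness gives $E_M\cong E_{F^r(M)}$ compatibly with the module structures) and defers the Roiter-case image description to the same reference, as the paper does.
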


\begin{remark}\label{R: F  y F0} We will follow the notation and terminology presented in \cite{bpsqh}, for
the reductions using admissible modules, but we need to recall some basic facts. 

Let $\underline{\cal A}=({\cal A},I)$ be a triangular interlaced weak ditalgebra with underlying weak ditalgebra ${\cal A}=(T,\delta)$ and 
layer $(R,W)$. 
Suppose 
that there is an $R$-$R$-bimodule decomposition $W_0=W'_0\oplus W''_0$ with 
$\delta(W'_0)=0$ and  that $X$ is a triangular admissible 
$B$-module, where $B=T_R(W'_0)$ and $\End_B(X)^{op}=S\oplus P$ is the corresponding  splitting.   These are the ingredients required to construct the triangular interlaced weak ditalgebra 
$\underline{\cal A}^X$ and the associated functor $F^X:\underline{\cal A}^X\g\Mod\rightmap{}\underline{\cal A}\g\Mod$, see \cite[(6.6)\&(6.7)]{bpsqh} for detailed definitions, notation, and basic properties. The statements of \cite[(6.6)\&(6.7)]{bpsqh} hold with the new definition of triangularity (\ref{D: triangular interlaced weak ditalg}), so we will refer to them with this in mind. 

 Let us describe a couple of useful commutative diagrams which relate various reduction functors associated to the same $B$-module $X$.

(1): We  have the triangular interlaced weak ditalgebra $\underline{\cal A}_0=({\cal A}_0,I_0)$, where ${\cal A}_0=(B,0)$ and $I_0=B\cap I$. So we get the corresponding interlaced weak ditalgebra $\underline{\cal A}_0^X$, with underlying ditalgebra ${\cal A}^X_0=(T_S(P^*),\delta_0)$, where $\delta_0$ is the differential determined by the comultiplication $\mu:P^*\rightmap{}P^*\otimes_SP^*$. So, we also have  its associated functor $F^X_0:\underline{\cal A}^X_0\g\Mod\rightmap{}\underline{\cal A}_0\g\Mod$. The inclusions of interlaced weak ditalgebras $j:\underline{\cal A}_0\rightmap{}\underline{\cal A}$ and $j^X:\underline{\cal A}_0^X\rightmap{}\underline{\cal A}^X$ determine a commutative square
of functors
$$\begin{matrix}\underline{\cal A}^X\g\Mod&\rightmap{F^X}&\underline{\cal A}\g\Mod\\
\shortlmapdown{F_{j^X}}&&\shortrmapdown{F_j}\\
\underline{\cal A}^X_0\g\Mod&\rightmap{F^X_0}&\underline{\cal A}_0\g\Mod,\\
\end{matrix}$$
where $F_j$ and $F_{j^X}$ denote the restriction functors induced by $j$ and $j^X$, respectively. 
From \cite[(6.10)\&(6.11)]{bpsqh}, we know that,  
 for any complete admissible $B$-module $X$, the functors $F_0^X$ and $F^X$ 
are full and faithful. This applies in the context of \cite[(6.9)]{bpsqh}. 

(2): Since ${\cal A}_0=(B,0)$ is a triangular  ditalgebra (that is a triangular weak interlaced ditalgebra with zero ideal-component) and $X$ is an admissible $B$-module, we also have an associated functor, which we denote here by $\hat{F}_0^X:{\cal A}_0^X\g\Mod\rightmap{}{\cal A}_0\g\Mod$, see \cite[(12.10)]{BSZ}. These functors are extensively studied in \cite{BSZ}. We have the following commutative diagram 
  $$\begin{matrix}
 {\cal A}_0^X\g\Mod &\rightmap{\widehat{F}_0^X}&{\cal A}_0\g\Mod\\
 \shortrmapup{}&&\shortrmapup{}\\
 \underline{\cal A}_0^X\g\Mod &\rightmap{F_0^X}&\underline{\cal A}_0\g\Mod\\
 \end{matrix}$$
 where  the vertical arrows are inclusions.
\end{remark}

 For the precise definition of the notion of wildness for an interlaced weak ditalgebra, present in the next remark, see \cite[(6.12)]{bpsqh}. 

\begin{remark}\label{P: Reducciones vs wildness} From \cite[(6.14)]{bpsqh}, which holds with our new definition of triangularity (\ref{D: triangular interlaced weak ditalg}), we get the following. 
 Assume that the triangular interlaced weak ditalgebra $\underline{\cal A}^z$ is obtained from a Roiter  interlaced weak ditalgebra $\underline{\cal A}$ by some reduction procedure of type $z\in \{d,r,q,a,X\}$. Then, 
 $\underline{\cal A}^z$ is a Roiter interlaced weak ditalgebra, which is not wild whenever $\underline{\cal A}$ is not wild.   
 \end{remark}

\begin{proposition}\label{P: gen tame => non wild} Any pregenerically tame triangular interlaced weak ditalgebra $\underline{\cal A}$ is not wild.
\end{proposition}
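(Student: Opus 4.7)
The plan is to argue by contrapositive: assuming $\underline{\cal A}$ is wild, I exhibit infinitely many pairwise non-isomorphic pregeneric $\underline{\cal A}$-modules of bounded endolength; by pigeonhole some common endolength $d$ then admits infinitely many pregeneric isoclasses, contradicting pregeneric tameness. Pick an $A/I$-$k\langle x,y\rangle$-bimodule $Z$ producing the wildness of $\underline{\cal A}$ as in Definition \ref{D: wild weak dit}, of rank $n$ as free right $k\langle x,y\rangle$-module, and write $F:=L_{({\cal A},I)}\circ(Z\otimes_{k\langle x,y\rangle}-)$, which preserves isoclasses of indecomposables.

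First I supply infinitely many pregeneric $k\langle x,y\rangle$-modules of endolength one. For each $a\in k(t)$, let $M_a$ be $k(t)$ as a $k$-vector space with $x$ and $y$ acting by multiplication by $t$ and by $a$ respectively. Any endomorphism must commute with the $x$-action, hence is $k(t)$-linear, hence is multiplication by a scalar in $k(t)$ (which automatically commutes with the $y$-action); thus $\End_{k\langle x,y\rangle}(M_a)\cong k(t)$, so $M_a$ is indecomposable and $\Endol(M_a)=1$. The same computation shows $M_a\cong M_b$ forces $a=b$, so $\{M_a\}_{a\in k(t)}$ is an infinite family of pairwise non-isomorphic pregenerics of $k\langle x,y\rangle$.

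Next, I transport the family through $F$ and bound endolength. The modules $F(M_a)$ are pairwise non-isomorphic and indecomposable since $F$ preserves isoclasses of indecomposables, and have infinite $k$-dimension since $F(M_a)\cong M_a^{\oplus n}$ as $k$-spaces. For the endolength bound, note that $L_{({\cal A},I)}$ sends each $g\in\End_{k\langle x,y\rangle}(M_a)$ to $(1_Z\otimes g,0)\in\End_{\underline{\cal A}}(F(M_a))$, yielding a ring homomorphism $E_{M_a}\to E_{F(M_a)}$. Using the formula $m\cdot(f^0,f^1)=f^0(m)$ from Definition \ref{D: endol y genericos}, one checks that the induced right $E_{M_a}$-action on $F(M_a)$ via this map coincides with the diagonal $E_{M_a}$-action on $M_a^{\oplus n}$. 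Every $E_{F(M_a)}$-submodule is therefore an $E_{M_a}$-submodule, so
$$\Endol(F(M_a))\leq\ell_{E_{M_a}}(F(M_a))=n\cdot\Endol(M_a)=n.$$
Hence the $F(M_a)$ are pairwise non-isomorphic pregenerics of $\underline{\cal A}$ with endolength in $[1,n]$, and the pigeonhole argument concludes.

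The main delicate step is this endolength comparison: morphisms in $\underline{\cal A}\g\Mod$ are pairs $(f^0,f^1)$ and $E_{F(M_a)}$ could a priori be much richer than the image of $E_{M_a}$, so one must verify that the right $E_{M_a}$-action induced via $L$ agrees with the canonical action coming from $F(M_a)\cong M_a^{\oplus n}$. Once that compatibility is established, enlarging the acting ring can only shrink length, and the bound $\Endol(F(M_a))\leq n$ drops out.
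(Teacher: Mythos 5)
Your proof is correct. The paper's own proof is just the citation ``the proof of \cite{BPS2}(2.9) readily adapts,'' and your argument --- transporting an infinite family of endolength-one generic $k\langle x,y\rangle$-modules through the wildness functor and bounding endolength by restricting the action along $E_{M_a}\to E_{F(M_a)}$ --- is exactly the standard argument behind that citation (and the one the paper itself spells out in (\ref{P: F(Delta) wild --> F(Delta) not pregen tame})), so it matches the intended approach.
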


\begin{proof}  The proof of \cite[(2.9)]{BPS2} readily adapts to this situation. 
\end{proof}

\section{Bimodules and reduction functors}

We recall some basic notions of \cite{CB2}, adapted to the language used here.  

\begin{definition}\label{D: bimods} Given an interlaced weak ditalgebra $\underline{\cal A}=({\cal A},I)$ and   a $k$-algebra  $E$,
 an $\underline{\cal A}\g E$-{\sl bimodule} is an object $M\in\underline{\cal A}\g \Mod$, together
with a $k$-algebra morphism $\alpha_M:E\rightarrow \End_{\underline{\cal A}}(M)^{op}$. If $N$ is another $\underline{\cal A}$-$E$-bimodule, then
$$\Hom_{\underline{\cal A}\g E}(M,N)=\{f\in\Hom_{\underline{\cal A}}(M,N)\mid
f\alpha_M(e)=\alpha_{N}(e)f,\hbox{ for all }e\in E\}.$$
The category of $\underline{\cal A}$-$E$-bimodules, where the rule of composition is the same as for $\underline{\cal A}\g\Mod$, is denoted by $\underline{\cal A}\g E\g\Mod$. 

We shall denote by $\underline{A}$ the quotient algebra $A/I$, where $A$ is the subalgebra formed  by the degree zero elements in the underlying tensor algebra $T$ of ${\cal A}=(T,\delta)$.
An $\underline{\cal A}$-$E$-{\sl bimodule} $M$ is {\sl proper} iff
$\alpha_M$ factors through the canonical embedding map
$$\End_{\underline{A}}(M)^{op}\rightmap{} \End_{\underline{\cal A}}(M)^{op} \hbox{ such that }
f\mapsto (f,0).$$

The full subcategory of $\underline{\cal A}\g E\g\Mod$ formed by the proper
$\underline{\cal A}\g E$-bimodules will be denoted by $\underline{\cal A}\g
E\g\Mod_p$.  The canonical embedding functor $L_{\underline{\cal A}}:\underline{A}\g\Mod\rightarrow
\underline{\cal A}\g\Mod$, which maps each $\underline{A}$-morphism $f$ to $(f,0)$,
induces a functor $L^E_{\underline{\cal A}}:\underline{A}\g E\g\Mod\rightarrow\underline{\cal A}\g
E\g\Mod$. Clearly, the  proper $\underline{\cal A}$-$E$-bimodules coincide with the
images of the $\underline{A}$-$E$-bimodules under the faithful functor
$L^E_{\underline{\cal A}}$,
we will identify this class of objects with the $\underline{A}$-$E$-bimodules.
\end{definition}

The following  lemma is shown as in the classical case, see \cite[(21.3)]{BSZ}.

\begin{lemma}\label{L: F^E} Assume that $E$ is a $k$-algebra and $\underline{\cal A}$, $ \underline{\cal A}'$
are interlaced weak ditalgebras. Then any functor
$F:{\underline{\cal A}'}\g\Mod\rightmap{}\underline{\cal A}\g \Mod$
 induces a functor
$$F_E:{\underline{\cal A}'}\g E\g\Mod\rightmap{}\underline{\cal A}\g E\g\Mod,$$
which maps any object $M\in \underline{\cal A}'\g E\g \Mod$ onto
$F(M)=F_E(M)$, where the structure of
$\underline{\cal A}$-$E$-bimodule on $F(M)$ is given by the composition morphism
$$\alpha_{F(M)}:E\rightmap{\alpha_{M}} \End_{\underline{\cal A}'}(M)^{op}\rightmap{F}
\End_{\underline{\cal A}}(F(M))^{op}.$$
Moreover, if $F$ is full and faithful, then $F_E$ is so.
In this case, if an $\underline{\cal A}\g E$-bimodule $M$ is
isomorphic as an $\underline{\cal A}$-module  to $F(N)$, for some
$\underline{\cal A'}$-module $N$, then $N$ admits a natural structure of
$\underline{\cal A'}$-$E$-bimodule such that $M\cong F_E(N)$ in $\underline{\cal A}\g E\g\Mod$.
\end{lemma}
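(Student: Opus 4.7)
The plan is to exhibit $F_E$ explicitly and then verify its properties using only the functoriality of $F$. On objects, $F_E(M)$ has underlying $\underline{\cal A}$-module $F(M)$, equipped with the $E$-action
$$\alpha_{F(M)}:E\rightmap{\alpha_M} \End_{\underline{\cal A}'}(M)^{op}\rightmap{F} \End_{\underline{\cal A}}(F(M))^{op};$$
this is a composition of $k$-algebra homomorphisms (the second because $F$ being a functor induces an algebra map between the opposite endomorphism algebras), so it equips $F(M)$ with a bona fide $\underline{\cal A}$-$E$-bimodule structure. On morphisms, I set $F_E(f):=F(f)$; given $f\in\Hom_{\underline{\cal A}'\g E}(M,N)$, applying $F$ to $f\alpha_M(e)=\alpha_N(e)f$ yields $F(f)\alpha_{F(M)}(e)=\alpha_{F(N)}(e)F(f)$, so $F_E(f)\in\Hom_{\underline{\cal A}\g E}(F_E(M),F_E(N))$. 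Functoriality of $F_E$ is then inherited from $F$.

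Suppose now that $F$ is full and faithful. Faithfulness of $F_E$ is immediate since $F_E$ agrees with $F$ on morphisms. For fullness, take $g\in\Hom_{\underline{\cal A}\g E}(F_E(M),F_E(N))$; fullness of $F$ gives $f\in\Hom_{\underline{\cal A}'}(M,N)$ with $F(f)=g$. For each $e\in E$, the bimodule condition on $g$ together with $\alpha_{F(M)}=F\circ\alpha_M$ and $\alpha_{F(N)}=F\circ\alpha_N$ gives
$$F(f\alpha_M(e))=F(f)F(\alpha_M(e))=gF(\alpha_M(e))=F(\alpha_N(e))g=F(\alpha_N(e)f),$$
and faithfulness of $F$ yields $f\alpha_M(e)=\alpha_N(e)f$, so $f\in\Hom_{\underline{\cal A}'\g E}(M,N)$.

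For the last assertion, let $\phi:M\rightmap{\sim} F(N)$ be an isomorphism in $\underline{\cal A}\g\Mod$ and transport the $E$-action along $\phi$ to obtain $\alpha_{F(N)}(e):=\phi\alpha_M(e)\phi^{-1}\in\End_{\underline{\cal A}}(F(N))^{op}$. Since $F$ being full and faithful induces a $k$-algebra isomorphism $F:\End_{\underline{\cal A}'}(N)^{op}\rightmap{\sim}\End_{\underline{\cal A}}(F(N))^{op}$, there is a unique $k$-algebra homomorphism $\alpha_N:E\rightmap{}\End_{\underline{\cal A}'}(N)^{op}$ with $F\circ\alpha_N=\alpha_{F(N)}$; this gives $N$ the desired $\underline{\cal A}'\g E$-bimodule structure, and by construction $\phi$ becomes an isomorphism $M\cong F_E(N)$ in $\underline{\cal A}\g E\g\Mod$. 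The only real care needed, and the main obstacle if any, is consistent bookkeeping with the $(-)^{op}$ convention to ensure that $F$ applied to endomorphism algebras yields an algebra (not anti-algebra) homomorphism, so that $\alpha_{F(M)}$ and the transferred $\alpha_N$ really are algebra maps; once the opposite convention is used uniformly, all verifications reduce to direct diagram chases.
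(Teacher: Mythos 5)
Your proof is correct and is exactly the standard direct verification that the paper itself defers to (it simply cites the classical case \cite{BSZ}(21.3) without reproducing the argument). The definition of $F_E$ via $\alpha_{F(M)}=F\circ\alpha_M$, the fullness/faithfulness check, and the transport of the $E$-structure through the isomorphism $F:\End_{\underline{\cal A}'}(N)^{op}\rightmap{}\End_{\underline{\cal A}}(F(N))^{op}$ are all as intended, including your correct observation that passing to opposite algebras preserves the algebra-homomorphism property of the map induced by $F$.
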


\begin{definition}\label{D: length} Let $\underline{\cal A}$ be a  interlaced weak ditalgebra with layer $(R,W)$ and $E$ any $k$-algebra. Given any $\underline{\cal A}\g E$-bimodule $M$, the composition of $\alpha_M: E\rightmap{}\End_{\underline{\cal A}}(M)^{op}$ with the projection $\pi_M: \End_{\underline{\cal A}}(M)^{op}\rightmap{}\End_R(M)^{op}$, which maps $(f^0,f^1)$ onto $f^0$, determines a structure of  $R\g E$-bimodule on $M$. We will denote by $\ell_E(M)$ the length of this right $E$-module.

The full
subcategory of $\underline{\cal A}\g E\g\Mod$ formed by the finite
$E$-length bimodules is denoted by  $\underline{\cal A}\g E\g\mod$
and its intersection with $\underline{\cal A}\g E\g\Mod_p$ by $\underline{\cal
A}\g E\g\mod_p$.

 Suppose that $\underline{\cal A}$ and $\underline{\cal A'}$ are
 layered interlaced weak ditalgebras and $E$ is a $k$-algebra.  Then we
say that a functor $F:\underline{\cal A}\g E\g\Mod\rightmap{}\underline{\cal A}'\g E\g\Mod$ is {\sl length-controlling} iff
$\ell_E(M)$ finite implies $\ell_E(F(M))$ finite  and, furthermore,
$\ell_E(M)\leq\ell_E(F(M))$, for all $M\in \underline{\cal A}\g E\g\Mod$.
If $\ell_E(M)=\ell_E(F(M))$, for all $M$, then $F$ is called
{\sl length-preserving}.
\end{definition}

\begin{lemma}\label{L: funtores restrinccion y longitudes} Let $F':\underline{\cal A}'\g\Mod\rightmap{}\underline{\cal A}\g\Mod$ be the functor constructed in (\ref{P: la nueva}), assume that $F'$ is full and faithful, and have in mind the notation used there. Then, for any $k$-algebra $E$, we have length-preserving full and faithful induced functors
$$F'_E:\underline{\cal A}'\g E\g\Mod\rightmap{}\underline{\cal A}\g E\g \Mod \hbox{ \ \ and \ \ }
F'_E:\underline{\cal A}'\g E\g\Mod_p\rightmap{}\underline{\cal A}\g E\g \Mod_p. 
$$
Moreover, their images consist of the objects annihilated by $K_0$. 
\end{lemma}

\begin{proof} Both functors, $F'_E$ and its restriction,  are full and faithful by (\ref{L: F^E}). 

Let $M\in \underline{\cal A}\g E\g \Mod$ such that $K_0M=0$. Then, $M$ is canonically an $A'$-module which we denote by $N$. So $N\in \underline{\cal A}'\g\Mod$ and the restriction of $N$  through the morphism $\phi_{\vert A}:A\rightmap{}A'$ is such that  $F'(N)= M$ in $\underline{\cal A}\g\Mod$. From (\ref{L: F^E}), we get  that $N$ admits a structure of $\underline{\cal A}'\g E$-bimodule such that $F'_E(N)= M$.  

By definition of $F'$, for any morphism  
$f=(f^0,f^1):N\rightmap{}N'$ in $\underline{\cal A}'\g\Mod$, we have $F'(f)=(f^0,f^1\phi_{\vert V})$. Hence, if $\alpha_N: E\rightmap{}\End_{{\cal A}'}(N)^{op}$ is the structure map of some $N\in \underline{\cal A}'\g E\g \Mod$, the structure map  $\alpha_{F'(N)}$ of its image $F'_E(N)$ in $\underline{\cal A}\g E\g\Mod$ is such that $\alpha_{F'(N)}(e)=F'[\alpha_N(e)]=(\alpha_N(e)^0,\alpha_N(e)^1\phi_{\vert V})$, for $e\in E$. From this we get that $\ell_E(N)=\ell_E(F'_E(N))$. 

Finally, if $ M\in \underline{\cal A}\g E\g \Mod_p$ is such that $K_0M=0$, we have constructed $N\in \underline{\cal A}'\g E\g\Mod$ such that $F'_E(N)= M$. Since $M$ is a proper bimodule, from the preceding formula for $\alpha_{F'(N)}(e)$, we derive that $\alpha_N(e)^1\phi_{\vert V}=0$. Since $\phi$ is a surjective morphism of graded algebras, we obtain that $\alpha_N(e)^1=0$ and $N\in \underline{\cal A}'\g E\g \Mod_p$.   
\end{proof}

\begin{definition}\label{D: regularidades 1} Let $F:\underline{\cal A}'\g\Mod\rightmap{}\underline{\cal A}\g\Mod$ be a functor, where $\underline{\cal A}$ and $\underline{\cal A}'$  are 
  interlaced weak ditalgebras. The functor $F$ is called a \emph{regular   
  length-controlling functor} iff for any $k$-algebra $E$, the functor $F_E:\underline{\cal A}'\g E\g\Mod\rightmap{}\underline{\cal A}\g E\g\Mod$ is length-controlling.

We say that $F$ is a \emph{regular  equivalence} if $F_E:\underline{\cal A}'\g E\g\Mod\rightmap{}\underline{\cal A}\g E\g\Mod$ is an equivalence of categories, for every $k$-algebra $E$, and  restricts to an equivalence $F_E:\underline{\cal A}'\g E\g\Mod_p\rightmap{}\underline{\cal A}\g E\g\Mod_p$, for every division $k$-algebra $E$.   
\end{definition}

\begin{remark}\label{R: reg equiv sii reg equiv for div alg} Notice that once we know that  that a full and faithful functor $F:\underline{\cal A}'\g\Mod\rightmap{}\underline{\cal A}\g\Mod$ induces an equivalence $F_E:\underline{\cal A}'\g E\g\Mod_p\rightmap{}\underline{\cal A}\g E\g\Mod_p$, for every division $k$-algebra $E$, then $F$ is a regular equivalence. 

Indeed, considering the division algebra $k$, we get that $F$ is an equivalence. Then,  we can apply (\ref{L: F^E}), to obtain the density of $F_E:\underline{\cal A}'\g E\g\Mod\rightmap{}\underline{\cal A}\g E\g\Mod$, for every $k$-algebra $E$. 
\end{remark}

\begin{remark}\label{L: FdE, FaE, FqE} The preceding lemma applies to the reduction functors $F^z:\underline{\cal A}^z\g\Mod\rightmap{}\underline{\cal A}\g\Mod$, where $z\in \{d,q,a,r\}$, see  
\cite[(6.2), (6.4), (6.5)]{bpsqh},  (\ref{P: regularization}) and (\ref{R: basic properties for Fd, Fq, Fa}).  It is easy to see that for $z\in \{q,a\}$, we have that $F^z$ is in fact a regular equivalence. The image of the functor  $F_E^d$  consists of  the objects annihilated by the deleted idempotent. The  density properties in the regularization case $z=r$ deserve a more careful attention, this is studied in the next statement. 
\end{remark}

\begin{proposition}\label{P: regularization and bimods}
 Let $\underline{\cal A}$ be a triangular interlaced weak ditalgebra with 
triangular layer $(R,W)$. Assume that $\underline{\cal A}^r$ is obtained from $\underline{\cal A}$ by regularization. Adopt the assumptions of (\ref{P: regularization}), including that $\underline{\cal A}$ is a Roiter interlaced weak ditalgebra and that $\Ker \delta\cap W'_0=0$. Then, 
 the associated functor $F^r: \underline{\cal A}^r\g \Mod\rightmap{}\underline{\cal A}\g \Mod$ is a regular length-preserving equivalence. 
\end{proposition}

\begin{proof} (1): The fact that $F^r_E:\underline{\cal A}^r\g E\g \Mod\rightmap{}\underline{\cal A}\g E\g \Mod$ is a 
length-preserving equivalence for any $k$-algebra $E$, follows from   (\ref{L: funtores restrinccion y longitudes}), (\ref{P: regularization}), and (\ref{L: F^E}). 

\medskip 
\noindent(2): Let us prove that $F_E^r:\underline{\cal A}^r\g E\g\Mod_p\rightmap{}\underline{\cal A}\g E\g\Mod_p$ is an equivalence, for any $k$-algebra $E$. 

Recall that  $W_0=W'_0\bigoplus W''_0$, $W_1=\delta(W'_0)\bigoplus W''_1$ and $\delta:W'_0\rightmap{}\delta(W'_0)$ is an isomorphism. Let us denote by $\psi$ the inverse of the latter. 
Take any proper bimodule $M\in \underline{\cal A}\g E\g\Mod_p$. Using its underlying $\underline{\cal A}$-module structure and the fact that $\underline{\cal A}$ is a Roiter interlaced weak ditalgebra, proceed as in the proof of \cite[(6.3)]{bpsqh} to construct $\underline{M}\in \underline{\cal A}\g\Mod$, with the same underlying $R$-module $M$ and such that $W'_0\underline{M}=0$, and an isomorphism $f=(f^0,f^1):\underline{M}\rightmap{}M$ in $\underline{\cal A}\g \Mod$.  
 Moreover, by construction we have that $f^0=id_M$,  $f^1(W''_1)=0$, and $f^1(w)[m]=\psi(w)m$, for $w\in \delta(W'_0)$.  
The $A$-module structure of $\underline{M}$  is given by 
 $$ a \star m = am - f^1(\delta(a))(m), \hbox{ \ for } a\in A \hbox{ and } m\in M.$$
Clearly,  $\underline{M}$ is  an  $R\g E$-bimodule with the same action of $E$ given on $M$. We will show the following.
\medskip

\noindent\emph{Claim:} $\underline{M}$ is an $A\g E$-bimodule.   
\medskip

First, observe that given $e\in  E$ and  
 $ w \in \delta(W_0')$, we have  $[f^1(w)(m)]e = [\psi(w)m]e =
\psi(w)(me) = f^1(w)(me)$. 
For $w \in W''_1$, we have  $[f^1(w)(m)]e =0= f^1(w)(me)$. 
Thus, for each $ w \in W_1$, we have the equation
\begin{equation}\tag{$*$} [f^1(w)(m)]e = f^1(w)(me).
\end{equation}

Then, for $w \in W_0$, we get 
$$(w \star m)e = (wm)e - [f^1(\delta(w))(m)]e = w(me) - f^1(\delta(w))(me) = w \star (me).$$
Since $R$ and $W_0$ generate $A$, we obtain that $(a\star m)e=a\star (me)$, for $a\in A$, $m\in \underline{M}$, and $e\in E$. So, our claim is proved. 

Now, since $W'_0\underline{M}=0$, we  have that $\underline{M}\in \underline{\cal A}^r\g E\g\Mod_p$ and, using equation $(*)$,  we obtain  that $f:F^r_E(\underline{M})\rightmap{} M$ is an isomorphism  in $\underline{\cal A}\g E\g\Mod_p$. 
\end{proof}

\begin{lemma}\label{L: densidad de F^X} Let $\underline{\cal A}=({\cal A},I)$ be a triangular interlaced weak ditalgebra and $X$ a complete  triangular admissible $B$-module, where $B=T_R(W'_0)$ as in \cite[(6.7)]{bpsqh}. Consider the triangular interlaced weak
ditalgebra $\underline{\cal A}^X$ and the associated reduction functor
$F^X:\underline{\cal A}^X\g\Mod\rightmap{}\underline{\cal A}\g\Mod$.

Given any $k$-algebra $E$, the functor $F^X_E$ maps proper bimodules onto proper bimodules. 
Moreover, if $M\in \underline{\cal A}\g E\g\Mod_p$, $N\in S\g E\g\Mod$, and $X\otimes_SN\cong M$ in $B\g E\g\Mod$, then, there is some $\overline{N}\in \underline{\cal A}^X\g E\g\Mod_p$  with underlying $S\g E$-bimodule
$N$ and such that $F^X_E(\overline{N})\cong M$ in 
$\underline{\cal A}\g E\g\Mod_p$.
 \end{lemma}
 
 \begin{proof} The fact that $F^X_E$ maps proper bimodules onto proper bimodules follows from the definition of $F_E^X$ on morphisms. Similar to the proof of \cite[(25.5)]{BSZ}, now using \cite[(6.7)]{bpsqh}(4).  
 \end{proof}

\begin{definition}\label{D: minimal algebra and source point} Recall that a \emph{minimal algebra} 
$R$ 
is a finite product of algebras $R=\prod_{u\in {\cal P}}R_u$, where each $R_u$ 
is either a rational $k$-algebra or is isomorphic to the field $k$. Thus the unit of $R$ is a sum of primitive orthogonal idempotents $1=\sum_{u\in {\cal P}}e_u$, where each $e_u$ corresponds to the unit element of $R_u$.  
\end{definition}

\begin{proposition}\label{P: X-reduccion y bimods} 
Assume that $\underline{\cal A}=({\cal A},I)$ is a triangular interlaced weak ditalgebra with layer $(R,W)$. 
Under the assumptions of \cite[(6.7)]{bpsqh}, 
consider the algebra $B=T_R(W_0')$. 
 Assume that $\underline{\cal A}^X$ is the triangular interlaced weak ditalgebra obtained from  $\underline{\cal A}$ by reduction, using a complete triangular admissible  $B$-module $X$, thus $\underline{\cal A}^X$ has layer $(S,W^X)$, where $\Gamma=\End_B(X)^{op}=S\oplus P$. Assume that $S$ is a minimal algebra.  
Consider the associated full and faithful functor $F^X:\underline{\cal A}^X\g\Mod\rightmap{}\underline{\cal A}\g\Mod$. Denote by $\mu(X)$ the number of generators in a set of generators of the right $S$-module $X$ with minimal cardinality. 
Then, the following holds: 
\begin{enumerate}
 \item For all  $N\in \underline{\cal A}^X\g\Mod$, we have that
$\Endol{F^X(N)}\leq \mu(X)\times\Endol{N}$. Then   $\underline{\cal A}^X$  is pregenerically tame, 
whenever $\underline{\cal A}$ is so.
\item For any $k$-algebra $E$, the associated  functor $F^X_E:\underline{\cal A}^X\g E\g\Mod\rightmap{}\underline{\cal A}\g
E\g\Mod$ is length-controlling, full and faithful. For any $N\in \underline{\cal A}^X\g E\g \Mod$, the inequality  $\ell_E(F^X_E(N))\leq \mu(X)\times \ell_E(N)$ holds. 
\end{enumerate}
\end{proposition}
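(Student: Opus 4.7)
The plan is to deduce part (1) from part (2), and to prove (2) by constructing an explicit right $E$-submodule filtration of $F^X_E(N) = X \otimes_S N$ of length $\mu(X)$ with each subquotient a quotient of $N$. First, (1) follows from (2) by specializing to $E = E_N$ with $\alpha_N = \mathrm{id}$: by (\ref{P: completez de X para la reduccion}) the functor $F^X$ is full and faithful, so it induces an algebra isomorphism $\phi: E_N \to E_{F^X(N)}$, and by (\ref{L: F^E}) the resulting bimodule $F^X_{E_N}(N)$ carries structure map $\phi$; thus $\ell_{E_N}(N) = \Endol N$ and $\ell_{E_N}(F^X_{E_N}(N)) = \Endol F^X(N)$, and (2) yields the inequality. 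Pregeneric tameness of $\underline{\cal A}^X$ then transfers: if $N$ is pregeneric, then $F^X(N)$ is indecomposable by (\ref{P: Reducciones vs wildness}), has endolength at most $\mu(X)\cdot\Endol N$, and is of infinite $k$-dimension; full-and-faithfulness of $F^X$ forces the induced map on pregeneric isoclasses to be injective.

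For (2), the triangularity of the admissible $B$-module $X$ implies the ideal $P$ in $\Gamma = S \oplus P$ is nilpotent; fix $t$ with $P^t = 0$ and consider the filtration $X \supseteq XP \supseteq \cdots \supseteq XP^t = 0$. Each quotient $XP^a/XP^{a+1}$ is annihilated by $P$, so is naturally a right $S$-module. Choose minimal right-$S$-generators of each layer, lift to $XP^a$, and order the lifts from deepest to shallowest to obtain a tuple $(x_1, \ldots, x_n)$ of generators of $X$ as right $S$-module with $x_k \in XP^{a_k}$ and $a_1 \geq \cdots \geq a_n = 0$. Minimality of $S$ (enabling Nakayama-type lifting layer by layer) ensures that this is a minimal generating set, so $n = \mu(X)$. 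The crucial triangularity property is
\begin{equation*}
x_k \cdot P \;\subseteq\; \sum_{i < k} x_i \cdot S \qquad \text{for each } k,
\end{equation*}
since $x_k p \in XP^{a_k+1}$ is $S$-spanned by the generators of depth $> a_k$, i.e., those appearing strictly earlier in the ordering.

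Now set $L_k := \sum_{i \leq k} x_i \otimes N \subseteq X \otimes_S N$, so that $L_0 = 0$ and $L_n = F^X_E(N)$. For any $e \in E$, writing $\alpha_N(e) = (g^0, g^1)$, the formula in (\ref{P: (AX, IX)}) gives
\begin{equation*}
(x_k \otimes m) \cdot e \;=\; x_k \otimes g^0(m) + \sum_j x_k p_j \otimes g^1(\gamma_j)(m),
\end{equation*}
whose first summand lies in $x_k \otimes N \subseteq L_k$ and whose second, after pushing the $S$-scalars across the tensor into $N$, lies in $L_{k-1}$ by the key property. Hence each $L_k$ is a right $E$-submodule. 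The same computation shows that $\psi_k : N \to L_k/L_{k-1}$, $m \mapsto x_k \otimes m + L_{k-1}$, is a surjective right $E$-module homomorphism, so $\ell_E(L_k/L_{k-1}) \leq \ell_E(N)$. Summing across $k = 1, \ldots, n$ yields $\ell_E(F^X_E(N)) \leq \mu(X) \cdot \ell_E(N)$. Length-controlling follows, and full-and-faithfulness of $F^X_E$ is immediate from (\ref{L: F^E}) applied to the full and faithful $F^X$.

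The main obstacle is the filtration step: one must verify that the $P$-adic filtration produces an honestly \emph{minimal} right-$S$-generating set of $X$ of cardinality exactly $\mu(X)$, rather than some larger generating set. This verification depends on both the triangular structure of $X$ (giving nilpotence of $P$, so that $XP$ is a superfluous submodule) and the minimality hypothesis on $S$ (so that projective covers over $S$ behave well and Nakayama-type lifting applies to the successive quotients $XP^a/XP^{a+1}$). Once this point is pinned down, the remaining argument is essentially a bookkeeping verification using the explicit formula from (\ref{P: (AX, IX)}).
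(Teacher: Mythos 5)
Your reduction of (1) to (2), the tameness transfer, and the verification that each $L_k$ is a right $E$-submodule of $F^X_E(N)=X\otimes_SN$ with $L_k/L_{k-1}$ an epimorphic image of $N$ are all sound and consistent with the paper. But the step you yourself flag as the main obstacle is a genuine gap, and it does not go away: the generating set obtained by lifting minimal $S$-generators of the layers $XP^a/XP^{a+1}$ has cardinality $\sum_a\mu(XP^a/XP^{a+1})$, and for a minimal algebra $S=\prod_i S_i\epsilon_i$ with several factors this sum can strictly exceed $\mu(X)$. Indeed $\mu(M)=\max_i\mu(M\epsilon_i)$ for modules over a product, so $\sum_a\max_i\mu(XP^a\epsilon_i/XP^{a+1}\epsilon_i)\geq\max_i\sum_a\mu(XP^a\epsilon_i/XP^{a+1}\epsilon_i)=\mu(X)$, with strict inequality whenever the maximum is attained at different factors in different layers; and no cleverer choice can repair this, since already for $\mu(X)=1$ a single generator $x_1$ with $x_1P\subseteq 0$ would force $XP=(x_1S)P\subseteq x_1P=0$. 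So your filtration proves $\ell_E(F^X_E(N))\leq n\,\ell_E(N)$ for some $n\geq\mu(X)$, not the stated constant. The paper decouples the two difficulties: it first shows, by essentially your $P$-adic filtration argument (cf.\ \cite{BSZ}(25.7)), that the length of the \emph{twisted} right $E$-module $X\otimes_SN$ equals the length of $X\otimes_SN$ for the \emph{untwisted} action of $E$ on the tensor factor $N$ alone; only then does it invoke an arbitrary epimorphism $S^{\mu(X)}\rightmap{}X$, which for the untwisted structure immediately yields an epimorphism of right $E$-modules $N^{\mu(X)}\rightmap{}X\otimes_SN$ and hence the bound with the correct constant $\mu(X)$.

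A second gap: by Definition (\ref{D: length}), ``length controlling'' requires not only that finiteness of $\ell_E(N)$ be preserved but also the reverse inequality $\ell_E(N)\leq\ell_E(F^X_E(N))$. Your proof establishes only the upper bound, so the sentence ``length-controlling follows'' is unjustified. This lower bound is in fact where the minimality of $S$ is used in the paper's proof (again through the comparison with the untwisted structure), not for a Nakayama-type lifting of generators.
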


\begin{proof} The argument is essentially the same as \cite[(4.7)]{BPS4}, an adaptation of the proof of \cite[(25.7)]{BSZ}, we sketch it here.

We first prove the second item. Take an  
  $\underline{\cal A}^X\g E$-bimodule $N$, with bimodule structure $\alpha_N:E\rightmap{}\End_{\underline{\cal A}^X}(N)^{op}$, so the $\underline{\cal A}\g E$-bimodule $F^X(N)$, has bimodule structure $\alpha_{F^X(N)}:E\rightmap{}\End_{\underline{\cal A}}(F^X(N))^{op}$, as in (\ref{L: F^E}). 

  As in the proof of \cite[(25.7)]{BSZ}, it can be shown that  $\ell_E(X\otimes_S N)$ coincides with the length of the right $E$-module $X\otimes_S N$, where $E$ acts as usual on the right  tensor factor $N$. 
  Since $S$ is a minimal algebra, from this it follows that $\ell_E(N)\leq \ell_E(F^X_E(N))$, see the argument in the proof of 
  \cite[(25.7)]{BSZ}.
 
 Moreover, from an 
epimorphism $S^{\mu(X)}\rightmap{}X$ of right $S$-modules,  we obtain an epimorphism of right $E$-modules
$S^{\mu(X)}\otimes_SN\rightmap{}X\otimes_SN$. 
Therefore, we obtain  
$\ell_E(F^X(N))=\ell_E(X\otimes_S N)\leq 
\ell_E(N^{\mu(X)})=\mu(X)\times \ell_E(N)$, as wanted. 
  
 For the first item, given $N\in \underline{\cal A}^X\g\Mod$, set  $E:=\End_{\underline{\cal A}^X}(N)^{op}$. Then,
 we have the $\underline{\cal A}^X\g E$-bimodule $N$, with bimodule structure $\alpha_N:E\rightmap{}\End_{\underline{\cal A}^X}(N)^{op}$ given by the identity map.  
Since $F^X$ is full and faithful, we have the isomorphism of algebras $E\cong \End_{\underline{\cal A}}(F^X(N))^{op}$ induced by $F^X$.  It 
 provides, by restriction, a structure of right $E$-module on $F^X(N)$. Clearly, $\Endol{N}=\ell_E(N)$ and $\Endol{F^X(N)}=\ell_E(F_E^X(N))$. So, the formula in 1 follows from 2.

Finally, if we assume that $\underline{\cal A}^X$ is not pregenerically tame, we have an infinite family of pairwise non-isomorphic pregeneric $\underline{\cal A}^X$-modules with bounded endolength. Then, applying the full and faithful functor $F^X$ to them, we obtain an infinite family of pairwise non-isomorphic pregeneric $\underline{\cal A}$-modules with bounded endolength. Hence, the interlaced weak ditalgebra $\underline{\cal A}$ is not pregenerically tame.
\end{proof}

\section{Reduction to minimal ditalgebras}

In this section we show that the study of bimodules
 with bounded length over any
non-wild ${\cal P}$-oriented triangular interlaced weak ditalgebra $\underline{\cal A}$, as in  (\ref{T: cal(A) pregen tame sii tame})\&(\ref{D: triangular interlaced weak ditalg}), 
can be reduced to the study of the bimodules over a minimal ditalgebra
obtained from $\underline{\cal A}$ by a finite number of reductions.

\begin{definition}\label{D: regularidades 2} Let $F:\underline{\cal A}'\g\Mod\rightmap{}\underline{\cal A}\g\Mod$ be a functor, where $\underline{\cal A}$ and $\underline{\cal A}'$  are 
  interlaced weak ditalgebras.  
    Given $d\in \hueca{N}$, the functor $F$ is called \emph{regular $d$-dense}  if is satisfies the following: for any $k$-algebra $E$, if $M\in \underline{\cal A}\g E\g\Mod$ with $\ell_E(M)\leq d$, there is $N\in \underline{\cal A}'\g E\g\Mod$ such that $F_E(N)\cong M$. Moreover, if $E$ is a division $k$-algebra and $M$ is a proper $\underline{\cal A}\g E$-bimodule, then $N$ can be chosen to be a proper $\underline{\cal A}'\g E$-bimodule. 
\end{definition}

\begin{lemma}\label{L: F0X ddenso -> FX ddenso} In the context of (\ref{R: F  y F0}), for  a complete admissible $B$-module $X$, we have the following: 
\begin{enumerate}
\item If the functor $\widehat{F}_0^X$ is regular $d$-dense (resp. regular equivalence), so is the functor $F_0^X$. 
\item If the functor $F_0^X$ is regular $d$-dense (resp. regular equivalence), so is the functor $F^X$.
\end{enumerate}
\end{lemma}

\begin{proof} From (\ref{R: F  y F0}), for any $k$-algebra $E$,   
we have the commutative diagram 
$$\begin{matrix}{\cal A}_0^X\g E\g\Mod&\rightmap{(\widehat{F}_0^X)_E}&{\cal A}_0\g E\g\Mod\\
\shortlmapup{}&&\shortrmapup{}\\
\underline{\cal A}^X_0\g E\g\Mod&\rightmap{(F^X_0)_E}&\underline{\cal A}_0\g E\g\Mod.\\
 \shortlmapup{F^E_{j^X}}&&\shortrmapup{F^E_j}\\
 \underline{\cal A}^X\g E\g\Mod &\rightmap{F^X_E}&\underline{\cal A}\g E\g\Mod.\\
 \end{matrix}$$

\noindent(1): Assume that $\widehat{F}_0^X$ is regular $d$-dense and 
take $M_0\in \underline{\cal A}_0\g E\g\Mod$ with $\ell_E(M_0)\leq d$. By assumption, there is some $N_0\in {\cal A}_0^X\g E\g\Mod$ with $(\widehat{F}_0^X)_E(N_0)\cong M_0$. So $\widehat{F}^X_E(N_0)$ is an ${\cal A}_0$-module with underlying $B$-module structure $X\otimes_SN_0$ and the structure of the ${\cal A}_0^X$-module  $N_0$ is uniquely determined  and satisfies $I_0^XN_0=0$, see the proof of \cite[(6.7)(4)]{bpsqh}. So, $N_0\in \underline{\cal A}_0^X\g E\g\Mod$ and $(F_0^X)_E(N_0)\cong M_0$.  
 
 The statement on proper bimodules in the definition of regular $d$-dense can be proved as in \cite[(25.5)]{BSZ}.
\medskip

\noindent(2): Assume that $F_0^X$ is regular $d$-dense and 
take $M\in \underline{\cal A}\g E\g\Mod$ with $\ell_E(M)\leq d$. 
Since $\ell_E(F_j^E(M))=\ell_E(M)\leq d$, by assumption, there is some $N_0\in \underline{\cal A}^X_0\g E\g \Mod$ such that $(F_0^X)_E(N_0)\cong F_j^E(M)$. Hence, $F_j^E(M)\cong X\otimes_S N_0$ as $B$-modules. This implies, by \cite[(6.7)]{bpsqh}, the existence of some $N\in \underline{\cal A}^X\g\Mod$ such that $F^X(N)\cong M$ in $\underline{\cal A}\g\Mod$ and $F_{j^X}(N)=N_0$. Then, from (\ref{L: F^E}), $N$ admits a structure of $\underline{\cal A}^X\g E$-bimodule such that $F^X_E(N)\cong M$ in $\underline{\cal A}\g E\g\Mod$, as we wanted to show. 

If $E$ is a division algebra and $M$ is a proper bimodule, by assumption, $N_0$ can be chosen to a proper $\underline{\cal A}_0\g E$-bimodule.  Then, proceeding as in \cite[(25.5)]{BSZ}, we can show that $N$ is a proper $\underline{\cal A}^X\g E$-bimodule too.

The density in the statement of our lemma, refering to regular equivalences, is proved similarly. 
\end{proof}

 \begin{lemma}[edge-reduction]\label{L: edge-reductions}
 Let $\underline{\cal A}=({\cal A},I)$ be a triangular interlaced weak ditalgebra with
 triangular layer $(R,W)$ and derivation $\delta$, where $R$ is a minimal algebra.
  We have $R=\prod_{u\in {\cal P}}R_ue_u$, where each $Re_u$ is either
isomorphic to $k$ or to some rational algebra.
We can assume that ${\cal P}=J\uplus J'$ where $Re_u=ke_u$, for $u\in J$, and
$Re_v=R_ve_v$ with $R_v=k[x]_{g_v}$, for $v\in J'$. Moreover, assume that  $W_0=W'_0\oplus W''_0$, and that the $R$-$R$-bimodule $W'_0$ is freely generated by some element $\alpha\in e_{v_0}W_0e_{u_0}$, where $u_0$ and $v_0$ are two  different elements in $J$ and $\delta(\alpha)=0$. Set $e=e_{u_0}+e_{v_0}$.

   Then,  there is complete triangular admissible $B$-module $X$ such that   $\underline{\cal A}^X:=({\cal A}^X,I^X)$ is a triangular interlaced weak ditalgebra
  with
   triangular layer $(S,W^X)$, where $S$ is a minimal algebra of the form  $$S=kf_{v_*}\times kf_z\times kf_{u_*}\times (1-e)R,$$
where $f_{v_*}, f_{z}, f_{u_*}$ are new  primitive idempotents of $S$.
   Moreover, the associated functor 
$F^X:\underline{\cal A}^X\g\Mod\rightmap{}\underline{\cal A}\g\Mod$ is a regular 
length-controlling equivalence.
\end{lemma}

\begin{proof}  Here, we consider the subalgebra $B:=eB\times (1-e)R$, where $eB$ is identified with the path algebra  of the quiver $u_0\rightmap{\alpha}v_0$. Thus, $eB$ admits
only three classes of indecomposables represented by the simple injective $Z_{u_0}$, the simple projective $Z_{v_0}$, and the injective-projective $Z_z$, which are naturally considered as $B$-modules. We consider the $B$-module
$X=Z_{u_0}\oplus Z_z\oplus Z_{v_0}\oplus (1-e)R$.
Then, we have the splitting
 $\End_B(X)^{op}=S\oplus P$, where
 $S$ has the form specified in the statement of this lemma, where
  $f_{v_*}, f_z, f_{u_*}\in \End_B(X)^{op}$ are the idempotents corresponding to the indecomposable direct summands $Z_{u_0}$, $Z_z$, and $Z_{v_0}$  of $X$,
  and $P=\rad \End_B(Z_{u_0}\oplus Z_z\oplus Z_{v_0})^{op}$.  The $B$-module $X$ is complete by \cite[(6.9)]{bpsqh}.

  From (\ref{R: F y F0})(1) and (\ref{P: X-reduccion y bimods}), we  know  that  $F^X$ is a regular length-controlling full and faithful functor.  We know from (\ref{L: F0X ddenso -> FX ddenso}), that, in order to show that $F^X$ is a regular equivalence, it is enough to show that the functor  $\widehat{F}^X_0:{\cal A}_0^X\g\Mod\rightmap{}{\cal A}_0\g\Mod$ is a regular equivalence. But, since ${\cal A}_0$ is a seminested ditalgebra, this is a consequence of \cite[(25.8)]{BSZ}. 
  See also \cite[(2.10)]{CB2}. 
\end{proof}

 \begin{lemma}[multiple $d$-unravelling]\label{L: d-unravellings}
 Let $\underline{\cal A}=({\cal A},I)$ be a triangular interlaced weak ditalgebra with
 triangular layer $(R,W)$, where $R$ is a minimal algebra.
  We have $R=\prod_{u\in {\cal P}}R_ue_u$, where each $Re_u$ is either
isomorphic to $k$ or to some rational algebra.
We can assume that ${\cal P}=J\uplus J'$ where $Re_u=ke_u$, for $u\in J$, and
$Re_v=R_ve_v$ with $R_v=k[x]_{g_v}$, for $v\in J'$.

 Consider $d\in \hueca{N}$ and non-zero elements $h_v\in R_v$, for $v\in J'$. Then, there is a complete triangular admissible $R$-module $X$ such that   $\underline{\cal A}^X:=({\cal A}^X,I^X)$ is a triangular interlaced weak ditalgebra
  with
   triangular layer $(S,W^X)$, where $S$ is a minimal algebra of the form  $$S=\left[\prod_{w\in J''}kf_w\right]\times \left[\prod_{v\in
J'}e_v(R_v)_{h_v}\right]\times \prod_{u\in J}ke_u,$$
where $\{f_w\}_{w\in J''}$ is a new finite family of primitive idempotents of $S$.
     The associated functor
$F^X:\underline{\cal A}^X\g\Mod\rightmap{}\underline{\cal A}\g\Mod$ is a regular full and faithful length-controlling $d$-dense functor.  
\end{lemma}

\begin{proof} Fix $d\in \hueca{N}$ and   set $e=\sum_{u\in J}e_u$. Consider the algebras 
$C'=\prod_{v\in J'}(Re_v)_{h_v}$ and $C''=\prod_{v\in J'}(Re_v)/\langle h_v^d\rangle$. The finite-dimensional $k$-algebra $C''$ 
admits only a finite number
 of isoclasses of indecomposable finite-dimensional modules
 represented by the $C''$-modules $\{Z_w\}_{w\in J''}$.
 Define $Z:=\bigoplus_{w\in J''}Z_w$ and consider the $R$-module
 $$X=Z\oplus C'\oplus Re.$$
 We have the splitting
 $\End_R(X)^{op}=S\oplus P$, with 
 $S=\left[\prod_{w\in J''}kf_w\right]\times C'\times Re$,  where 
  $f_w\in \End_R(X)^{op}$ is the idempotent corresponding to the indecomposable direct summand $Z_w$ of $X$,
  and $P=\rad \End_R(Z)^{op}$.
Apply \cite[(6.7)]{bpsqh} to $B=R$ and the triangular admissible $R$-module $X$ to obtain the triangular interlaced weak ditalgebra $\underline{\cal A}^X$, with layer $(S,W^X)$, and the functor $F^X:\underline{\cal A}^X\g\Mod\rightmap{}\underline{\cal A}\g\Mod$. From \cite[(6.9)]{bpsqh} and (\ref{R: F  y F0})(1), $X$ is complete and $F^X$ is full and faithful.

From  (\ref{P: X-reduccion y bimods}), the  functor  $F^X:\underline{\cal A}^X\g \Mod\rightmap{}\underline{\cal A}\g\Mod$ is a  regular length-controlling  full and faithful functor. From (\ref{L: F0X ddenso -> FX ddenso}), we know that in order to prove that $F^X$ is a regular $d$-dense functor, it will be enough to show that $\widehat{F}_0^X:{\cal A}_0^X\g \Mod\rightmap{}{\cal A}_0\g\Mod$ is a  regular $d$-dense functor. Here, ${\cal A}_0=(R,0)$. 

For this, we consider the following:  For each $v\in J'$, let $Z_v$ be the direct sum of a complete family $\{Z_w\}_{w\in J''_v}$ of representatives of the isomorphism classes of the indecomposable modules of the algebra 
$Re_v/\langle h_v^d\rangle$. Then,  
 consider the  $Re_v$-module  
$X_v := Z_v\oplus C'_v$, where  $C'_v:=(Re_v)_{h_v}$. 

Then   $C'=\prod_{v\in J'} C'_v$ and we may assume that $J''=\bigcup_{v\in J'}J''_v$; thus we have the  equalities of $R$-modules 
$Z=\bigoplus_{v\in J'}Z_v$ and  $X=[\bigoplus_{v\in J'}X_v]\oplus Re$. 

For each $v\in J'$, we have the splitting $\End_{Re_v}(X_v)^{op}=S_v\oplus P_v$, where $P_v=\rad\End_{Re_v}(Z_v)^{op}$, with  $S_v=\prod_{w\in J''_v}kf_w\times C'_v$ and $f_w$ is the idempotent in $\End_{Re_v}(X_v)^{op}$ corresponding to the direct summand $Z_w$ of $X_v$. We use the same notation for these new idempotents $f_w$ because, since $\End_R(X)^{op}=Re\times\prod_{v\in J'}\End_{Re_v}(X_v)^{op}$, we can identify them canonically with the corresponding $f_w$ used before.   
From \cite[(6.9)]{bpsqh}, each $Re_v$-module  $X_v$ is complete. For each $v\in J'$, we have the associated functor $F^{X_v}:(Re_v,0)^{X_v}\g\Mod\rightmap{}(Re_v,0)\g\Mod$. 

The isomorphism of algebras $T_S(P^*)\cong Re\times \prod_{v\in J'}T_{S_v}(P_v^*)$ determines an isomorphism of ditalgebras $(R,0)^X\cong (Re,0)\times\prod_{v\in J'}(Re_v,0)^{X_v}$. This last one induces an  isomorphism of categories in the upper horizontal arrow of the following commutative diagram 
$$\begin{matrix}
(R,0)^X\g \Mod&\rightmap{\cong}& (Re,0)\g \Mod \times \prod_{v\in J'}(Re_v,0)^{X_v}\g\Mod\\
\shortlmapdown{\widehat{F}_0^X}&&\shortrmapdown{id \times \prod_{v\in J'} F^{X_v}}\\
(R, 0)\g\Mod &\rightmap{\cong}& (Re, 0)\g\Mod \times \prod_{v\in J'}(Re_v , 0)\g\Mod,\\
\end{matrix}$$
see \cite[\S6]{BSZ}. 
Since each  $(Re_v,0)$ is a seminested 
 ditalgebra, from \cite[(25.9)]{BSZ}, we know that each functor  $F^{X_v} : (Re_v ,0)^{X_v}\g\Mod \rightmap{}
(Re_v , 0)\g\Mod$ is regular $d$-dense. See also \cite[(2.11)]{CB2}. It follows that the functor  $\widehat{F}_0^X$ is regular $d$-dense.  
\end{proof}

\begin{lemma}[ideal-reduction]\label{L: ideal-reduction}
 Let $\underline{\cal A}=({\cal A},I)$ be a triangular interlaced weak ditalgebra with
 triangular layer $(R,W)$, where $R$ is a minimal algebra.
  We have $R=\prod_{u\in {\cal P}}R_ue_u$, where each $Re_u$ is either
isomorphic to $k$ or to some rational algebra.
Assume that ${\cal P}=J\uplus J'$ where $Re_u=ke_u$, for $u\in J$, and
$Re_v=R_ve_v$ with $R_v=k[x]_{g_v}$, for $v\in J'$. Assume furthermore that $I$ contains no primitive idempotent $e_u$ of $R$, but that $I_0:=I\cap R\not=0$.  Set
${\cal V}=\{v\in {\cal P}\mid I_0e_v\not=0\}$.

 Then,   there is a complete triangular admissible $R$-module $X$ such that   $\underline{\cal A}^X:=({\cal A}^X,I^X)$ is a triangular interlaced weak ditalgebra
  with
   triangular layer $(S,W^X)$, where $S$ is a minimal algebra of the form  $$S=\left[\prod_{w\in J''}kf_w\right]\times \prod_{u\in {\cal P}\setminus {\cal V}}Re_u,$$
where $\{f_w\}_{w\in J''}$ is a new finite family of primitive idempotents of $S$. 
   Moreover, the associated functor 
$F^X:\underline{\cal A}^X\g\Mod\rightmap{}\underline{\cal A}\g\Mod$ is regular full and faithful length-controlling $d$-dense, for every $d\in \hueca{N}$.
\end{lemma}

\begin{proof} This proof is similar to the proof of
\cite[(7.11)(case 2)]{bpsqh}. 
 Set  $e=\sum_{v\in {\cal V}}e_v$. Then, we have  $I_0\subseteq Re$ and the finite-dimensional quotient $k$-algebra $Re/I_0\cong \prod_{v\in {\cal V}}Re_v/I_0e_v$ has finite representation type. Consider a complete family of representatives $\{Z_w\}_{w\in J''}$ of the isoclasses of indecomposable $Re/I_0$-modules and set $Z:=\bigoplus_{w\in J''}Z_w$. Then, we have the $R$-module $X:=Z\oplus R(1-e)$ and the splitting $\End_R(X)^{op}=S\oplus P$, with  $P=\rad\End_R(Z)^{op}$ and $S=\left[\prod_{w\in J''}kf_w\right]\times R(1-e)$, where $f_w\in \End_R(X)^{op}$ is the idempotent associated to the direct summand $Z_w$. 

As in the proof of the last lemma, apply \cite[(6.7)]{bpsqh} to the triangular admissible $R$-module $X$ to obtain the triangular interlaced weak ditalgebra $\underline{\cal A}^X$ and the functor $F^X:\underline{\cal A}^X\g\Mod\rightmap{}\underline{\cal A}\g\Mod$. 
From \cite[(6.9)]{bpsqh} and 
(\ref{R: F  y F0})(1), $X$ is complete and $F^X$ is full and faithful. 
From  (\ref{P: X-reduccion y bimods}), we already know  that $F^X:\underline{\cal A}^X\g\Mod\rightmap{}\underline{\cal A}\g\Mod$ is a regular 
length-controlling, full and faithful functor.
 
\medskip
\noindent(1): We will first show that $(F^X)_E:\underline{\cal A}^X\g E\g \Mod_p\rightmap{}\underline{\cal A}\g E\g \Mod_p$ is a $d$-dense functor for every division $k$-algebra $E$.
From (\ref{L: F0X ddenso -> FX ddenso})(2), we know that it will be enough to show that   
 $(F_0^X)_E:\underline{\cal A}_0^X\g E\g \Mod_p\rightmap{}\underline{\cal A}_0\g E\g \Mod_p$ is a $d$-dense functor for every division $k$-algebra $E$. Here, we have $\underline{\cal A}_0=((R,0),I_0)$ and $\underline{\cal A}_0^X$ has layer $(S,P^*)$.  
 
 Take any division $k$-algebra $E$ and $M\in \underline{\cal A}_0\g E\g \Mod_p$ with $\ell_E(M)=d$. So, $M$ is an $R\g E$-bimodule annihilated by $I_0$. Hence, we have a decomposition of $R\g E$-bimodules $M=eM\oplus (1-e)M$, where $eM$ is an $Re/I_0\g E$-bimodule. Following Crawley-Boevey's argument in \cite[(2.11)]{CB2}, see also \cite[(25.9)]{BSZ}, we obtain that  $eM\cong \oplus_{w\in J''}Z_w\otimes_kV_w$, for some finite-dimensional right $E$-vector spaces $V_w$.

Notice that, for $w\in J''$, we have the simple $S$-module $Sf_w$ and an isomorphism of $R$-modules $F^X(S_{f_w})=X\otimes_SSf_w\cong Xf_w\cong Z_w$. Then,  if we consider the
$S$-module $N:=\left[\bigoplus_{w\in J''}Sf_w\otimes_k V_w\right]\bigoplus (1-e)M$,   we have an isomorphism of $R$-modules $F_0^X(N)\cong eM\bigoplus (1-e)M=M$.  Using \cite[(6.7)]{bpsqh}, we know that $F_0^X(\overline{N})\cong M$, for some  $\overline{N}\in \underline{\cal A}_0\g\Mod$. 
Finally, we obtain what we wanted using (\ref{L: densidad de F^X}).
\medskip

\noindent(2): Now  we show that in fact $F^X_E:\underline{\cal A}^X\g E\g\Mod\rightmap{}\underline{\cal A}\g E\g\Mod$ is an equivalence, for any $k$-algebra $E$. For this, by the argument given in the proof of (\ref{L: F0X ddenso -> FX ddenso})(2), it will be enough to show that $(F_0^X)_E:\underline{\cal A}_0^X\g E\g\Mod\rightmap{}\underline{\cal A}_0\g E\g\Mod$ is an equivalence. 
Take any $M\in \underline{\cal A}_0\g E\g\Mod$.

 From (\ref{L: F^E}), it will be enough to show that there is some $N\in \underline{\cal A}_0^X\g\Mod$ such that $F_0^X(N)\cong M$ in $\underline{\cal A}_0\g \Mod$. 
  We have $M=eM\oplus (1-e)M$, where $eM$ is an $Re/I_0$-module. Hence, 
from \cite{Ausl}, 
 $eM\cong \oplus_{w\in J''}Z_w\otimes_k V_w$ for some (possibly infinite-dimensional) $k$-vector spaces $V_w$. Consider the $S$-module $N:=[\bigoplus_{w\in J''}Sf_w\otimes_k V_w]\oplus (1-e)M$ and notice that $F_0^X(N)\cong M$. 
\end{proof}

\begin{remark} In the context of the last lemma, if we futhermore assume that 
$I\subseteq I_0\oplus  \langle W_0\rangle$,
where $\langle W_0\rangle$ is the ideal of $A$ generated by $W_0$, then we obtain that   $I^X\cap S=0$.

Indeed,  since $I_0X=0$, we have $\sigma_{\nu,x}(r)=\nu(r x)=0$, for $r\in I_0$, $\nu\in X^*$, and $x\in X$. 
By assumption, any $h\in I$ has the form $h=r+h'$, where $r\in I_0$ and $h'\in \langle W_0\rangle$. Hence $\sigma_{\nu,x}(h)=\sigma_{\nu,x}(h')\in \langle W_0^X\rangle$. Thus, $I^X\cap S=0$. 

The functor $F^X$ constructed in the preceding lemma appears in the sequence of functors in the next Theorem (\ref{T: parametrización de módulos de dim acotada}), where the construction is applied to a special type of interlaced weak ditalgebra  satisfying the additional requirement of the preceding remark. 
\end{remark}

\begin{remark}\label{R: introd to minimal ditalgebras} The minimal ditalgebras 
 appearing in the next theorem are, by definition, ditalgebras ${\cal B}$ with triangular layer $(R,W)$, where $R$ is a minimal algebra, $W_0=0$, and $W_1$ is freely generated by a finite directed subset, see \cite[(23.2)]{BSZ}. Their module category is tame and well understood. A crucial fact in the proof of Drozd's theorem on the tame-wild dichotomy for finite-dimensional algebras in \cite{D} is that, for any seminested non-wild ditalgebra ${\cal A}$ and any dimension $d$, there is a composition of reduction functors $F:{\cal B}\g\Mod\rightmap{}{\cal A}\g\Mod$, where ${\cal B}$ is a minimal ditalgebra such that any $M\in {\cal A}\g\Mod$ with $\dim M\leq d$ is of the form $M\cong F(N)$, for some $N\in {\cal B}\g\Mod$. This is proved in \cite[\S8]{BDZZ} using bocses, and in \cite[\S28]{BSZ} using the equivalent language of ditalgebras. Using  bocses, almost the same statement was proved insightfully in \cite{CB1}, among other important results, but using a finite number of minimal ditalgebras. This statement was later generalized to the context of bimodules, see \cite[(2.13)]{CB2} and \cite[(28.22)]{BSZ}. The mentioned facts play a crucial role in the construction of the family of functors in the next theorem. 
\end{remark}

\begin{remark}\label{R: Fz and Fzominus} In the following, for the proof of (\ref{T: parametrización de módulos de dim acotada}), given  triangular interlaced weak ditalgebra $\underline{\cal A}=({\cal A},I)$ with a multiple source $\Omega$, as in \cite[(8.1)]{bpsqh}  we will consider the associated multiple source idempotent  $e_\Omega:=\sum_{w\in \Omega}e_w$, the triangular interlaced weak ditalgebra $\underline{\cal A}^{\ominus}$ given by the supression of $e_\Omega$, and the corresponding restriction functor $\Res:\underline{\cal A}\g\Mod\rightmap{}\underline{\cal A}^\ominus\g\Mod$, as in 
\cite[(8.2)]{bpsqh}. 

In this context, we studied in \cite[(8.5)--(8.9)]{bpsqh}     
 some special kind of reductions $\underline{\cal A}\longmapsto \underline{\cal A}^z$ of type $z\in \{d,q,a,r,X\}$.  
For each one of these cases, we have seen  that whenever we can perform a reduction on $\underline{\cal A}^{\ominus}$ of type $z$, then we can perform a corresponding reduction of the same type $z$  on $\underline{\cal A}$. Moreover,  in each one of these cases, we can identify  $\underline{\cal A}^{z\ominus}$ with $\underline{\cal A}^{\ominus z}$ and we get  a  commutative square  of functors 
$$\begin{matrix}
\underline{\cal A}^z\g\Mod&\rightmap{F^z}&\underline{\cal A}\g\Mod\\
\shortlmapdown{\Res}&&\shortrmapdown{\Res}\\
\underline{\cal A}^{z\ominus}\g\Mod&\rightmap{F^{\ominus z}}&\underline{\cal A}^{\ominus}\g\Mod,\\
\end{matrix}$$
where $F^z$ and $F^{\ominus z}$ denote the corresponding associated functors. Moreover, whenever $M\in \underline{\cal A}\g\Mod$ is such that $\Res(M)\cong F^{\ominus z}(N')$, for some $N'\in \underline{\cal A}^{z\ominus}\g\Mod$, we have that  $M\cong F^z(N)$, for some $N\in \underline{\cal A}^z\g\Mod$. 
\end{remark}

\begin{lemma}\label{L: FzE y Fzominus E} In the context of the last remark, for any type  $z\in\{d,q,a,r,X\}$ and 
any $k$-algebra $E$, we have an induced commutative diagram 
$$\begin{matrix}
\underline{\cal A}^z\g E\g\Mod&\rightmap{(F^z)_E}&\underline{\cal A}\g E\g\Mod\\
\shortlmapdown{\Res_E}&&\shortrmapdown{\Res_E}\\
\underline{\cal A}^{z\ominus}\g E\g\Mod&\rightmap{(F^{\ominus z})_E}&\underline{\cal A}^{\ominus}\g E\g\Mod.\\
\end{matrix}$$
Moreover, 
\begin{enumerate}
\item If $M\in \underline{\cal A}\g E\g\Mod$ is such that $\Res_E(M)\cong (F^{\ominus z})_E(N')$, for some $N'\in \underline{\cal A}^{z\ominus}\g E\g\Mod$, we have that  $M\cong (F^z)_E(N)$, for some $N\in \underline{\cal A}^z\g E\g\Mod$. 

\item If $E$ is a division $k$-algebra and $M\in \underline{\cal A}\g E\g\Mod_p$
is such that $\Res_E(M)\cong (F^{\ominus z})_E(N')$, for some $N'\in \underline{\cal A}^{z\ominus}\g E\g\Mod_p$, we have that  $M\cong (F^z)_E(N)$, for some $N\in \underline{\cal A}^z\g E\g\Mod_p$. 
\end{enumerate}
\end{lemma}

\begin{proof} In case $z=X$, according to \cite[(8.9)]{bpsqh}, we have that $F^{\ominus z}$ is the functor associated to the complete triangular admissible $B^{\ominus}$-module $X^{\ominus}$ and   $F^z$ is the functor associated to the complete triangular admissible $B$-module $X=Re_\Omega\oplus X^{\ominus}$. We have the corresponding splittings $\End_{B^{\ominus}}(X^\ominus)^{op}=S^\ominus\oplus P^\ominus$ and $\End_{B}(X)^{op}=S \oplus P$, where $S= Re_\Omega\times S^\ominus$. 
\medskip

\noindent (1): From (\ref{L: F^E}), we know that in order to find $N\in \underline{\cal A}\g E\g\Mod$ such that $(F^z)_E(N)\cong M$ in $\underline{\cal A}\g E\g\Mod$, it is enough to find $N\in \underline{\cal A}\g\Mod$ with $F^z(N)\cong M$ in $\underline{\cal A}\g\Mod$: this we have by (\ref{R: Fz and Fzominus}). 

\medskip
\noindent(2): Assume that $M\in \underline{\cal A}\g E\g\Mod_p$
is such that $\Res_E(M)\cong (F^{\ominus z})_E(N')$, for some $N'\in \underline{\cal A}^{z\ominus}\g E\g\Mod_p$. By (\ref{L: densidad de F^X}), we need to exhibit an $S\g E$-bimodule $N$ such that $X\otimes_SN\cong M$ in $B\g E\g \Mod$. Here, we can take $N=e_\Omega M\oplus N'$, which satisfies that 
$X\otimes_SN\cong [Re_\Omega\otimes_{Re_\Omega}
e_\Omega M]\oplus [X^\ominus\otimes_{S^\ominus} N']=e_\Omega M \oplus (1-e_\Omega)M=M$, as $B\g E$-bimodules.      
 
 In case $z\in \{a,r,q\}$, we already know from (\ref{L: FdE, FaE, FqE}) and (\ref{P: regularization and bimods}), that $F^z$ is a regular equivalence, so our claim is obvious.
Finally, in case $z=d$, from (\ref{L: funtores restrinccion y longitudes}) the induced functors
$$(F^d)_E : \underline{\cal A}^d\g E\g\Mod\rightmap{}\underline{\cal A}\g E\g\Mod \hbox{ \  and \ } (F^d)_E: \underline{\cal A}^d\g E\g \Mod_p\rightmap{}\underline{\cal A}\g E\Mod_p,$$
have image consisting of the objects  annihilated by the deleted indempotent. Clearly, for any  $\underline{\cal A}\g E$-bimodule $M$, it is annihilated by the deleted idempotent if  $\Res(M)$ is annihilated by the deleted idempotent. So, our claim in this case  follows immediately.
\end{proof}

\begin{definition} An  \emph{elementary reduction functor} $F^z:\underline{\cal A}^z\g\Mod\rightmap{}\underline{\cal A}\g\Mod$ of type $z\in \{d,q,a,r,X\}$ is one of the functors considered in (\ref{L: FdE, FaE, FqE}), (\ref{P: regularization and bimods}), or (\ref{P: X-reduccion y bimods}). In the following, we only use functors of type $X$ of the following special forms:  functors associated to edge-reductions as in (\ref{L: edge-reductions}), to  multiple $d$-unravellings as in (\ref{L: d-unravellings}),  and to   ideal-reductions as in (\ref{L: ideal-reduction}). By definition, a \emph{reduction functor} is any finite composition of elementary reduction functors. Notice that any composition of regular length-controlling $d$-dense reduction functors is a regular $d$-dense reduction functor. 

Whenever $\underline{\cal A}=({\cal A},I)$ is  an interlaced weak ditalgebra with $I=0$, the underlying weak ditalgebra ${\cal A}$ is in fact a ditalgebra and $\underline{\cal A}\g\Mod={\cal A}\g\Mod$, so we will often omit the underlining and refer to it indistinctly as $\underline{\cal A}$ or ${\cal A}$.    
\end{definition}

 The proof of the following theorem relies on the properties of elementary reduction functors studied before, which include the type of functors appearing in the reduction of non-wild seminested ditalgebras to minimal ditalgebras, see \cite[(28.22)]{BSZ} and \cite[(9.2)]{bpsqh}.  Let us note that the statement of  \cite[(9.2)]{bpsqh} must be corrected by removing the word ``almost''.

\begin{theorem}\label{T: parametrización de módulos de dim acotada}
  Let ${\cal P}$ be a finite preordered set.
 Assume $\underline{\cal A}=({\cal A},I)$ is a ${\cal P}$-oriented triangular interlaced weak ditalgebra as in
(\ref{D: biquiver P-orientado})\&(\ref{D: triangular interlaced weak ditalg}), where $I$ is an
 ideal of $A$ contained in the radical of $A$.
Suppose that $\underline{\cal A}$ is not wild and take $d\in \hueca{N}$. 
Then, there is a regular $d$-dense reduction functor $F:{\cal B}\g\Mod\rightmap{}\underline{\cal A}\g\Mod$, where ${\cal B}$ is minimal ditalgebra (equipped with the trivial ideal).  
\end{theorem}

\begin{proof} This proof is an adaptation of the proof of \cite[(9.2)]{bpsqh}  and is based on the construction given there. The proof proceeds by induction on the cardinality of 
${\cal P}$.  If  $\vert {\cal P}\vert=1$, then ${\cal A}$ is a minimal ditalgebra (with $I=0$), and we are done.  Consider the multiple source 
$\Omega\subseteq {\cal P}$ constructed in the mentioned proof, and the ${\cal P}^\ominus$-oriented non-wild triangular interlaced weak ditalgebra $\underline{\cal A}^{\ominus}=(\underline{\cal A}^{\ominus},I^{\ominus})$ obtained by supression of the multiple source idempotent $e_\Omega$, which satisfies that $I^\ominus\subseteq \rad A^{\ominus}$. Here, ${\cal P}^\ominus={\cal P}\setminus \Omega$, where we may assume $\Omega\subset {\cal P}$.  So, by the induction hypothesis, for a fixed $d\in \hueca{N}$, 
 to obtain a finite sequence
of reductions
$$\underline{\cal A}^\ominus \mapsto \underline{\cal A}^{\ominus  z_1}
\mapsto
\cdots\mapsto
\underline{\cal A}^{\ominus  z_1z_2\cdots z_t}$$
of type $z_i\in \{a,d,r,q,X\}$ such that
$\underline{\cal A}^{\ominus  z_1z_2\cdots z_t}$ is a
minimal ditalgebra (with  $I^{\ominus  z_1z_2\cdots z_t}=0$), and the reduction functor 
$ F^{\ominus z_1}\cdots F^{\ominus z_t}$ is regular $d$-dense.

As remarked in (\ref{R: Fz and Fzominus}), 
we can perform a corresponding finite sequence of reductions $\underline{\cal A}\mapsto \underline{\cal A}^{z_1}\mapsto
\cdots\mapsto
\underline{\cal A}^{z_1z_2\cdots z_t}$ 
of the same type $z_i\in \{a,d,r,q,X\}$ as those applied successively to $\underline{\cal A}^\ominus$ in order to obtain 
$\underline{\cal A}^{\ominus  z_1\cdots z_t}=\underline{\cal A}^{z_1\cdots z_t\ominus }
$,  and there is a commutative
diagram
$$\begin{matrix}\underline{\cal
A}^{z_1\cdots z_t}\g\Mod&\rightmap{F^{z_t}}
&\cdots&\rightmap{F^{z_1}}&\underline{\cal A}\g\Mod\\
\shortlmapdown{\Res}&&&&\shortlmapdown{\Res}\\
  \underline{\cal A}^{z_1\cdots z_t\ominus }\g\Mod&\rightmap{F^{\ominus
z_t}}&\cdots&\rightmap{F^{\ominus  z_1}}&
  \underline{\cal A}^\ominus\g\Mod
  \end{matrix}$$
where $\underline{\cal A}^{z_1\cdots z_t\ominus }$  is a minimal ditalgebra (with ideal 
$I^{z_1\cdots z_t\ominus }=0$). Moreover, an easy induction shows that, for any $k$-algebra $E$, the induced commutative diagram 
$$\begin{matrix}\underline{\cal
A}^{z_1\cdots z_t}\g E\g\Mod&\rightmap{(F^{z_t})_E}
&\cdots&\rightmap{(F^{z_1})_E}&\underline{\cal A}\g E\g\Mod\\
\shortlmapdown{\Res_E}&&&&\shortlmapdown{\Res_E}\\
  \underline{\cal A}^{z_1\cdots z_t\ominus }\g E\g\Mod&\rightmap{(F^{\ominus
z_t})_E}&\cdots&\rightmap{(F^{\ominus  z_1})_E}&
  \underline{\cal A}^\ominus\g E\g\Mod
  \end{matrix}$$
has the same properties of the diagram in (\ref{L: FzE y Fzominus E}), if we replace the elementary reduction functor $F^z$ appearing there by the reduction functor $F^{z_1}\cdots F^{z_t}$. 
Since $\ell_E(\Res(M))\leq \ell_E(M)$ for any $\underline{\cal A}\g E$-bimodule $M$, we derive from this that  the reduction functor 
$F^{z_1}\cdots F^{z_t}$ is regular $d$-dense. 

 Now, we concentrate our attention on the non-wild triangular interlaced weak ditalgebra $\underline{\cal A}'= \underline{\cal A}^{z_1z_2\cdots z_t}$, see (\ref{P: Reducciones vs wildness}) having in mind that $\underline{\cal A}$ is a Roiter interlaced weak ditalgebra, as remarked in \cite[(5.4)]{bpsqh}. We know that $\underline{\cal A}'$ is  a stellar triangular interlaced weak ditalgebra due to the fact that $\underline{\cal A}^{\ominus z_1z_2\cdots z_t}$ is a minimal ditalgebra, see again the proof of \cite[(9.2)]{bpsqh}. 
 So, we can apply the construction of the proof of   \cite[(7.11)]{bpsqh} to the same $d$, to obtain a
composition of elementary reduction functors
$$\begin{matrix}\underline{\cal
A}''\g\Mod&\rightmap{F^{y_s}}&\cdots&\rightmap{F^{y_1}}&\underline{\cal
A}'\g\Mod\\
  \end{matrix}$$
such that $\underline{\cal A}''=\underline{\cal A}^{\prime y_1\cdots y_s}$ is a non-wild seminested ditalgebra (with zero ideal $I''$). Here, the composition functor $F^{y_1}\cdots F^{y_s}$ is regular length-controlling $d$-dense due to the fact that each one of these elementary reduction functors $F^{y_i}$ is a regular length-controlling  $d$-dense functor, due to (\ref{L: FdE, FaE, FqE}), (\ref{L: d-unravellings}), and (\ref{L: ideal-reduction}).  

Finally,   from \cite[(28.22)]{BSZ} applied to the same $d$, there is a minimal ditalgebra ${\cal B}$ and a
composition of elementary reduction functors $G:{\cal B}\g\Mod\rightmap{}{\cal A}''\g\Mod$, which is a regular $d$-dense functor. The reduction functors of type $X$ used in  \cite{BSZ} are either edge reductions or unravellings, which are included in our list of elementary reduction functors. 
Then,  the composition 
 $$F:=F^{z_1}\cdots F^{z_t}F^{y_1}\cdots F^{y_s}G:{\cal B}\g\Mod\rightmap{}\underline{\cal A}\g\Mod$$
 is the wanted regular $d$-dense reduction functor. 
\end{proof}

\begin{remark}\label{D: los Qi's}
 If ${\cal B}$ is a minimal ditalgebra with layer $(R,W)$, we have the decomposition of the unit element of $R$ as a sum $1=\sum_{i=1}^se_i$ of primitive orthogonal idempotents. Here, $B=R$ is   the underlying algebra of degree zero elements of the tensor algebra of ${\cal B}$. If $Be_i\not=k$, we denote by $Q_i$ the field of fractions of the rational $k$-algebra $Be_i$, considered as a $B$-module. So, $Q_i$ has a natural structure of $B\g k(x)$-bimodule. We identify  this bimodule  with   $Be_i\otimes_{e_iB}k(x)$.  
 
 From \cite[(31.6)]{BSZ}, we know that $\End_{\cal B}(Q_i)^{op} = E^0_{Q_i} \bigoplus \rad \End_{\cal B}(Q_i)^{op}$, where $E^0_{Q_i}$ is the subalgebra formed by the elements 
$(\mu_r,0)$, where $\mu_r$ denotes multiplication by  $r\in k(x)$, and the elements of   $\rad\End_{\cal B}(Q_i)^{op}$ have the form $f= (0,f^1)$. So,  $Q_i$ is a proper ${\cal B}\g E^0_{Q_i}$-bimodule.  We will keep this notation in the following. 
\end{remark}

\begin{theorem}\label{T: pregens de underline(A)} Assume that 
  $\underline{\cal A}=({\cal A},I)$ is a ${\cal P}$-oriented  triangular interlaced weak ditalgebra, as in   (\ref{D: biquiver P-orientado})\&(\ref{D: triangular interlaced weak ditalg}), where $I\subseteq \rad A$.  Suppose that $\underline{\cal A}$ is not wild and take $d\in \hueca{N}$. Consider the reduction  functor
$F : {\cal B}\g \Mod \rightmap{} \underline{\cal A}\g\Mod$ of  (\ref{T: parametrización de módulos de dim acotada}), where ${\cal B}$ is a minimal ditalgebra.  The following holds:   
\begin{enumerate}
 \item The $\underline{A}\g B$-bimodule $\overline{T}:=F(B)$ is finitely generated by the right and  the composition 
$$B\g\Mod\rightmap{L_{\cal B}}{\cal B}\g\Mod\rightmap{F}\underline{\cal A}\g\Mod$$ is naturally isomorphic to $L_{\underline{\cal A}}(\overline{T}\otimes_B-)$.
 So, $F(N)\cong \overline{T}\otimes_B N$, for any $N\in {\cal B}\g\Mod$. If $N$ is a proper ${\cal B}\g E$-bimodule, for some $k$-algebra $E$, then we have an isomorphism of $\underline{A}\g E$-bimodules $F_E(N)\cong \overline{T}\otimes_B(N_E)$.
 \item The $\underline{\cal A}$-modules of the form $G\cong \overline{T}\otimes_BQ_i$, for $i\in [1,s]$, are pregeneric.

\item If $M$ is an indecomposable $\underline{\cal A}$-module with  $\Endol{M}\leq  d$, there is an indecomposable ${\cal B}$-module $N$ with finite endolength such that $\overline{T}\otimes_B N\cong M$.

 \item If the given $\underline{\cal A}$-module $M$ is pregeneric, then the $B$-module $N$ is pregeneric and  has a natural structure of a $B\g k(x)$-bimodule. Moreover, for the $\underline{\cal A}$-module $\underline{M}:=\overline{T}\otimes_BN$, we have a decomposition 
$$\End_{\underline{\cal A}}(\underline{M})^{op}=E^0_{\underline{M}}\bigoplus \rad\End_{\underline{\cal A}}(\underline{M})^{op},$$
for some subalgebra $E^0_{\underline{M}}$ and the ideal $\rad\End_{\underline{\cal A}}(\underline{M})^{op}$ is nilpotent.
   There is an isomorphism  of algebras 
$\rho  : k(x)\rightmap{}E_{\underline{M}}^0$ given by $\rho(r) = (id_{\overline{T}} \otimes \mu_r,0)$, where $\mu_r$ denotes multiplication by  $r$, which endows $\underline{M}$  with a structure of an  $\underline{A}\g k(x)$-bimodule such that $$\Endol{M} =\Endol{\underline{M}}= \dim_{k(x)}\underline{M}.$$
\end{enumerate}
\end{theorem}

\begin{proof} $(1)$ follows from 
\cite[(6.13)]{bpsqh}, where we can replace the functors $F_\phi$ by the functors $F'$ as in (\ref{P: la nueva}). For $N=Q_i$ and $E=E_{Q_i}^0$,  we have  isomorphisms $\psi:F(Q_i)\rightmap{}F(B\otimes_BQ_i)$ and $\phi:  F(B \otimes_BQ_i)\rightmap{}F(B) \otimes_B Q_i$ of $\underline{A}$-modules such that, for any $r \in k(x)$, the following diagram commutes in $\underline{\cal A}\g\Mod$
$$\begin{matrix}
F(Q_i)&\rightmap{(\psi,0)}&F(B\otimes_BQ_i)& \rightmap{(\phi,0)}& F(B)\otimes_BQ_i\\
\shortrmapdown{F(\mu_r,0)}&&\shortrmapdown{F(1_B \otimes \mu_r ,0)}&&\shortrmapdown{(1_{F(B)} \otimes \mu_r ,0)}\\
F(Q_i)&\rightmap{(\psi,0)}& F(B\otimes_BQ_i) &\rightmap{(\phi,0)}& F(B)\otimes_BQ_i
\end{matrix}$$ 
 The proof of (2) is similar to the proof of \cite[(4.3)]{CB2}: As remarked in (\ref{D: los Qi's}), we have the  decomposition
$$\End_{{\cal B}}(Q_i)^{op}=E^0_{Q_i}\bigoplus\rad\End_{{\cal B}}(Q_i)^{op},$$
and $Q_i$ is a proper ${\cal B}\g E^0_{Q_i}$-bimodule. So, $F(Q_i)$ is a proper $\underline{\cal A}\g E^0_{Q_i}$-bimodule, which is isomorphic to the proper $\underline{\cal A}\g E^0_{Q_i}$-bimodule $F(B)\otimes_BQ_i$ through $(\phi\psi,0)$. 
Since the functor $F$ is full and faithful, we have isomorphisms of algebras 
$$\End_{\cal B}(Q_i)^{op}\rightmap{F_\vert}\End_{\underline{\cal A}}(F(Q_i))^{op}\rightmap{}\End_{\underline{\cal A}}(F(B)\otimes_B Q_i)^{op},$$
where the second isomorphism is conjugation by $(\phi\psi,0)$. 
For $u=(\mu_r,0)\in E^0_{Q_i}$ and $t\otimes n\in F(B)\otimes_BQ_i$, we have $(t\otimes n)u=t\otimes \mu_r(n)$. Therefore, we have a decomposition 
$$\hbox{(*): \hbox{ \ \ \ \ \ \ }}\End_{\underline{\cal A}}(F(B)\otimes_B Q_i)^{op}=E^0_{F(B)\otimes_BQ_i}\bigoplus 
\rad\End_{\underline{\cal A}}(F(B)\otimes_B Q_i)^{op}$$
where $E^0_{F(B)\otimes_BQ_i}\cong E^0_{Q_i}\cong k(x)$. By definition, the endolength of $F(B)\otimes_BQ_i$ is its length as an $\End_{\underline{\cal A}}(F(B)\otimes_BQ_i)^{op}$-module where the action  is given by  $(t\otimes n)(u^0,u^1)=(t\otimes n)u^0$. Hence, we have that $$\Endol{F(B)\otimes_B Q_i}=\dim_{k(x)}F(B)\otimes_B Q_i,$$
 where $k(x)$ acts on $F(B)\otimes_BQ_i$ through the isomorphism $\rho:k(x)\rightmap{}E^0_{F(B)\otimes_BQ_i}$ given by $\rho(r)=(1_{F(B)}\otimes \mu_r,0)$, for $r\in k(x)$. 
Since $F_{k(x)}$ is length-controlling,  we obtain that $F(B)\otimes_BQ_i$ is pregeneric. 
\medskip

\noindent(3):  Here, $M$ is an $\underline{\cal A}\g E$-bimodule with  length $\ell_E(M)\leq d$, where 
$E = \End_{\underline{\cal A}}(M)^{op}$ and its bimodule structure map 
$\alpha_M : E \rightmap{}
\End_{\underline{\cal A}}(M)^{op}$ is the identity.
From (\ref{T: parametrización de módulos de dim acotada}), we know that there is a ${\cal B}\g E$-bimodule  $N$ with finite length $\ell_E(N)$ such that 
$F_E(N)\cong M$. 

Since the functor $F$ is full and faithful, we get 
 $E\cong \End_{\cal B}(N)^{op}$ and, hence, $\Endol{N}=\ell_E(N)$ is finite.  By \cite[(31.6)]{BSZ}, the module $N$ over the minimal algebra $B$ has finite endolength. 
If $M$ is finite-dimensional, so is $N$. 
\medskip

\noindent(4): If $M$ is infinite-dimensional, $N$ has the same property, so, $N$ is a pregeneric $B$-module.    
 It is known that the $B$-modules $Q_i$ provide a complete set of representatives of the isomorphism  classes of the  pregeneric
$B$-modules, see \cite[(31.3)]{BSZ}. So, we can assume that $N\cong Q_i$, for some $i$.  From the decomposition in (*), we get the description of the action of $k(x)$ on $\underline{M}$ and the wanted last equality.  
\end{proof}

\noindent{\bf Proof of (\ref{T: cal(A) pregen tame sii tame})} We know that $\underline{\cal A}$ is tame iff it is not wild, by \cite[(10.4)]{bpsqh}. From (\ref{P: gen tame => non wild}), we know that if $\underline{\cal A}$ is pregenerically tame, then it is not wild. So, it remains to show that if $\underline{\cal A}$ is not wild, then it is pregenerically tame.
So, take $d\in \hueca{N}$ and a pregeneric $\underline{\cal A}$-module $G$ with $\Endol{G}\leq d$. Apply (\ref{T: pregens de underline(A)}) to $d$ to obtain a reduction functor $F:{\cal B}\g \Mod\rightmap{}\underline{\cal A}\g\Mod$  and a pregeneric $B$-module $G'$ with $F(G')\cong G$.  Since there are only finitely many pregeneric $B$-modules $G'$, up to isomorphism, it follows that $\underline{\cal A}$ is pregenerically tame. 
$\hfill\square$

\section{On the category  of modules for a special $\underline{\cal A}$} 

In this section, we consider  a \emph{special triangular interlaced weak ditalgebra $\underline{\cal A} = ({\cal A}, I)$}, as defined in \cite[\S12]{hsb}. That is such that its layer $(S,W)$ satisfies: $S$ is a finite product of copies of the field $k$, $W$ is
finite-dimensional, and so is $\underline{A}=A/I$, where $A=T_S(W_0)$. Thus, $\underline{\cal A}$ is a Roiter interlaced weak ditalgebra. 

It is known that, in this case, the category $\underline{\cal A}\g\Mod$ admits an exact structure ${\cal E}$ where the conflations are the composable pairs of morphisms $M\rightmap{f}E\rightmap{g}N$ such that $gf=0$ and the sequence 
$0\rightmap{}M\rightmap{f^0}E\rightmap{g^0}N\rightmap{}0$
is exact in $S\g\Mod$, see \cite[\S11]{hsb}.

The \emph{right algebra of $\underline{\cal A}$} is the finite-dimensionl $k$-algebra
$\Gamma := \End_{\underline{\cal A}}(\underline{A})^{op}$.
The composition
$
\underline{A}\rightmap{}\End_{\underline{A}}(\underline{A})^{op}\rightmap{}\End_{\underline{\cal A}}(A)^{op}=\Gamma$ is an embedding of $k$-algebras,  
so $\Gamma$ is naturally an $\underline{A}$-algebra. The functor $H:\underline{\cal A}\g\Mod\rightmap{} \Gamma\g\Mod$, given by  $H=\Hom_{\underline{\cal A}}(\underline{A},-)$ is an exact functor, because $\underline{A}$ is ${\cal E}$-projective,  see \cite{BuBt} and \cite[\S12]{hsb}. In this section we explore some other properties of the functor $H$. For a conceptual approach to the proof of these properties and more,   the notion of compactness  defined below   is helpful.

\begin{remark}\label{R: compacts} Let ${\cal C}$ be an additive  category with arbitrary coproducts. Thus, any family $\{M_i\}_{i\in I}$ of objects in ${\cal C}$ has a coproduct $\coprod_{i\in I}M_i$ equipped with its family of  injections $\sigma_{M_j}:M_j\rightmap{}\coprod_{i\in I}M_i$, for $j\in I$. The universal property of the coproduct determines, for every $j\in I$, a morphism $\pi_{M_j}:\coprod_{i\in I}M_i\rightmap{}M_j$ such that $\pi_{M_j}\sigma_{M_j}=id_{M_j}$ and $\pi_{M_i}\sigma_{M_j}=0$ for any $i\not=j$. We call the morphisms $\pi_{M_j}$ \emph{the projections of the coproduct}.  Assume that the category ${\cal C}$ satisfies the following additional requirement: for any family of objects   $\{M_i\}_{i\in I}$ in ${\cal C}$ and any morphism $f:N\rightmap{}\coprod_{i\in I}M_i$ such  that $\pi_{M_i}f=0$, for all $i\in I$, we have that $f=0$. 

We say that an object $N$ of ${\cal C}$ is \emph{compact} iff for any family of objects $\{M_i\}_{i\in I}$ in ${\cal C}$ and any  morphism  $f:N\rightmap{}\coprod_{i\in I}M_i$ we have that $\pi_{M_i}f=0$, for almost every $i\in I$. In this case, we have 
$$f=\sum_{i\in I}\sigma_{M_i}\pi_{M_i}f.$$  
\end{remark}

\begin{remark}\label{R: coprod} The category $\underline{\cal A}\g \Mod$ has products and coproducts, see \cite[(3.7)]{bpsqh} and \cite[(2.9)]{BSZ}. Given a family $\{M_i\}_{i\in I}$ in $\underline{\cal A}\g\Mod$, their coproduct $\coprod_{i\in I}M_i$ in $\underline{\cal A}\g\Mod$ is the coproduct $\coprod_{i\in I}M_i$ in $\underline{A}\g\Mod$ with injections $\sigma_{M_j}:M_j\rightmap{}\coprod_{i\in I}M_i$ given by $\sigma_{M_j}=(\sigma^0_{M_j},0)$, where   $\sigma^0_{M_j}$ are the injections of the coproduct $\coprod_{i\in I}M_i$ in $\underline{A}\g\Mod$. 
The product $\prod_{i\in I}M_i$ of the given family  in $\underline{\cal A}\g\Mod$ is constructed similarly.  

The canonical morphism of $\underline{\cal A}$-modules  $\coprod_{i\in I}M_i\rightmap{}\prod_{i\in I}M_i$ is a monomorphism, which implies that the category ${\cal C}=\underline{\cal A}\g\Mod$ satisfies the properties required in (\ref{R: compacts}).

  Also, given a finite-dimensional algebra $\Lambda$, any  full and replete subcategory ${\cal C}$ of $\Lambda\g\Mod$ which is closed under coproducts has the properties required in (\ref{R: compacts}). This follows, from the fact that the canonical morphisms $\coprod_{i\in I}M_i\rightmap{}\prod_{i\in I}M_i$ in $\Lambda\g\Mod$ are monomorphisms. 
\end{remark}

\begin{lemma}\label{L: compacts in cal(A)-Mod} The compact objects of $\underline{\cal A}\g\Mod$ are the finite-dimensional objects. 
\end{lemma}

\begin{proof} We already know that the 
 finite-dimensional spaces are compact objects of $k\g\Mod$. Let $\{M_i\}_{i\in I}$ be a family of $\underline{\cal A}\g\Mod$ and $N$ a 
finite-dimensional $\underline{\cal A}$-module. Consider any morphism $f:N\rightmap{}\coprod_{i\in I}M_i$. So, $f$  has components 
$$f^0\in \Hom_k(N,\coprod_{i\in I}M_i)\hbox{ \  and \ } f^1\in \Hom_{A\g A}(V,\Hom_k(N,\coprod_{i\in I}M_i)).$$ 
In order to show that $\pi_{M_i}f=0$, for almost every $i\in I$, we will show that each one of 
$(\pi_{M_i}f)^0$
and $(\pi_{M_i}f)^1$ is zero for almost all $i$.  
Since $N$ is finite-dimensional,  
$(\pi_{M_i}f)^0=\pi^0_{M_i}f^0=0$, for almost all $i$.  
Since $V$ is also finite-dimensional, say with finite basis ${\cal V}$,  
we just have to show that for each basis element  $v\in {\cal V}$, we have that 
$(\pi_{M_i}f)^1
(v)$ is zero for almost all $i$. For such an element $v\in {\cal V}$, using again that  $N$ is finite-dimensional, we see that the linear map $f^1(v)$ satisfies that  
$(\pi_{M_i}f)^1
(v)=\pi^0_{M_i}f^1(v)=0$, for almost all $i$. 

For the converse, it is enough to notice that any compact object $N$ of $\underline{\cal A}\g\Mod$ is compact in $\underline{A}\g\Mod$, because, as is well known, the compact $\underline{A}$-modules are the finite-dimensional ones.  Indeed,  a morphism $f^0:N\rightmap{}\coprod_{i\in I}M_i$ in $\underline{A}\g\Mod$ determines the morphism $f=(f^0,0):N\rightmap{}\coprod_{i\in I}M_i$ in $\underline{\cal A}\g\Mod$, thus $\pi_{M_i}f=0$, for almost all $i$. Thus $\pi^0_{M_i}f^0=0$, for almost all $i$. 
\end{proof}

We will require the determination of the  compact objects for the following categories. 

\begin{definition}\label{R: compactos de filtrados}   
Let ${\cal S}$ be a collection of indecomposable modules in $\Lambda\g \mod$, where $\Lambda$ is any finite-dimensional algebra. Define $\widetilde{\cal F}({\cal S})$ as the full subcategory of $\Lambda\g\Mod$ formed by the $\Lambda$-modules $M$ with a finite ${\cal S}$-filtration, that is a filtration of the form  
  $$\hueca{F}:\hbox{ \ \ }
0=M_0\subset M_1\subset M_2\subset\cdots \subset M_\ell=M$$ 
such that each factor $M_i/M_{i-1}$ is isomorphic to some possibly infinite direct sum of objects in ${\cal S}$. 
\end{definition}

It follows from the definition that $\widetilde{\cal F}({\cal S})$ is closed under extensions.   

\begin{lemma}\label{L: compacts in cal(F)(Delta)} With the preceding notation, assume furthermore that $\widetilde{\cal F}({\cal S})$ is closed under coproducts. 
Then, the compact objects of $\widetilde{\cal F}(\cal S)$ are the finite-dimensional ones. 
\end{lemma}

\begin{proof} It is well known that if $M\in \widetilde{\cal F}({\cal S})$ is finite-dimensional, then $M$ is compact in $\Lambda\g\Mod$, hence it is so in $\widetilde{\cal F}(\cal S)$. For the converse, we notice first that, for any epimorphism $p:M\rightmap{}N$ in $\widetilde{\cal F}({\cal S})$ with $M$  a compact object of $\widetilde{\cal F}({\cal S})$, we also have that $N$ is  a compact object in $\widetilde{\cal F}({\cal S})$. It is immediate that any  object of $\widetilde{\cal F}({\cal S})$ of the form $\bigoplus_{i\in I}S_i$, with each $S_i\in {\cal S}$, is compact iff $I$ is finite. 

The proof of the proposition will follow by induction on the length $\ell$ of the filtration $\hueca{F}$ of the compact object  $M\in \widetilde{\cal F}({\cal S})$ described in the preceding definition. The base of the induction, when $\ell=1$, follows from the preceding paragraph. Now, assume that $\ell>1$ and suppose that any  compact object with an ${\cal S}$-filtration of length $\ell-1$ is finite-dimensional.  

From the filtration $\hueca{F}$ as above, we obtain the following filtration for $M/M_1$ with length $\ell-1$:  
$$0\subset M_2/M_1\subset\cdots\subset M/M_1$$
where each factor $(M_t/M_1)/(M_{t-1}/M_1)\cong M_t/M_{t-1}$ is a direct sum of  objects of ${\cal S}$. So, $M/M_1\in \widetilde{\cal F}({\cal S})$ and it is compact. Our induction hypothesis implies  that $M/M_1$ is finite-dimensional. Moreover, we have an isomorphism $\theta:M_1\rightmap{} \bigoplus_{j\in J}S_j$, with $S_j\in {\cal S}$. Consider the exact sequence 
$$\xi: \hbox{ \ \ \ }0\rightmap{}M_1\rightmap{s}M\rightmap{p}M/M_1\rightmap{}0,$$
and its equivalence class $[\xi]$  in $\Ext_\Lambda(M/M_1,M_1)$. The isomorphism $\theta$  induces an isomorphism 
$\Ext_\Lambda(M/M_1,M_1)\cong \Ext_\Lambda(M/M_1,\bigoplus_{j\in J}S_j)$ mapping $[\xi]$ on 
$[\theta\xi ]\in \Ext_\Lambda(M/M_1,\bigoplus_{j\in J}S_j)$. For each subset $J'$ of $J$, we have  a commutative diagram 
$$\begin{matrix}\Ext_\Lambda(M/M_1,\bigoplus_{j\in J}S_j)&\rightmap{\cong}&\bigoplus_{j\in J}\Ext(M/M_1,S_j)\\
\shortlmapdown{\pi_{J'}}&&\shortlmapdown{p_{J'}}\\
\Ext_\Lambda(M/M_1,\bigoplus_{j\in J'}S_j)&\rightmap{\cong}&\bigoplus_{j\in J'}\Ext(M/M_1,S_j),\\
\end{matrix}$$
 with $\pi_{J'}:=\Ext(M/M_1,q_{J'})$,  where $q_{J'}:\bigoplus_{j\in J}S_j\rightmap{}\bigoplus_{j\in J'}S_j$ and $p_{J'}$ are the corresponding canonical projections. 
For some cofinite subset $J'$ of $J$, we have that $[ q_{J'}\theta\xi]=\pi_{J'}([\theta\xi])=0$. 
 This means that the pushout sequence obtained from $\xi$ using $q_{J'}\theta $ splits. So, there is a morphism $t:M\rightmap{}\bigoplus_{j\in J'}S_j$ such that $q_{J'}\theta =ts$. Hence, $t$ is an epimorphism and $\bigoplus_{j\in J'}S_j$ is a compact object in $\widetilde{\cal F}({\cal S})$. Hence, $J'$ is finite, and cofinite in $J$, we get that $J$ is finite. So, $M_1$ is finite-dimensional, and so is $M$, as we wanted to show. 
\end{proof}

\begin{remark}\label{L: Hom(N,-) preserva coprods} Let ${\cal C}$ be an additive category with coproducts as required in (\ref{R: compacts}) and $N$ a compact object of ${\cal C}$. Define $\Gamma:=\End_{\cal C}(N)^{op}$. Then  
 the functor $\Hom_{\cal C}(N,-):{\cal C}\rightmap{}\Gamma\g\Mod$ preserves coproducts. More precisely, for any family $\{M_i\}_{i\in I}$ of objects in ${\cal C}$,  there is an isomorphism of $\Gamma$-modules 
$$\Hom_{\cal C}(N,\coprod_{i\in I}M_i)\rightmap{\Theta}\coprod_{i\in I}\Hom_{\cal C}(N,M_i)\hbox{ \ such that \ } 
f\longmapsto \sum_{i\in I}s_i(\pi_{M_i}f),$$ 
where $s_j:\Hom_{\cal C}(N,M_j)\rightmap{}\coprod_{i\in I}\Hom_{\cal C}(N,M_i)$ denotes the canonical injection in $\Gamma\g\Mod$. Indeed, the inverse of $\Theta$ is the map $\Theta'$ given by 
$\sum_{i\in I}s_i(f_i)\mapsto \sum_{i\in I}\sigma_{M_i}f_i$, where $f_i=0$, for almost all $i$.   From (\ref{R: compacts}), we immediately derive that $\Theta$ and $\Theta'$ are mutual inverses. It is  clear that $\Theta$ is a morphism of $\Gamma$-modules. 
\end{remark}

\begin{corollary}\label{C: H preserva coprods}\label{L: Hom(A,-) preserva sumas directas}  The functor $H=\Hom_{\underline{\cal A}}(\underline{A},-):\underline{\cal A}\g\Mod\rightmap{}\Gamma\g\Mod$ preserves coproducts, where again $\Gamma$ denotes  the right algebra of $\underline{\cal A}$.
\end{corollary}

\begin{proof} This follows from (\ref{R: coprod}) and (\ref{L: Hom(N,-) preserva coprods}), because $\underline{A}$ is finite-dimensional and, hence, compact by (\ref{L: compacts in cal(A)-Mod}).
\end{proof}

For the proof of the next proposition we work with the basic properties of the following  cokernel functor.

\begin{remark}\label{R: Coker}
Assume that $({\cal C},{\cal E})$ is an exact category. Let us denote by ${\cal P}({\cal C})$ the category of morphisms $f:P_2\rightmap{}P_1$ of ${\cal C}$ where $P_1$ and $P_2$ are ${\cal E}$-projective objects.   We denote by ${\cal P}^1({\cal C})$ the full subcategory of ${\cal P}({\cal C})$ formed by the morphisms $f:P_2\rightmap{}P_1$  such that $f=sp$, where $s:L\rightmap{}P_1$ is an inflation and $p:P_2\rightmap{}L$ is a deflation.

Observe that any $f:P_2\rightmap{}P_1$ in ${\cal P}^1({\cal C})$ has a cokernel in ${\cal C}$. Indeed, if $f=sp$ as before, there is a conflation $L\rightmap{s}P_1\rightmap{q}Q$ and it follows that $q$ is a cokernel for $f$.  So, we have the cokernel functor $\Cok:{\cal P}^1({\cal C})\rightmap{}{\cal C}$. 

Notice that $P\rightmap{}0$ or $P\rightmap{id}P$ belong to ${\cal P}^1({\cal C})$, for any ${\cal E}$-projective object $P$ in ${\cal C}$. We have the ideal ${\cal K}$ of the category ${\cal P}^1({\cal C})$ formed by the morphisms in ${\cal P}^1({\cal C})$ that are sums of morphisms which factor through objects of type $P\rightmap{}0$ or $id_P:P\rightmap{}P$ for some ${\cal E}$-projective object $P$. 
 Define the category $\underline{\cal P}^1({\cal C})$ as the quotient category of ${\cal P}^1({\cal C})$ modulo  the ideal ${\cal K}$. 
Then, the functor $\Cok$ induces a full and faithful functor $$\underline{\Cok}:\underline{\cal P}^1({\cal C})\rightmap{}{\cal C}.$$ Indeed, the ideal ${\cal K}$ is precisely the kernel of the functor $\Cok$, see the argument in the proof of \cite[(2.2)]{genbricks}.   If, moreover, the exact category $({\cal C},{\cal E})$ has enough projectives, then $\underline{\Cok}$ is an equivalence of categories because, in this case, any object $M$ in ${\cal C}$ admits an ${\cal E}$-projective presentation $P_2\rightmap{f}P_1\rightmap{}M$, with $f\in {\cal P}^1({\cal C})$.  
\end{remark}

\begin{definition}\label{D: admissible exact cats} Consider an exact category $({\cal C},{\cal E})$ with arbitrary coproducts and enough projectives. Assume furthermore that ${\cal C}$ satisfies the requirements of (\ref{R: compacts}) and that every ${\cal E}$-projective is a coproduct of compact objects. We  call such exact categories \emph{admissible}.  
We  denote by ${\cal C}_c$ the full subcategory of ${\cal C}$ formed by the compact objects of ${\cal C}$. 
\end{definition}

\begin{remark}\label{R: morfismos entre sumas de compactos} Let $M=\coprod_{i\in I}M_i$ and $N=\coprod_{j\in J}N_j$ be coproducts of familes of compact objects $\{M_i\}_{i\in I}$ and $\{N_j\}_{j\in J}$ in an additive category ${\cal C}$ as in (\ref{R: compacts}). Then, from (\ref{L: Hom(N,-) preserva coprods}) we obtain that any morphism $f:M\rightmap{}N$ in ${\cal C}$ is completely determined by a family of morphisms $\{f_{j,i}:M_i\rightmap{}N_j\}_{(j,i)\in J\times I}$ such that, for each $i$, we have $f_{j,i}=0$ for almost all $j$ and $f\sigma_{M_i}=\sum_j\sigma_{N_j}f_{j,i}$.
\end{remark}

\begin{proposition}\label{P: fiel y pleno en compactos implica fiel y pleno}
Let ${\cal C}$ and ${\cal C}'$ be admissible exact categories and suppose  that $G:{\cal C}\rightmap{}{\cal C}'$ is an exact functor that preserves coproducts and projectives. Assume, furthermore, that 
 it induces a full and faithful functor  $G_\vert:{\cal C}_c\rightmap{}{\cal C}'_c$ such that every compact ${\cal E}'$-projective object $Q$ is of the form $Q\cong G(P)$, for some compact ${\cal E}$-projective object $P$. Then, $G$ is full and faithful. 

If $G_1$ and $G_2$ are functors with the above properties and $\phi:G_1\rightmap{}G_2$ is a natural transformation such that its restriction $\phi_\vert:{G_1}_\vert\rightmap{}{G_2}_\vert$ is an isomorphism, then $\phi$ is an isomorphism.    
\end{proposition}
 
\begin{proof}  First observe that given $M=\coprod_{i\in I}M_i$ and $N=\coprod_{j\in J}N_j$, where every  $M_i$ and $N_j$ are compact objects in ${\cal C}_c$, we have that 
$$G:\Hom_{{\cal C}}(M,N)\rightmap{}\Hom_{\cal C'}(G(M),G(N))$$
 is an isomorphism. Indeed, from  (\ref{R: morfismos entre sumas de compactos}), we already know that a morphism $f:M\rightmap{}N$ in ${\cal C}$   is completely determined by a family of morphisms $\{f_{j,i}:M_i\rightmap{}N_j\}_{(j,i)\in J\times I}$  such that, for each $i$, we have $f_{j,i}=0$ for almost all $j$, with $f\sigma_{M_i}=\sum_{j\in J}\sigma_{N_j}f_{j,i}$, for each $i$. Then we have the formulas: 
 $$(*):\hbox{ \ \ \ \ }G(f)G(\sigma_{M_i})=\sum_{j\in J}G(\sigma_{N_j})G(f_{j,i}).$$
 Since $G$ preserves coproducts, we know that $G(M)$ is the coproduct of the family $\{G(M_i)\}_{i\in I}$ with injections $G(\sigma_{M_i}):G(M_i)\rightmap{}G(M)$ and $G(N)$ is the coproduct of the family $\{G(N_j)\}_{j\in J}$ with injections $G(\sigma_{N_j}):G(N_j)\rightmap{}G(N)$. A morphism $g:G(M)\rightmap{}G(N)$ is completely determined by a family of morphisms $\{g_{j,i}:G(M_i)\rightmap{}G(N_j)\}_{(j,i)\in J\times I}$ satisfying the formulas: 
  $$(**):\hbox{ \ \ \ \ }gG(\sigma_{M_i})=\sum_{j\in J}G(\sigma_{N_j})g_{j,i}.$$
  If we have 
$g=G(f)=0$ then, from $(*)$,  
we have $G(f_{j,i})=g_{j,i}=0$, for all $i,j$.  Since $G_\vert$ is a faithful functor, we obtain that $f_{j,i}=0$, for all $i,j$, thus $f=0$ and $G$ is injective. 
 
 Moreover, given any morphism $g:G(M)\rightmap{}G(N)$, since $G_\vert$ is full, there are morphisms $f_{j,i}:M_i\rightmap{}N_j$ with $G(f_{j,i})=g_{j,i}$, for all $i,j$. The family $\{f_{j,i}\}_{j,i}$ defines a morphism $f:M\rightmap{}N$ in ${\cal C}$ which satisfies the formulas preceding 
 $(*)$. Applying $G$, we obtain the formulas $(*)$.  It follows that $G(f)=g$, and that $G$ is surjective.  

Let us show now that $G$ is full and faithful. Since $({\cal C}, {\cal E})$ is admissible, if $P$ is ${\cal E}$-projective in ${\cal C}$, it is a coproduct of compact ${\cal E}$-projective objects. Hence, by assumption,  $G(P)$ is a coproduct of compact  ${\cal E}'$-projective objects. 

Since $G$ is exact, from the preceding paragraph, the functor $G$ induces a full and faithful functor $\widehat{G}:{\cal P}^1({\cal C})\rightmap{}{\cal P}^1({\cal C}')$. By assumption, every ${\cal E}'$-projective  object $Q$ is of the form $G(P)\cong Q$, for some ${\cal E}$-projective $P$. So the objects $Q\rightarrow 0$ and $id_Q:Q\rightmap{} Q$, where $Q$ is ${\cal E}'$-projective, are of the form $\widehat{G}(P\rightarrow 0)$ and $\widehat{G}(P\rightmap{id} P)$, for some ${\cal E}$-projective $P$. 
It follows that $\widehat{G}({\cal K})={\cal K}'$, where ${\cal K}$ and ${\cal K}'$ denote the kernels of the corresponding Cokernel functors.  
 Then we have an induced full and faithful functor $\underline{\widehat{G}}:\underline{\cal P}^1({\cal C})\rightmap{} \underline{\cal P}^1({\cal C}')$. It belongs to the following diagram 
$$\begin{matrix}
\underline{\cal P}^1({\cal C})&\rightmap{\underline{\Cok}}&{\cal C}\\
\shortlmapdown{\underline{\widehat{G}}}&&\shortlmapdown{G}\\
\underline{\cal P}^1({\cal C}')&\rightmap{\underline{\Cok}}&{\cal C}',\\
\end{matrix}$$
which commutes up to isomorphism. This natural isomorphism is induced by the isomorphism $\eta: \Cok \widehat{G}\rightmap{}G\Cok$, where each term of the family $\{\eta_X\}_X$ is defined, for $X=(P\rightmap{f}Q)$, by the commutative diagram 
$$\begin{matrix}
G(P)&\rightmap{G(f)}&G(Q)&\rightmap{}&\Cok \widehat{G}(X)&\rightmap{}&0\\
\shortlmapdown{id_P}&&\shortlmapdown{id_Q}&&\shortlmapdown{\eta_X}&\\
G(P)&\rightmap{G(f)}&G(Q)&\rightmap{}&G  \Cok (X)&\rightmap{}&0.\\
\end{matrix}$$ 
 It follows that $G$ is full and faithful. 

For the proof of the last statement of our Proposition, assume that $\phi_\vert:(G_1)_\vert\rightmap{}(G_2)_\vert$ is an isomorphism of functors from ${\cal C}_c$ to ${\cal C}'_c$. Notice that, whenever $M=\coprod_{i\in I}M_i$ with $M_i\in {\cal C}_c$, the  morphism $\phi_M$ in ${\cal C'}$ is unique such that the following diagram commutes for all $i\in I$ 
$$\begin{matrix}
 G_1(M)&\rightmap{ \ \ \phi_M \ \ }&G_2(M)\\
\shortrmapup{G_1(\sigma_{M_i})}&&\shortlmapup{G_2(\sigma_{M_i})}\\
G_1(M_i)&\rightmap{ \ \ \phi_{M_i} \ \ }&G_2(M_i).\\
\end{matrix}$$
Here $\phi_M$ is an isomorphism in ${\cal C}'$, because we can construct its inverse working with the inverses of the isomorphisms $\phi_{M_i}$. 

Given any object $X=(P\rightmap{f}Q)$ of ${\cal P}^1({\cal C})$, notice 
 that, from the naturality of $\phi$, the morphism $\phi_{\Cok X}$  is unique such that the following diagram commutes 
$$\begin{matrix}
G_1(P)&\rightmap{G_1(f)}&G_1(Q)&\rightmap{}&G_1(\Cok X)&\rightmap{}&0\\
\shortlmapdown{\phi_P}&&\shortlmapdown{\phi_Q}&&\shortlmapdown{\phi_{\Cok X}}&&\\
G_2(P)&\rightmap{G_2(f)}&G_2(Q)&\rightmap{}&G_2(\Cok X)&\rightmap{}&0.\\
\end{matrix}$$
Hence, $\phi_{\Cok X}$ is an isomorphism.

Since ${\cal C}$ has enough projectives, we can choose for any $M\in {\cal C}$ an object $X_M\in {\cal P}^1({\cal C})$ and an isomorphism $\theta_M:\Cok X_M\rightmap{}M$. Again, from the naturality of $\phi$, we have that $\phi_M=G_2(\theta_M)\phi_{\Cok X_M}G_1(\theta_M)^{-1}$ is an isomorphism.  Then, $\phi:G_1\rightmap{}G_2$ is a natural isomorphism.  
\end{proof}

\begin{proposition}\label{L: el funtor F fiel, pleno y exacto}
The functor $H:\underline{\cal A}\g\Mod\rightmap{}\Gamma\g\Mod$ is exact, full and faithful. 
\end{proposition}

\begin{proof} As remarked in (\ref{R: coprod}) and (\ref{L: compacts in cal(A)-Mod}),  $\underline{\cal A}\g\Mod$ has coproducts,  satisfies the properties of (\ref{R: compacts}), and $\underline{\cal A}\g\mod$ is its category of compact objects. 

 The  category $\underline{\cal A}\g\Mod$ with the given exact structure ${\cal E}$ has enough projectives and  every ${\cal E}$-projective  module is a direct sum of finite-dimensional $\underline{\cal A}$-modules,  see \cite[(6.20)]{BSZ}. Thus, $\underline{\cal A}\g\Mod$ is admissible. 
 
 The exact functor $H$ preserves coproducts by (\ref{C: H preserva coprods}). Hence, it preserves projectives because $H(\underline{A})=\Gamma$. Moreover, its restriction  $H_\vert:\underline{\cal A}\g\mod\rightmap{}\Gamma\g\mod$ to finite-dimensional modules is full and faithful by \cite[(12.4)]{hsb}.  Thus $H$ satisfies the requirements (\ref{P: fiel y pleno en compactos implica fiel y pleno}), and $H$ has the stated properties.  
\end{proof}

\begin{lemma}\label{L: F es Hom(A,L)} 
For each $M\in \underline{A}\g\Mod$, there is a natural isomorphism 
$$\sigma_M:\Gamma\otimes_{\underline{A}}M\rightmap{}\Hom_{\underline{\cal A}}(\underline{A},L_{\underline{\cal A}}(M))$$
of functors from $\underline{A}\g\Mod$ onto $\Gamma\g\Mod$.
 So the funtor $H$ restricts to an equivalence of categories $H:\underline{\cal A}\g\Mod\rightmap{}\widetilde{\cal I}$,
 where $\widetilde{\cal I}$ denotes the full subcategory of $\Gamma\g\Mod$ formed by the  modules induced from $\underline{A}\g\Mod$, that is by the class of $\Gamma$-modules isomorphic to some $\Gamma\otimes_{\underline{A}}N$, with $N\in \underline{A}\g\Mod$.
\end{lemma}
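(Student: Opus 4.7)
The plan is to define $\sigma_M$ explicitly, verify that $\sigma_{\underline{A}}$ is an isomorphism, and then bootstrap to arbitrary $M$ by a free presentation together with the five-lemma. Concretely, given $\gamma \in \Gamma$ and $m \in M$, let $\mu_m : \underline{A} \rightmap{} M$ be the left $\underline{A}$-morphism $a \mapsto am$, and set
$$\sigma_M(\gamma \otimes m) := L_{\underline{\cal A}}(\mu_m) \circ \gamma \in \Hom_{\underline{\cal A}}(\underline{A}, L_{\underline{\cal A}}(M)).$$
The canonical embedding $\underline{A} \hookrightarrow \Gamma$ sends $a$ to $(r_a, 0)$, where $r_a$ is right-multiplication by $a$, and the identity $\mu_m \circ r_a = \mu_{am}$ together with bilinearity gives $\sigma_M(\gamma a \otimes m) = \sigma_M(\gamma \otimes am)$, so $\sigma_M$ descends to the tensor product. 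Naturality in $M$ is immediate from the formula. For $M = \underline{A}$, $\sigma_{\underline{A}}$ is precisely the canonical isomorphism $\Gamma \otimes_{\underline{A}} \underline{A} \rightmap{} \Gamma$, $\gamma \otimes a \mapsto \gamma a$.

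Next, both sides of $\sigma$ preserve arbitrary direct sums in $M$: on the left by general properties of the tensor product, and on the right because $L_{\underline{\cal A}}$ is the identity on objects and $H = \Hom_{\underline{\cal A}}(\underline{A}, -)$ preserves arbitrary direct sums by Lemma \ref{L: Hom(A,-) preserva sumas directas}. Hence $\sigma_{\underline{A}^{(I)}}$ is an isomorphism for every index set $I$. To extend to arbitrary $M$, pick a free presentation $\underline{A}^{(J)} \rightmap{\alpha} \underline{A}^{(I)} \rightmap{\beta} M \rightmap{} 0$ in $\underline{A}\g\Mod$, and factor $\beta$ through a short exact sequence $0 \rightmap{} K \rightmap{} \underline{A}^{(I)} \rightmap{\beta} M \rightmap{} 0$. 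Its underlying $S$-linear sequence is short exact, so $L_{\underline{\cal A}}$ sends it to a conflation of ${\cal E}$; since $\underline{A}$ is ${\cal E}$-projective, $H$ converts this conflation into a short exact sequence of $\Gamma$-modules. The parallel argument for the surjection $\underline{A}^{(J)} \twoheadrightarrow K$ produces a surjection $H(L_{\underline{\cal A}}(\underline{A}^{(J)})) \twoheadrightarrow H(L_{\underline{\cal A}}(K))$, and splicing yields the right-exact bottom row of the naturality diagram
$$\begin{matrix}
\Gamma \otimes_{\underline{A}} \underline{A}^{(J)} & \rightmap{} & \Gamma \otimes_{\underline{A}} \underline{A}^{(I)} & \rightmap{} & \Gamma \otimes_{\underline{A}} M & \rightmap{} & 0 \\
\shortlmapdown{\sigma} & & \shortlmapdown{\sigma} & & \shortlmapdown{\sigma_M} & & \\
H(L_{\underline{\cal A}}(\underline{A}^{(J)})) & \rightmap{} & H(L_{\underline{\cal A}}(\underline{A}^{(I)})) & \rightmap{} & H(L_{\underline{\cal A}}(M)) & \rightmap{} & 0 \\
\end{matrix}$$
whose two leftmost verticals are isomorphisms; the five-lemma then forces $\sigma_M$ to be an isomorphism.

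For the equivalence assertion, Lemma \ref{L: el funtor F fiel, pleno y exacto} already supplies $H$ full, faithful, and exact. Any $N \in \underline{\cal A}\g\Mod$ has an underlying $\underline{A}$-module $N_0$ with $L_{\underline{\cal A}}(N_0) = N$ as objects, and the isomorphism $\sigma_{N_0}$ exhibits $H(N) \cong \Gamma \otimes_{\underline{A}} N_0$, so $H(N) \in \widetilde{\cal I}$; conversely, every object of $\widetilde{\cal I}$ arises in this way, whence $H$ is essentially surjective onto $\widetilde{\cal I}$. Endowing $\widetilde{\cal I}$ with the exact structure inherited from $\Gamma\g\Mod$, the restriction $H : \underline{\cal A}\g\Mod \rightmap{} \widetilde{\cal I}$ is an equivalence of exact categories.

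The step I expect to require the most care is the verification that $L_{\underline{\cal A}}$ sends a short exact sequence of $\underline{A}$-modules to a conflation in $(\underline{\cal A}\g\Mod, {\cal E})$, so that the ${\cal E}$-projectivity of $\underline{A}$ can be invoked to obtain the right exactness of $H \circ L_{\underline{\cal A}}$ needed for the five-lemma. This is essentially a direct check against the description of ${\cal E}$ recalled at the top of the section: $L_{\underline{\cal A}}$ adds a zero second component, so $gf = 0$ is preserved, and the underlying $S$-sequence remains short exact because $S \subseteq \underline{A}$.
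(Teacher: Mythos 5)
Your proposal is correct and follows essentially the same route as the paper, which simply cites the analogous result \cite{hsb}(11.5) together with the direct-sum preservation of $H$ from (\ref{L: Hom(A,-) preserva sumas directas}); you have supplied in full the standard argument that citation hides (explicit $\sigma_M$, isomorphism on free modules via direct sums, then a free presentation and the five lemma, using that $H$ takes the conflation $L_{\underline{\cal A}}$ of a short exact sequence to an exact sequence).
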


\begin{proof} By definition, $\sigma_M(f\otimes m)=f_m=(f^0_m,f^1_m)$, where $f^0_m(a)=f^0(a)m$ and $f^1_m(v)[a]=f^1(v)[a]m$, for $a\in \underline{A}$, $v\in V$, $m\in M$, and $f\in \Gamma$, see \cite[(12.5)]{hsb}. The functors   $HL_{\underline{\cal A}}$ and  $\Gamma\otimes_{\underline{A}}-$ are exact functors from $\underline{A}\g\Mod$ to $\Gamma\g\Mod$ which preserve coproducts and projectives. The admissible categories $\underline{A}\g\Mod$ and $\Gamma\g\Mod$ have as subcategories of compact objects $\underline{A}\g\mod$ and $\Gamma\g\mod$, respectively, and the natural transformation $\sigma:\Gamma\otimes_{\underline{A}}-\rightmap{}HL_{\underline{\cal A}}$ restricts to an isomorphism of the corresponding restricted functors to the subcategories of compact objects, as proved in \cite[(12.5)]{hsb}. Then, our statement follows from (\ref{P: fiel y pleno en compactos implica fiel y pleno}).  
\end{proof}

\begin{proposition}\label{L: tilde(I) cerrada bajo extensiones} The category $\widetilde{\cal I}$ is closed under extensions and direct summands.
\end{proposition}

\begin{proof} Consider $M\in \underline{\cal A}\g\Mod$ and a conflation $\xi: L\rightmap{f}P\rightmap{g}M$ in ${\cal E}$ where $H(P)$ is ${\cal E}$-projective. So, the sequence $$H(\xi):0\rightmap{}H(L)\rightmap{H(f)}H(P)\rightmap{H(g)}H(M)\rightmap{}0$$ is exact with $H(P)$ a projective $\Gamma$-module. Then, for every $N\in \underline{\cal A}\g\Mod$, we have the following diagram with exact rows 
$$\begin{matrix} 
\Hom_{\underline{\cal A}}(P,N)&\rightmap{f^*}&\Hom_{\underline{\cal A}}(L,N)&\rightmap{c}&\Ext_{\cal E}(M,N)&\rightarrow 0\\
\shortlmapdown{H}&&\shortlmapdown{H}&&\shortlmapdown{\widehat{H}}\\
\Hom_{\Gamma}(H(P),H(N))&\rightmap{H(f)^*}&\Hom_{\Gamma}(H(L),H(N))&\rightmap{\gamma}&\Ext_{\Gamma}(H(M),H(N))&\rightarrow 0\\
\end{matrix}$$
where $\widehat{H}$ is induced by the exact functor $H$, and the recipes of $c$ and $\gamma$ are given by $c(h)=[h\xi]$ and  $\gamma(h')=[h'H(\xi)]$, for $h\in \Hom_{\underline{\cal A}}(L,N)$ and $h'\in \Hom_\Gamma(H(L),H(N))$,  respectively. Here, $[h\xi]$ denotes the class of the conflation obtained  as the pushout of  $\xi$ using the morphism $h$. 
Since $H$ is exact, it preserves pushouts along inflations, so $\gamma(H(h))=[H(h)H(\xi)]=[H(h\xi)]=\widehat{H}([h\xi])$. Hence, the diagram is commutative and, since the first two vertical arrows are isomorphisms, so is the third one. This implies that $\widetilde{\cal I}$ is closed under extensions. 

The remaining part of the proof follows from the fact that, since idempotents split in $\underline{\cal A}\g\Mod$ which is equivalent to $\widetilde{I}$, then idempotents split in $\widetilde{\cal I}$. Indeed, take $M\in \widetilde{\cal I}$. 
We may assume that $M=H(M')$. Take a direct summand $N$ of $M$, consider the inclusion $i:N\rightmap{}M$, the projection $p:M\rightmap{}N$, and the idempotent $e=ip:M\rightmap{}M$, which satisfy $pi=id_N$. Then, there is an idempotent $e':M'\rightmap{}M'$ in $\underline{\cal A}\g\Mod$ with $H(e')=e$. Since $\underline{\cal A}$ is a Roiter interlaced weak ditalgebra, idempotents split in $\underline{\cal A}\g\Mod$, see \cite[(11.4)]{hsb}, so there is an $\underline{\cal A}$-module $N'$ and morphisms $q:M'\rightmap{}N'$ and $j:N'\rightmap{}M'$, such that $jq=e'$ and $qj=id_{N'}$ in $\underline{\cal A}\g\Mod$. The images $H(q):M\rightmap{}H(N')$ and $H(j):H(N')\rightmap{}M$ satisfy that
$H(j)H(q)=H(e')=e=ip$ and $H(q)H(j)=id_{H(N')}$. This implies that $H(N')\cong N$, indeed we have the isomorphism $pH(j):H(N')\rightmap{}N$ with inverse $H(q)i:N\rightmap{}H(N')$.
\end{proof}

For the proof of the following result, the use of the right algebra $\Gamma$ associated to the special triangular interlaced weak ditalgebra $\underline{\cal A}$ and the exact equivalence $H:\underline{\cal A}\g\mod\rightmap{}{\cal I}$,  where ${\cal I}$  denotes the full subcategory of $\Gamma\g\mod$ of modules induced from $\underline{A}\g\mod$, is crucial. The first ideas in this direction can be traced back to \cite{BK} and \cite{BuBt}, see also  \cite{Bu}.

\begin{theorem}\label{T: A-mod tiene sqcsd}
 If $\underline{\cal A}$ is a special triangular interlaced weak ditalgebra, then it is a Roiter interlaced weak ditalgebra and the category $\underline{\cal A}\g\mod$ has a natural exact structure ${\cal E}_{\underline{\cal A}}$, see \cite[(11.11)]{hsb}. The corresponding exact category $(\underline{\cal A}\g\mod,{\cal E}_{\underline{\cal A}})$ has almost split conflations.
\end{theorem}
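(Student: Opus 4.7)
The plan handles the three assertions of the theorem separately. For the Roiter property, note that the layer algebra $S$ of a special triangular interlaced weak ditalgebra is a finite product of copies of $k$, hence semisimple; together with the triangularity of $(S,W)$ this forces idempotents to split in $\underline{\cal A}\g\Mod$, by the general splitting principle invoked in \cite{bpsqh}(5.7). The existence of the natural exact structure ${\cal E}_{\underline{\cal A}}$ on $\underline{\cal A}\g\mod$ is supplied directly by the cited result \cite{hsb}(10.11): its conflations are the composable pairs $M\rightmap{f}E\rightmap{g}N$ with $gf=0$ whose underlying sequence is short exact in $S\g\mod$. So the substantive content of the theorem is the existence of almost split conflations.

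The strategy for the third point is to transport classical Auslander-Reiten theory from $\Gamma\g\mod$ through the exact equivalence induced by $H=\Hom_{\underline{\cal A}}(\underline{A},-)$. Since $\underline{A}$ is finite-dimensional, so is $\Gamma$; combining Lemmas \ref{L: el funtor F fiel, pleno y exacto} and \ref{L: F es Hom(A,L)}, the functor $H$ restricts to an exact equivalence
$$H:(\underline{\cal A}\g\mod,{\cal E}_{\underline{\cal A}})\rightmap{\simeq}\widetilde{\cal I}_0,$$
where $\widetilde{\cal I}_0:=\widetilde{\cal I}\cap\Gamma\g\mod$ is equipped with the exact structure inherited from the abelian category $\Gamma\g\mod$. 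By Lemma \ref{L: tilde(I) cerrada bajo extensiones}, $\widetilde{\cal I}_0$ is closed under extensions and direct summands in $\Gamma\g\mod$, and since $\Gamma=H(\underline{A})$ lies in $\widetilde{\cal I}$, every projective $\Gamma$-module lies in $\widetilde{\cal I}_0$.

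Given an indecomposable $M\in\widetilde{\cal I}_0$ non-projective in the inherited exact structure, the plan is to show that the classical AR sequence $0\to\tau M\to E\to M\to 0$ in $\Gamma\g\mod$ lies entirely in $\widetilde{\cal I}_0$. Extension-closure then makes it an almost split conflation of $\widetilde{\cal I}_0$, which pulls back along $H^{-1}$ to an almost split conflation of $(\underline{\cal A}\g\mod,{\cal E}_{\underline{\cal A}})$ ending at $H^{-1}(M)$. To witness $\tau M\in\widetilde{\cal I}_0$, I would pick a minimal projective presentation $P_1\to P_0\to M\to 0$ in $\widetilde{\cal I}_0$ (available because all projective $\Gamma$-modules belong to $\widetilde{\cal I}_0$), apply the transpose and then $k$-duality, and use the bimodule identification $H(L_{\underline{\cal A}}(N))\cong \Gamma\otimes_{\underline{A}}N$ of Lemma \ref{L: F es Hom(A,L)} to place the outcome inside $\widetilde{\cal I}$. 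The dual construction yields almost split conflations starting at non-injective indecomposables.

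The main obstacle is precisely this stability of the AR translate inside $\widetilde{\cal I}_0$: equivalently, one needs $\widetilde{\cal I}_0$ to be functorially finite in $\Gamma\g\mod$ in the sense of Auslander-Smalo, so that relative AR sequences exist in the subcategory. Establishing this will rely on exploiting the tensor-product description $\Gamma\otimes_{\underline{A}}-:\underline{A}\g\mod\rightmap{}\widetilde{\cal I}_0$, which should provide the required right $\widetilde{\cal I}_0$-approximations of arbitrary $\Gamma$-modules; whenever $\tau M$ fails to lie a priori in $\widetilde{\cal I}_0$, one replaces it by pulling back the ambient AR sequence along such an approximation and truncating to obtain a conflation inside $\widetilde{\cal I}_0$ with the required almost-split property.
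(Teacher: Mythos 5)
Your overall architecture is the same as the paper's: the proof of this theorem is obtained by transporting the problem through the exact equivalence $H$ onto the subcategory ${\cal I}=\widetilde{\cal I}\cap\Gamma\g\mod$ of induced $\Gamma$-modules and adapting \cite{BSZ}(7.18), which is precisely the Burt--Butler/Auslander--Smal\o{} mechanism you invoke at the end. (A minor conflation at the start: the Roiter property is not the statement that idempotents split --- it is the assertion that a morphism $(f^0,f^1)$ with $f^0$ bijective is invertible, and it follows from triangularity of the layer; \cite{bpsqh}(5.7) then yields idempotent splitting as a consequence.)

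The genuine gap is in your primary plan for the existence of almost split conflations: you propose to show that the ordinary Auslander--Reiten sequence $0\to\tau M\to E\to M\to 0$ of $\Gamma\g\mod$ already lies in $\widetilde{\cal I}_0$, by computing $\tau M$ via the transpose of a projective presentation taken inside $\widetilde{\cal I}_0$. This fails in general: the ordinary translate of an induced module need not be induced. Indeed, by (\ref{P: filtrados e inducidos}) the induced modules are exactly the $\Delta'$-filtered $\Gamma$-modules, and for a quasi-hereditary algebra the ordinary AR sequence ending at an indecomposable filtered module is typically not filtered; the category ${\cal F}(\Delta')$ has only \emph{relative} almost split sequences, whose left-hand terms differ from $\tau M$. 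The correct mechanism is the one you relegate to your last paragraph, but it must be stated with the right variance. One shows that ${\cal I}$ is extension-closed, closed under summands, contains the projectives, and is functorially finite: the counit of the adjunction $\Gamma\otimes_{\underline{A}}(-)\dashv\res$ provides right ${\cal I}$-approximations, while left approximations come from the coinduction functor together with the Burt--Butler identification of induced with coinduced modules. The relative AR sequence ending at $M$ is then obtained by \emph{pushing out} the ambient AR sequence along the minimal \emph{left} ${\cal I}$-approximation $\tau M\to C$ and extracting a suitable direct summand --- not by ``pulling back along a right approximation,'' which modifies the end $M$ of the sequence rather than the end $\tau M$ that fails to lie in ${\cal I}$.
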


\begin{proof} The proof of \cite[(7.18)]{BSZ} can be adapted to this situation, using the equivalence $H:\underline{\cal A}\g\mod\rightmap{}{\cal I}$. 
\end{proof}

The preceding statement and the following theorem were already used in \cite{bpsqh}, but we prefer to state them here explicitely, with a brief indication for their proof. 
Their formulation evolved from a well known result due to Crawley-Boevey in  \cite{CB1} for tame finite-dimensional algebras, which was revisited first in \cite{BSZ} for tame finite-dimensional seminested ditalgebras and then in \cite{bpsqh} for our present context.

\begin{lemma}\label{L: dim tau(M) leq c x dim M en A-mod}
Let $\underline{\cal A}$ be a special triangular interlaced weak ditalgebra.
  For any non 
${\cal E}_{\underline{\cal A}}$-projective indecomposable object $M\in \underline{\cal A}\g\mod$, there is an almost split conflation $N\rightmap{}E\rightmap{}M$. Thus, 
 the indecomposable object  $N$ is uniquely determined up to isomorphism. We call $N$ the \emph{translate} of $M$ and denote it by $\tau(M)$. There is a number $c_{\underline{\cal A}}$ such that, for any non ${\cal E}_{\underline{\cal A}}$-projective indecomposable $M\in \underline{\cal A}\g\mod$, the following  inequality holds:
$$\dim_k\tau(M)\leq c_{\underline{\cal A}}\dim_kM.$$
\end{lemma}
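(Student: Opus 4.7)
The plan is to argue via the exact equivalence $H:\underline{\cal A}\g\mod\rightmap{}{\cal I}$ of Lemma~\ref{L: F es Hom(A,L)}, combined with the fact that $\underline{A}$ serves as an ${\cal E}_{\underline{\cal A}}$-projective generator of $\underline{\cal A}\g\mod$. First I would record the two-sided dimension comparison
$$\dim_k N\leq \dim_k H(N)\leq c_1\dim_k N$$
for every $N\in\underline{\cal A}\g\mod$, where $c_1$ depends only on $\dim_k\underline{A}$ and $\dim_k\underline{V}$. The lower bound uses the canonical $k$-linear embedding $N\hookrightarrow H(N)$, $n\mapsto (f_n^0,0)$ with $f_n^0(a)=an$; the upper bound follows from the inclusion $H(N)\subseteq \Hom_S(\underline{A},N)\oplus \Hom_{\underline{A}\g\underline{A}}(\underline{V},\Hom_k(\underline{A},N))$ appearing in the proof of Lemma~\ref{L: Hom(A,-) preserva sumas directas}, together with the finite-dimensionality of $\underline{A}$ and $\underline{V}$.

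Next, take a minimal ${\cal E}_{\underline{\cal A}}$-projective presentation $P_1\rightmap{}P_0\rightmap{}M\rightmap{}0$, where $P_0$ is the projective cover of $M$ and each $P_i$ is a direct summand of some $\underline{A}^{n_i}$. I would bound the relevant dimensions: $P_0$ has at most $\dim_k(M/\rad M)\leq\dim_k M$ indecomposable summands, each of $k$-dimension at most $\dim_k\underline{A}$, so $\dim_k P_0\leq (\dim_k\underline{A})\dim_k M$; and letting $K=\ker(P_0^0\rightmap{}M)$, we have $\dim_k K\leq\dim_k P_0$, whence $n_1\leq(\dim_k\underline{A})\dim_k M$ and $\dim_k P_1\leq(\dim_k\underline{A})^2\dim_k M$.

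Then I would invoke an Auslander-Reiten formula of the form $\tau=D\mathrm{Tr}$ in the exact category $(\underline{\cal A}\g\mod,{\cal E}_{\underline{\cal A}})$, adapted from \cite{BSZ}(7.18) along the lines already used in the proof of Theorem~\ref{T: A-mod tiene sqcsd}. This expresses $\tau(M)$ as the $k$-dual of a subquotient of $\Hom_{\underline{\cal A}}(P_1,\underline{A})$, which is itself a direct summand of $\Hom_{\underline{\cal A}}(\underline{A}^{n_1},\underline{A})=\Gamma^{n_1}$. Combining with the previous estimates,
$$\dim_k\tau(M)\leq n_1\dim_k\Gamma\leq(\dim_k\underline{A})\dim_k M\cdot c_1\dim_k\underline{A}=c_1(\dim_k\underline{A})^2\dim_k M,$$
yielding $c_{\underline{\cal A}}:=c_1(\dim_k\underline{A})^2$.

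The main obstacle is the rigorous set-up of the formula $\tau=D\mathrm{Tr}$ in $(\underline{\cal A}\g\mod,{\cal E}_{\underline{\cal A}})$, because projectivity and the transpose must be interpreted via the interlaced weak ditalgebra structure rather than the underlying $\underline{A}$-module structure; this requires the careful adaptation of \cite{BSZ}(7.18). If one wishes to bypass this, an alternative is to transfer the almost split conflation through $H$ to an almost split sequence in ${\cal I}\subseteq\Gamma\g\mod$ and compare it with the classical AR translate $\tau_\Gamma$ in the ambient finite-dimensional module category (where $\dim_k\tau_\Gamma(N')\leq(\dim_k\Gamma)^3\dim_k N'$ via a minimal projective presentation argument); the technical point then is to control the dimension when passing from $\tau_\Gamma(H(M))$ to the AR translate in the extension-closed subcategory ${\cal I}$ via suitable $\mathcal{I}$-approximations.
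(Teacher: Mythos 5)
Your peripheral estimates are sound: the two-sided comparison $\dim_k N\leq \dim_k H(N)\leq c_1\dim_k N$ is correct (the lower bound also follows from the paper's own observation that $\underline{A}$ is a direct summand of $\Gamma$ as a right $\underline{A}$-module, and the upper bound from the epimorphism $\underline{A}^{\dim_k\Gamma}\rightmap{}\Gamma$ used in (\ref{L: endolongitudes van bien con F})), and the bounds on $\dim_kP_0$ and $\dim_kP_1$ via the canonical deflation $\underline{A}\otimes_SM\rightmap{}M$ are fine. The problem is that everything then hangs on the formula $\tau=D\,{\rm Tr}$ in the exact category $(\underline{\cal A}\g\mod,{\cal E}_{\underline{\cal A}})$, and this cannot simply be invoked: it is the entire content of the lemma, since the surrounding dimension counts are routine. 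There is no ready-made duality $D$ on $\underline{\cal A}\g\mod$ (the $k$-dual of a left module over an interlaced weak ditalgebra is not again such a module), no relative Nakayama functor has been set up, and the existence proof for almost split conflations in this setting (Theorem (\ref{T: A-mod tiene sqcsd}), adapting \cite{BSZ}(7.18)) does not proceed by a transpose construction at all --- it goes through the exact full embedding $H$ into $\Gamma\g\mod$ and the theory of extension-closed subcategories. In relative Auslander--Reiten theory (Auslander--Smal{\o}, Auslander--Solberg) the translate of a functorially finite extension-closed subcategory is obtained from the ambient $\tau_\Gamma$ by forming approximations and extracting a direct summand; it is not given by a naive $D\,{\rm Tr}$ over the exact structure. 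So route (A) has a genuine hole exactly at the step you yourself flag as ``the main obstacle.''

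Your alternative route (B) is much closer to the argument the paper points to (the adaptation of \cite{BSZ}(32.5), which rests on Burt--Butler theory): one transports the almost split conflation through $H$, or through the induction/coinduction functors to the right and left algebras of the ditalgebra, and compares with the classical translate there. But the input you are missing is precisely that comparison: one must show that $H(\tau(M))$ differs from $\tau_\Gamma(H(M))$ only by summands of controlled dimension --- equivalently, that the minimal left ${\cal I}$-approximation of $\tau_\Gamma(H(M))$ has dimension bounded linearly in $\dim_kH(M)$, which requires exhibiting an explicit approximation (e.g.\ via a coinduced module) rather than merely asserting covariant finiteness. Without establishing this, neither route closes; as it stands the proposal is a correct plan for the bookkeeping around the key step, but the key step itself is left unproven.
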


\begin{proof} The proof of \cite[(32.5)]{BSZ} can be adapted to this situation. See the proof of  \cite[(1.4)]{bpsqh}, in the admissible homological system case, Steps 1 and 2.
\end{proof}

\begin{remark}\label{R: almost  homogeneous}
Given an exact Krull-Schmidt category ${\cal C}$, recall that an indecomposable object $M\in {\cal C}$ is called \emph{homogeneus} if it admits an almost split conflation of the form $M\rightmap{}E\rightmap{}M$. 

The following lemma gives a useful property related to the preservation of homogeneity by certain type of  functors 
\end{remark}

\begin{lemma}\label{L: on homogeneity}
Let $F:{\cal C}'\rightmap{}{\cal C}$ be a full and faithful functor between exact Krull-Schmidt categories.  Assume that   ${\cal U}$ is a full replete subcategory of  ${\cal C}$ which is closed under direct summands. Moreover, suppose that,
 for every indecomposable object $M\in {\cal U}$ there is some $M'\in {\cal C}'$ such that $F(M')\cong M$.  Let $M \in {\cal U}$ be an indecomposable such that an almost split conflation of ${\cal C}$ ending at $M$ lies in ${\cal U}$ and 
 $F(M')\cong M$, for some homogeneous $M'\in {\cal C}'$.  Then, the object $M$ of ${\cal C}$ is also homogeneous.
\end{lemma}

\begin{proof}  Let $\zeta:N\rightmap{f}E\rightmap{g}M$  be an almost split conflation in ${\cal C}$ with $N,E\in {\cal U}$. Since ${\cal C}$ is a Krull-Schmidt category and $F$ is full and faithful, our assumptions imply that there is a sequence of morphisms $N'\rightmap{f'}E'\rightmap{g'}M'$ in ${\cal C}'$ which is mapped by the functor $F$ onto a conflation $$\widetilde{\zeta}:F(N')\rightmap{F(f')}F(E')\rightmap{F(g')}F(M')$$ of ${\cal C}$ isomorphic to $\zeta$. Since $g$ is a minimal right almost split morphism and $F$ is full and faithful, we obtain that $g':E'\rightmap{}M'$ is a right minimal almost split morphism. Since $M'$ is homogeneous, there is an  almost split conflation in ${\cal C}'$ of the form
$$\zeta':M'\rightmap{f''}E'\rightmap{g'}M'.$$ 
Hence, we have $F(g')F(f'')=0$.  
Since $\widetilde{\zeta}$ is an exact pair, there is some morphism $s:M'\rightmap{}N'$ in ${\cal C}'$ such that $F(f')F(s)=F(f'')$. So $f''=f's$. But $f''$ is irreducible and $f'$ is not a retraction, so $s$ is an isomorphism. Hence, 
$N\cong F(N')\cong F(M')\cong M$, and we are done. 
\end{proof}

\begin{remark}\label{R: F exact} Given a minimal ditalgebra ${\cal B}$, consider the class $\widehat{\cal E}_{\cal B}$ of composable pairs of morphisms $N\rightmap{f}E\rightmap{g}M$ in ${\cal B}\g\Mod$ 
such that $gf=0$ and 
$0\rightmap{}N\rightmap{f^0}E\rightmap{g^0}M\rightmap{}0$
is an exact sequence in $B\g\Mod$. It is known that $\widehat{\cal E}_{\cal B}$ is an exact structure on ${\cal B}\g\Mod$. We will consider ${\cal B}\g\Mod$ as an exact category equipped with this special exact structure. Moreover, we know that ${\cal B}\g\mod$ has almost split conflations, which are given by the image under the embedding functor $L_{\cal B}:B\g\Mod\rightmap{}{\cal B}\g\Mod$ of the well known almost split sequences of $B\g\mod$. See \cite[(32.3)]{BSZ}

As mentioned in (\ref{T: A-mod tiene sqcsd}), for a special triangular interlaced  weak ditalgebra $\underline{\cal A}$, the category $\underline{\cal A}\g\mod$ has a natural exact structure ${\cal E}_{\underline{\cal A}}$ with almost split conflations.

Notice that, with the specified exact structures on ${\cal B}\g\Mod$ and $\underline{\cal A}\g\Mod$, any reduction functor $F:{\cal B}\g\mod\rightmap{}\underline{\cal A}\g\mod$   as in (\ref{T: pregens de underline(A)}) is exact. 
Indeed, this can be proved using the argument of the proof of \cite[(32.4)]{BSZ} adapted to the present situation. 
This argument uses \cite[(6.13)]{bpsqh} and the fact that the underlying algebra of the layer of $\underline{\cal A}$ is semisimple. 
\end{remark}

\begin{theorem}\label{T: casi todo M en A-mod tq tau(M) cong M} Assume that 
  $\underline{\cal A}=({\cal A},I)$ is a ${\cal P}$-oriented  triangular interlaced weak ditalgebra, as in   (\ref{D: biquiver P-orientado})\&(\ref{D: triangular interlaced weak ditalg}), where $I\subseteq \rad A$.  Suppose that $\underline{\cal A}$ is not wild. 
  Thus,  $(\underline{\cal A}\g\mod,{\cal E}_{\underline{\cal A}})$  has almost split conflations and a translation $\tau$. Then, for every  $d\in \hueca{N}$, almost every  $d$-dimensional indecomposable module $M\in \underline{\cal A}\g\mod$ satisfies $\tau(M)\cong M$.
\end{theorem}

\begin{proof} Consider the full and faithful reduction functor $F:{\cal B}\g\mod\rightmap{}\underline{\cal A}\g\mod$ given by an application of (\ref{T: parametrización de módulos de dim acotada}) to $\underline{\cal A}$, using the number $d' := (1 + c_{\underline{\cal A}}) d$. Consider also the full subcategory ${\cal U}$ of $\underline{\cal A}\g\mod$ defined by the $\underline{\cal A}$-modules $N$ with $\dim_kN\leq d'$. From (\ref{R: F exact}), we know that almost every indecomposable object in ${\cal B}\g\mod$ is homogeneous. Given a  non-projective indecomposable $M\in \underline{\cal A}\g\mod$ with $\dim_kM\leq d$, from (\ref{L: dim tau(M) leq c x dim M en A-mod}), we know that there is an almost split conflation ending at $M$, which in fact lies in ${\cal U}$. The functor $F$ is such that every $N\in {\cal U}$ is of the form $N\cong F(N')$ for some $N'\in {\cal B}\g\mod$. Having in mind (\ref{R: F exact}), after eliminating all the possible $M$ with $\dim_kM\leq d$ with non-homogeneous $M'\in {\cal B}\g\mod$ with $F(M')\cong M$, we remain with almost every indecomposable $\underline{\cal A}$-module $M$ with $\dim_k M\leq d$. To any such $M$,  we can apply (\ref{L: on homogeneity}) and obtain that it is  homogeneous. 
\end{proof} 

Observe that the argument used in the preceding proof is essentially the same used in \cite[(32.6)]{BSZ}, which in turn is Crawley-Boevey's argument in \cite[Thm D]{CB1}. It underlies the argument of Step 3 in the proof of Theorem (1.4) in the admissible homological system case in  \cite{bpsqh}.

\section{${\cal P}$-presentations and admissible equivalences}\label{Coverage theorems ...}

In this section, we consider ${\cal P}$-oriented triangular interlaced weak ditalgebras $\underline{\cal A}=({\cal A},I)$ such that $I\subseteq \rad(A)^2$, where ${\cal P}$ is a finite preordered set.  As in  \cite[\S13]{hsb}, we will call such a  weak ditalgebra $\underline{\cal A}$, a 
\emph{${\cal P}$-strict interlaced weak ditalgebra}. Thus $\underline{\cal A}$ is a special triangular interlaced weak ditalgebra and \S5 applies to it. If, moreover, $\underline{\cal A}$ is not wild,  it satisfies the hypothesis of (\ref{T: pregens de underline(A)}). 

An example of this type of  interlaced weak ditalgebra is the ${\cal P}$-oriented interlaced weak ditalgebra $\underline{\cal A}=\underline{\cal A}(\Delta)$ associated to an admissible homological system $({\cal P},\leq,\{\Delta_v\}_{v\in {\cal P}})$ for a  finite-dimensional algebra $\Lambda$, see for instance \cite[(5.22) \& (13.2)]{hsb}. 

The fact that $\underline{\cal A}=({\cal A},I)$ is a ${\cal P}$-oriented interlaced weak ditalgebra entails certain properties on the algebra $\underline{A}=A/I$. Notably, it is a finite-dimensional basic $k$-algebra such that every indecomposable projective $\underline{A}$-module $P$ is a \emph{brick}, that is such that $\End_{\underline{A}}(P)\cong k$.  
 
 \begin{remark}\label{R: Sauter} Let $({\cal C},{\cal E})$ and $({\cal C}',{\cal E}')$ be exact categories. An \emph{exact functor}   
$\mathfrak{G}:({\cal C},{\cal E})\rightmap{}({\cal C}',{\cal E}')$ is a functor $\mathfrak{G}:{\cal C}\rightmap{}{\cal C}'$  that maps conflations of ${\cal E}$ on conflations of ${\cal E}'$.  We say that an exact functor  $\mathfrak{G}:({\cal C},{\cal E})\rightmap{}({\cal C}',{\cal E}')$ is an \emph{exact equivalence} if $\mathfrak{G}:{\cal C}\rightmap{}{\cal C}'$ is an equivalence of categories. We will say that $\mathfrak{G}$ is \emph{an equivalence of exact categories} iff $\mathfrak{G}:({\cal C},{\cal E})\rightmap{}({\cal C}',{\cal E}')$ is an exact equivalence and so is its quasi-inverse $\mathfrak{G}':({\cal C}',{\cal E}')\rightmap{}({\cal C},{\cal E})$. 
  
 J. Sauter has shown in 
 \cite[(2.8)]{Sauter} that an exact equivalence $\mathfrak{G}:({\cal C},{\cal E})\rightmap{}({\cal C}',{\cal E}')$ is an equivalence of exact categories iff the induced morphism 
 $$\Ext^1_{\cal E}(M,N)\rightmap{}\Ext^1_{\cal E'}(\mathfrak{G}(M),\mathfrak{G}(N))$$
 is an isomorphism for any $M,N\in {\cal C}$.   
\end{remark}

\begin{definition}\label{D: presentations of subcats} Let $\Sigma$ be a 
finite-dimensional $k$-algebra and ${\cal C}$ a full and replete subcategory of $\Sigma\g\Mod$ that is closed under arbitrary coproducts and extensions.  Then, a \emph{${\cal P}$-presentation of} ${\cal C}$ is an equivalence of categories   $\frak{G}:\underline{\cal A}\g\Mod\rightmap{}{\cal C}$, where ${\cal P}$ is a finite preordered set and 
 $\underline{\cal A}= ({\cal A}, I)$ is a ${\cal P}$-strict  interlaced weak ditalgebra. Furthermore, we require that the functor $\mathfrak{G}$  satisfies the following properties:
\begin{enumerate}
\item $\frak{G}$ is an equivalence of exact categories, where $\underline{\cal A}\g\Mod$ is endowed with its canonical exact structure ${\cal E}$ and ${\cal C}$ has the exact structure ${\cal E}'$  inherited from the usual  exact structure of $\Sigma\g\Mod$.  
\item The composition of functors $\underline{A}\g\Mod\rightmap{L_{\underline{\cal A}}}\underline{\cal A}\g\Mod\rightmap{\frak{G}}{\cal C}$ is isomorphic to the functor 
$\mathfrak{Z}\otimes_{\underline{A}}-$, for some finite-dimensional $\Sigma\g\underline{A}$-bimodule $\mathfrak{Z}$. 
 
\item Every generic $\Sigma$-module $M$ that belongs to ${\cal C}$ is of the form $M\cong \mathfrak{G}(N)$, for some pregeneric object $N\in \underline{\cal A}\g\Mod$. 
\end{enumerate}  
\end{definition}

In the following, unless we explicitely indicate the contrary, when we handle a ${\cal P}$-presentation of a category ${\cal C}$ as above, we adopt the preceding notations. 
  
\begin{remark}\label{R: general S-filtrations} Given an exact category $({\cal C},{\cal E})$ and a set of objects ${\cal S}$ of ${\cal C}$, we say that an object $M\in {\cal C}$ has an \emph{${\cal S}$-filtration} iff there is a sequence of inflations 
$$M_1\rightmap{s_1}M_2\rightmap{s_2}M_3\rightmap{}\cdots\rightmap{}M_{l-1}\rightmap{s_{l-1}}M_l=M$$
such that $M_1\cong \coprod_{i\in I}S_i$ with all $S_i\in {\cal S}$ and, for each $j\in [1,l-1]$, $\Coker(s_j)\cong \coprod_{i\in I_j}S_i$ with all $S_i\in{\cal S}$. 

Observe that if ${\cal D}$ is a full and replete  subcategory of ${\cal C}$ that is closed under coproducts and ${\cal E}$-extensions, and such that ${\cal S}\subseteq {\cal D}$, then any object $M\in {\cal C}$ with an ${\cal S}$-filtration belongs to ${\cal D}$. This follows by an easy induction. 

Notice also that whenever $G:({\cal C},{\cal E})\rightmap{}({\cal C}',{\cal E}')$ is an exact functor that preserves coproducts and $M\in {\cal C}$ admits an ${\cal S}$-filtration, then $G(M)$ admits a $G({\cal S})$-filtration. 
\end{remark}

\begin{proposition}\label{P: structure of presented subcats}
Let $\mathfrak{G}:\underline{\cal A}\g\Mod\rightmap{}{\cal C}$  be a ${\cal P}$-presentation for a subcategory ${\cal C}$ of $\Sigma\g\Mod$, as in (\ref{D: presentations of subcats}), and  let $\{S_v\}_{v\in {\cal P}}$ be a
complete family of representatives of the isoclasses of the simple $\underline{A}$-modules. Define $\Delta'_v:=\mathfrak{Z}\otimes_{\underline{A}}S_v$, for $v\in {\cal P}$, and $\Delta'=\{\Delta'_v\}_{v\in {\cal P}}$.  Then, we have:
\begin{enumerate}
\item  $({\cal P},\leq,\{\Delta'_v\}_{v\in {\cal P}})$ is   an  homological system for $\Sigma$ 
\item ${\cal C}=\widetilde{\cal F}(\Delta')$. Thus, by (\ref{R: CalF(Theta) es admisible}), ${\cal C}$ is an admissible exact subcategory of $\Sigma\g\Mod$. 
\end{enumerate}
\end{proposition}

\begin{proof} (1): 
We claim that  $\Ext^1_\Sigma(\mathfrak{Z}\otimes_{\underline{A}}M,\mathfrak{Z}\otimes_{\underline{A}}N)$ is an epimorphic image of $\Ext^1_{\underline{A}}(M,N)$, for any $\underline{A}$-modules $M$ and $N$. Indeed,  
consider the composition of induced morphisms by  $L_{\underline{\cal A}}$ y $\mathfrak{G}$, respectively,
$$\Ext^1_{\underline{A}}(M,N)\rightmap{}\Ext^1_{\underline{\cal A}}(M,N)\rightmap{}\Ext^1_{\cal C}(\mathfrak{Z}\otimes_{\underline{A}} M,\mathfrak{Z}\otimes_{\underline{A}} N).$$
The second one is an isomorphism by (\ref{R: Sauter}). The first one is well defined because $L_{\underline{\cal A}}:\underline{A}\g\Mod\rightmap{}\underline{\cal A}\g\Mod$ is exact, and is  surjective because $\underline{\cal A}$ is a Roiter interlaced weak ditalgebra and we can apply \cite[(11.5)]{hsb}, which means that any conflation $\zeta$ of $\underline{\cal A}\g\Mod$ is equivalent to one of the form $L_{\underline{\cal A}}(\zeta')$, for some conflation $\zeta'$ in $\underline{A}\g\Mod$. So, the composition is surjective and we can proceed, as in the proof of \cite[(13.7)]{hsb}, to prove (1).  
\medskip

(2): 
 Assume first that  $M\in {\cal C}$. Then,  we may suppose that  $M=\mathfrak{Z}\otimes_{\underline{A}}M'$, for some $\underline{A}$-module $M'$. Since $\underline{A}$ is finite-dimensional, we have a filtration of $M'$ of the form 
 $$0 = M'_\ell \subseteq  M'_{\ell-1} \subseteq \cdots \subseteq M'_1\subseteq M'_0=M',$$
with $M'_i=J^iM'$, for $i\in [0,\ell]$,  where $J$ denotes the Jacobson radical of the algebra $\underline{A}$. Since  each $\underline{A}/J$-module $M'_i/M'_{i+1}$ is semisimple, we see that $M'$ has an ${\cal S}$-filtration, where ${\cal S}$ denotes the set 
$\{S_v\}_{v\in {\cal P}}$ of simple $\underline{A}$-modules. Since $\mathfrak{G}L_{\underline{\cal A}}\cong\mathfrak{Z}\otimes_{\underline{A}}-$ is an exact functor and  $\Delta'_v=\mathfrak{Z}\otimes_{\underline{A}}S_v$, for $v\in {\cal P}$,  from the second observation in (\ref{R: general S-filtrations}),  we immediately obtain that $M$ admits a $\Delta'$-filtration. So, $M\in \widetilde{\cal F}(\Delta')$. 

  The proof  of the other inclusion follows from the first observation of (\ref{R: general S-filtrations}) applied to the subcategory ${\cal D}={\cal C}$ of $\Sigma\g\Mod$, because $\Delta'\subseteq {\cal C}$. 
\end{proof}

\begin{remark}\label{El caso cal(A)=cal(A)(Delta)} In the following, we only consider interlaced weak ditalgebras $\underline{\cal A}$ that are ${\cal P}$-strict interlaced weak ditalgebras. We denote by $\Gamma$  the right algebra of $\underline{\cal A}$ and consider the family of $\Gamma$-modules $\{\Delta'_v=\Gamma\otimes_{\underline{A}}S_v\}_{v\in {\cal P}}$. From \cite[(13.7)]{hsb}, we know that 
$({\cal P},\leq,\{\Delta'_v\}_{v\in {\cal P}})$ is an admissible homological system for $\Gamma$. Thus, we have the full subcategory $\widetilde{\cal F}(\Delta')$ of $\Gamma\g\Mod$. 
\end{remark}

\begin{proposition}\label{P: H(N) gen implies N gen}
 Consider the  functor $H:\underline{\cal A}\g\Mod\rightmap{}\Gamma\g\Mod$, as in (\ref{L: el funtor F fiel, pleno y exacto}), where $\Gamma$ is the right algebra of $\underline{\cal A}$. If $N\in \underline{\cal A}\g\Mod$ is such that $H(N)$  is a generic $\Gamma$-module, then $N$ is a pregeneric $\underline{\cal A}$-module with $\Endol{N}\leq \Endol{H(N)}$. 
\end{proposition}

\begin{proof} Set  $E:=\End_{\underline{\cal A}}(N)^{op}$.  So, $N$ is a right $E$-module with action  $ne=e^0(n)$, for $e\in E$ and $n\in N$. Then $M:=H(N)$ is a right $E$-module by restriction through the isomorphism $H:\End_{\underline{\cal A}}(N)^{op}\rightmap{}\End_\Gamma(M)^{op}$. This action is given by $he=e\circ h$, for $e\in E$ and $h\in \Hom_{\underline{\cal A}} (\underline{A},N)$. It is determined by  the canonical action of $\End_\Gamma(M)^{op}$ on $M$, so $\Endol{M}=\ell_E(M)$, which is finite by assumption.  

 Since $M$ is generic, the algebra $E$ is local with nilpotent radical $\frak{r}$, see \cite[(4.2 and 4.4)]{CB3}.   
Assume that $\frak{r}^l=0$ and define $N_i:=N\frak{r}^i$, for all $i$. Then, we have the filtration of $N$ by right $E$-modules 
$$0=N_l\subset N_{l-1}\subset\cdots\subset N_1\subset N_0=N,$$
such that each factor $N_i/N_{i+1}$ is a right $D$-module with $D:=E/\frak{r}$. 
 
  We will show that $\ell_E(N)$ is finite. Since $\Endol{N}=\ell_E(N)$, we will get from this that $N$ is a  pregeneric $\underline{\cal A}$-module. Define $M_i:=\{h\in \Hom_{\underline{\cal A}}(\underline{A},N)\mid h^0(1)\in N_i\}$, for all $i$. Then, we have the following filtration of $M$ by right $E$-modules 
$$0=M_l\subseteq M_{l-1}\subseteq\cdots\subseteq M_1\subseteq M_0=M,$$
which satisfy $M_i\frak{r}\subset M_{i+1}$. Since $\ell_E(M)$ is finite, each factor $M_i/M_{i+1}$ is a $D$-vector space with finite dimension $d_i$.  It will be enough to  show that $\dim_DN_i/N_{i+1}\leq d_i$, for each $i$. 

For a fixed $i$, given $n_1,\ldots,n_{d_i+1}\in N_i$, we consider the morphisms of $\underline{A}$-modules $h^0_s\in \Hom_{\underline{A}}(\underline{A},N)$ such that $h^0_s(1)=n_s$, for $s\in [1,d_i+1]$. Then, we have $h_s:=(h^0_s,0)\in \Hom_{\underline{\cal A}}(\underline{A},N)$. In fact, we have that every $h_s\in M_i$. Thus, there are elements in $e_1,\ldots,e_{d_i+1}\in E$ which are not all zero modulo $\frak{r}$, such 
$$\sum_{s=1}^{d_i+1}e_s\circ h_s=\sum_{s=1}^{d_i+1}h_se_s\in M_{i+1}.$$ 
Then, evaluating at 1 the first components of these morphisms, we have 
$$\sum_{s=1}^{d_i+1}n_se_s=\sum_{s=1}^{d_i+1}e^0_s(n_s)=\sum_{s=1}^{d_i+1}e^0_sh_s^0(1)\in N_{i+1}.$$
So, the elements $n_1,\ldots,,n_{d_i+1}$ are $D$-linearly dependent modulo $N_{i+1}$. From this our result follows. 
\end{proof}

\begin{proposition}\label{L: I tilde  presentada por cal(A)}\label{P: filtrados e inducidos}  Let $\underline{\cal A}$ be  any ${\cal P}$-strict interlaced weak ditalgebra and $\Gamma$  the right algebra of $\underline{\cal A}$. Consider the family $\Delta'=\{\Delta'_v\}_{v\in {\cal P}}$ of the $\Gamma$-modules $\Delta'_v=\Gamma\otimes_{\underline{A}}S_v$, for $v\in {\cal P}$. From \cite[(13.7)]{hsb}, we know that 
$({\cal P},\leq,\{\Delta'_v\}_{v\in {\cal P}})$ is an admissible homological system for $\Gamma$. With the notation of (\ref{L: F es Hom(A,L)}), the subcategory $\widetilde{\cal I}$ of $\Gamma\g\Mod$ admits the  ${\cal P}$-presentation $H:\underline{\cal A}\g\Mod\rightmap{}\widetilde{\cal I}$,   with $\widetilde{\cal F}(\Delta')=\widetilde{\cal I}$. 
\end{proposition}

\begin{proof} By (\ref{L: tilde(I) cerrada bajo extensiones}), $\widetilde{\cal I}$ is closed under extensions and direct summands. It is clearly closed under coproducts. By (\ref{L: F es Hom(A,L)}), we know that $\Gamma\otimes_{\underline{A}}-\cong HL_{\underline{\cal A}}$ and $H:\underline{\cal A}\g\Mod\rightmap{}\widetilde{\cal I}$ is an exact equivalence. From  \cite[(12.11)]{hsb}, the composition of the following morphisms, induced by $L_{\underline{\cal A}}$ and $H$, is surjective
$$\Ext^1_{\underline{A}}(M,N)\rightmap{}\Ext^1_{\underline{\cal A}}(M,N)\rightmap{}\Ext^1_{\cal C}(\Gamma\otimes_{\underline{A}} M,\Gamma\otimes_{\underline{A}} N).$$
Thus the second one is surjective. But it  was already injective, because $H$ is full and faithful. This implies by (\ref{R: Sauter}) that $H$ is an equivalence of exact categories.

    By (\ref{P: H(N) gen implies N gen}), we have  condition (3) in the definition of ${\cal P}$-presentation. The equality $\widetilde{\cal I}=\widetilde{\cal F}(\Delta')$ follows from (\ref{P: structure of presented subcats})(2). 
\end{proof}

\begin{definition}\label{D: admissible equivalences} 
 Assume that $({\cal C},{\cal E})$ and $({\cal C}',{\cal E}')$ are admissible exact subcategories of $\Sigma\g\Mod$ and $\Sigma'\g\Mod$, respectively, for some finite-dimensional algebras $\Sigma$ and $\Sigma'$. Then, an \emph{admissible equivalence} 
 $\mathfrak{U} : ({\cal C},{\cal E}) \rightmap{}({\cal C}',{\cal E}')$ is an equivalence of exact categories satisfying the following conditions: 
\begin{enumerate}
\item There is  a finite-dimensional $\Sigma'\g\Sigma$-bimodule $\mathfrak{X}$  such that the functor $\mathfrak{X}\otimes_{\Sigma}-:\Sigma\g\Mod\rightmap{}\Sigma'\g\Mod$ restricts to a functor ${\cal C}\rightmap{}{\cal C}'$ isomorphic to $\mathfrak{U}$.
\item  There is some $c\in \hueca{N}$ such that, for every $M \in  {\cal C}$, we have:  $$\Endol{M}\leq c\times \Endol{\mathfrak{U}(M)} \hbox{ \ and \ } 
\dim_kM\leq c\times \dim_k\mathfrak{U}(M).$$ 
\end{enumerate}
\end{definition}

With the preceding notation, we have: 

\begin{lemma}\label{L: endols y equivalencias admisibles} 
Assume that 
$\mathfrak{U} : ({\cal C},{\cal E}) \rightmap{}({\cal C}',{\cal E}')$ 
is an admissible equivalence and let 
 $M\in {\cal C}$. Then, 
$\Endol{\mathfrak{U}(M)}\leq \dim_k\mathfrak{X}\times \Endol{M}$. Moreover,  $M$ is a generic $\Sigma$-module in ${\cal C}$ iff $\mathfrak{U}(M)$ is a generic $\Sigma'$-module in ${\cal C}'$. 
\end{lemma}

\begin{proof} We have that $\Endol{\mathfrak{U}(M)}=\Endol{\mathfrak{X}\otimes_{\Sigma}M}\leq \dim_k \mathfrak{X}\times \Endol{M}$. By the second condition in the definition of admissible equivalence, we know that $M$ is infinite-dimensional iff $\mathfrak{U}(M)$ is so. So, if $M\in {\cal C}$ is a generic $\Sigma$-module, we get that $\mathfrak{U}(M)$ is a generic $\Sigma'$-module. Conversely, if $\mathfrak{U}(M)$ is a generic $\Sigma'$-module, for some $M\in {\cal C}$,  since $\mathfrak{U}$ is an admissible equivalence, we get $\Endol{M}\leq c\times \Endol{\mathfrak{U}(M)}$.  Hence,  $M$ is a  generic $\Sigma$-module. 
\end{proof}

\begin{remark}\label{R: Omega es equiv admis}  Let $\Omega:\Gamma\g\Mod\rightmap{}\Lambda\g\Mod$ be an equivalence of categories, where $\Gamma$ and $\Lambda$ are finite-dimensional algebras. Consider an  admissible exact subcategory ${\cal C}$ of $\Gamma\g\Mod$ as in (\ref{D: admissible exact cats}), and let ${\cal C}'$ be the minimal full and replete subcategory of $\Lambda\g\Mod$ containing all the objects of the form $\Omega(M)$ with $M\in {\cal C}$. Let ${\cal E}$ and ${\cal E}'$ be the inherited exact structures on ${\cal C}$ and ${\cal C}'$ from $\Gamma\g\Mod$ and $\Lambda\g\Mod$, respectively. Then, from Morita Theorem, $\Omega$  restricts to an equivalence of exact categories $\Omega_\vert:({\cal C},{\cal E})\rightmap{}({\cal C}',{\cal E}')$. This equivalence of exact categories  preserves coproducts and compact objects.  Moreover, every ${\cal E}'$-projective $M'\in {\cal C}'$ is of the form $\Omega(M)\cong M'$, for some ${\cal E}$-projective $M\in {\cal C}$. Since such an $M$ is a coproduct of compact objects, so is $\Omega(M)$ (and so is $M')$. Thus ${\cal C}'$ is an  admissible exact subcategory of $\Lambda\g\Mod$. Using again Morita Theorem, one shows that $\Omega_\vert:({\cal C},{\cal E})\rightmap{}({\cal C}',{\cal E}')$ is an admissible equivalence. 
\end{remark}

\begin{lemma}\label{L: P-present compos con admis equiv es P-present} 
Let $\Sigma$ and $\Sigma'$ be finite-dimensional algebras. 
Assume that an admissible exact subcategory $({\cal C}, {\cal E})$ of $\Sigma\g\Mod$ admits a ${\cal P}$-presentation $\mathfrak{G}:\underline{\cal A}\g\Mod\rightmap{}{\cal C}$ and that $\mathfrak{U}:({\cal C},{\cal E})\rightmap{}({\cal C}',{\cal E}')$ is an admissible equivalence onto the  exact admissible subcategory ${\cal C}'$ of $\Sigma'\g\Mod$.  Then, the composition $\mathfrak{U}\mathfrak{G}:\underline{\cal A}\g\Mod\rightmap{}{\cal C}'$ is a ${\cal P}$-presentation of ${\cal C}'$. 
\end{lemma}

\begin{proof} Clearly, any composition of equivalences of exact categories is an equivalence of exact categories. So $\mathfrak{U}\mathfrak{G}:\underline{\cal A}\g\Mod\rightmap{}{\cal C}'$ is an equivalence of exact categories. By assumption, $\mathfrak{G}L_{\underline{\cal A}}\cong \mathfrak{Z}\otimes_{\underline{A}}-$, for some finite-dimensional $\Sigma\g\underline{A}$-bimodule $\mathfrak{Z}$ and $\mathfrak{U}\cong \mathfrak{X}\otimes_{\Sigma}-$, for some finite-dimensional $\Sigma'\g \Sigma$-bimodule $\mathfrak{X}$. Therefore, 
$$\mathfrak{U}\mathfrak{G}L_{\underline{\cal A}}\cong \mathfrak{X}\otimes_{\Sigma}(\mathfrak{Z}\otimes_{\underline{A}}-)\cong (\mathfrak{X}\otimes_{\Sigma}\mathfrak{Z})\otimes_{\underline{A}}-$$
where $\mathfrak{X}\otimes_{\Sigma}\mathfrak{Z}$ is a finite-dimensional $\Sigma'\g \underline{A}$-bimodule.  

Suppose that $M'$ is generic $\Sigma'$-module in ${\cal C}'$. Then, $M'\cong \mathfrak{U}(M)$, for some $M\in {\cal C}$, which is a generic $\Sigma$-module in ${\cal C}$ by (\ref{L: endols y equivalencias admisibles}). 
Then, $M\cong \mathfrak{G}(N)$ for some 
pregeneric $N\in \underline{\cal A}\g\Mod$. Thus, $\mathfrak{U}\mathfrak{G}(N)\cong M$. 
\end{proof}

\begin{definition}\label{D: normal generic module} A pregeneric $\underline{\cal A}$-module $M$ will be called \emph{a normal pregeneric} $\underline{\cal A}$-module if it has a structure of an 
$\underline{A}\g k(x)$-bimodule such that
$$\End_{\underline{\cal A}}(M)^{op}=D\oplus {\cal R},$$
where $D$ is a subalgebra, ${\cal R}=\rad\End_{\underline{\cal A}}(M)^{op}$, and there is an isomorphism of algebras $k(x)\rightmap{}D$ which maps any $r\in k(x)$ on 
$L_{\underline{\cal A}}(\mu_r)$. Here,  $\mu_r\in \End_{\underline{A}}(M)^{op}$ denotes the right multiplication by $r$.
\end{definition}

We already know that,  if $\underline{\cal A}$ is not wild, any pregeneric $\underline{\cal A}$-module is isomorphic to a normal one, see (\ref{T: pregens de underline(A)}). 

\begin{proposition}\label{P: endols y pregen normales}\label{L: endol M leq endol P otimes M} Let $\mathfrak{G}:\underline{\cal A}\g\Mod\rightmap{}{\cal C}$ be a ${\cal P}$-presentation of an admissible exact subcategory ${\cal C}$ of $\Sigma\g\Mod$, with $\mathfrak{G}L_{\underline{\cal A}}\cong \mathfrak{Z}\otimes_{\underline{A}}-$. Here, again, $\Delta'_u=\mathfrak{Z}\otimes_{\underline{A}}S_u$, for $u\in {\cal P}$. If   $M\in \underline{\cal A}\g\Mod$ is a normal pregeneric module,  we have:
\begin{enumerate}
\item $\Endol{M}=\dim_{k(x)}M$ \hbox{ \ } and \hbox{ \ } 
$\Endol{
\mathfrak{Z}\otimes_{\underline{A}}M
}=
\dim_{k(x)}(\mathfrak{Z}\otimes_{\underline{A}}M)$; 
\item $\dim_{k(x)}(\mathfrak{Z}\otimes_{\underline{A}}M)=\sum_{u\in {\cal P}}\dim_{k(x)}e_uM\times\dim_k\Delta'_u$.
\end{enumerate} 
Hence,  
$\Endol{M}\leq   
  \Endol{\mathfrak{G}(M)}$. Moreover, 
for $N\in \underline{\cal A}\g\mod$, we have 
\begin{enumerate}
\item[{\it 3.}]$\dim_k\mathfrak{G}(N)=\dim_k(\mathfrak{Z}\otimes_{\underline{A}}N)=\sum_{u\in {\cal P}}\dim_ke_uN\times\dim_k\Delta'_u.$
\end{enumerate}
Hence, $\dim_kN\leq \dim_k\mathfrak{G}(N)$. 
\end{proposition}

\begin{proof} (1): The first equality in (1) follows immediately from the given decomposition of  $\End_{\underline{\cal A}}(M)^{op}$ in (\ref{D: normal generic module}). Now, consider the isomorphism of algebras 
$$\End_{\underline{\cal A}}(M)^{op}\rightmap{}\End_\Sigma(\mathfrak{Z}\otimes_{\underline{A}}M)^{op}$$ given by the composition of $\mathfrak{G}$ with the conjugation $c_\sigma$ associated the natural isomorphism $\sigma:\mathfrak{Z}\otimes_{\underline{A}}M\rightmap{}\mathfrak{G}L_{\underline{\cal A}}(M)$. Then, we have the decomposition
$$\End_{\Sigma}(\mathfrak{Z}\otimes_{\underline{A}}M)^{op}=c_\sigma \mathfrak{G}(D)\oplus c_\sigma \mathfrak{G}({\cal R}),$$
where $c_\sigma \mathfrak{G}({\cal R})=\rad\End_\Sigma(\mathfrak{Z}\otimes_{\underline{A}} M)^{op}$ and we have an isomorphism of algebras $k(x)\rightmap{}c_\sigma \mathfrak{G}(D)$ mapping each $r\in k(x)$ onto $c_\sigma \mathfrak{G} L_{\underline{\cal A}}(\mu_r)=id\otimes \mu_r$.  The second equality of (1) follows from this. 

 We know that the functor $\mathfrak{Z}\otimes_{\underline{A}}-:\underline{A}\g\Mod\rightmap{}\Sigma\g\Mod$ is isomorphic to $\mathfrak{G}L_{\underline{\cal A}}:\underline{A}\g\Mod\rightmap{}\Sigma\g\Mod$, which is exact. So, the first functor is also exact. Consider the $k(x)$-algebra $\underline{A}^{k(x)}:=\underline{A}\otimes_kk(x)$. Since   the $\underline{A}\g k(x)$-bimodules can  be considered as $\underline{A}^{k(x)}$-modules, the preceding tensor product functor 
 restricts to an exact functor $\mathfrak{Z}\otimes_{\underline{A}}-:\underline{A}^{k(x)}\g\Mod\rightmap{}\Sigma^{k(x)}\g\Mod$. Moreover, there is an isomorphism  between the latter and the functor  $\mathfrak{Z}^{k(x)}\otimes_{\underline{A}^{k(x)}}-:\underline{A}^{k(x)}\g\Mod\rightmap{}\Sigma^{k(x)}\g\Mod$.

 Since $k(x):k$ is a MacLane separable  field extension, $\rad(\underline{A}^{k(x)})=\rad(\underline{A})\otimes_kk(x)$, see  
\cite[(3.3)]{Ka} and \cite{Baut-Yingbo}, so the simple $\underline{A}^{k(x)}$-modules
have the form  $S_u\otimes_k k(x)$, where $\{S_u\}_{u\in {\cal P}}$ are the  simple $\underline{A}$-modules.  

Since the $\underline{A}^{k(x)}$-module $M$ has finite $k(x)$-dimension, it has a finite-composition series. This series is transformed by the exact functor $\mathfrak{Z}^{k(x)}\otimes_{\underline{A}^{k(x)}}-$ onto a $\Delta'_{k(x)}$-filtration of $\mathfrak{Z}^{k(x)}\otimes_{\underline{A}^{k(x)}}M\cong \mathfrak{Z}\otimes_{\underline{A}}M$, where  
$\Delta'_{k(x)}=\{\Delta'_u\otimes_kk(x)\}_{u\in {\cal P}}$. 

 As we remarked before, the indecomposable projective $\underline{A}$-modules are bricks, hence the indecomposable projective modules over the $k(x)$-algebra $\underline{A}^{k(x)}$  are bricks too. Then, 
the multiplicity of the simple $\underline{A}^{k(x)}$-module $S_u\otimes_kk(x)$ in the given composition series for $M$ is
$ \dim_{k(x)} e_uM$. So,  the multiplicity of the $\Sigma^{k(x)}$-module $\Delta'_u\otimes_kk(x)$ in the considered tensored filtration for $\mathfrak{Z}^{k(x)}\otimes_{\underline{A}^{k(x)}}M$ is the same number. Hence,
$$\dim_{k(x)}\mathfrak{Z}^{k(x)}\otimes_{\underline{A}^{k(x)}}M=\sum_u\dim_{k(x)}e_uM\times \dim_{k(x)}(\Delta'_u\otimes_kk(x)).$$ 
From this and (1), we get (2). As a consequence, we have 
$$\Endol{M}=
  \sum_u\dim_{k(x)}e_uM
  \leq
  \sum_u\dim_{k(x)}e_uM\times \dim_k\Delta'_u=
  \Endol{\mathfrak{G}(M)}.
$$
Similarly, working with the exact functor 
$\mathfrak{Z}\otimes_{\underline{A}}-:\underline{A}\g\mod\rightmap{}\Sigma\g\mod$ and $N\in \underline{\cal A}\g\mod$, instead of $\mathfrak{Z}^{k(x)}\otimes_{\underline{A}^{k(x)}}-$ and $M$, we obtain the last statement of our Proposition.  
\end{proof}

\begin{corollary}\label{C: cal(A) pregen tame-> cal(F)(Delta') gen tame} Let $\mathfrak{G}:\underline{\cal A}\g\Mod\rightmap{}{\cal C}$ be a ${\cal P}$-presentation of an admissible exact subcategory ${\cal C}$ of $\Sigma\g\Mod$. Assume that $\underline{\cal A}$ is pregenerically tame, then ${\cal C}=\widetilde{\cal F}(\Delta')$ and ${\cal F}(\Delta')$ is generically tame. 
\end{corollary}

\begin{proof} Let $d\in \hueca{N}$ and $M\in {\cal C}$ a generic $\Sigma$-module with $\Endol{M}\leq d$. Since $\mathfrak{G}$ is a ${\cal P}$-presentation, we get $M\cong \mathfrak{G}(N)$ for some pregeneric $N\in \underline{\cal A}\g\Mod$.  From (\ref{P: endols y pregen normales}), we have 
$\Endol{N}\leq \Endol{\mathfrak{G}(N)}=\Endol{M}\leq d$. By assumption, $\underline{\cal A}\g\Mod$ admits only finitely many non-isomorphic  pregeneric $\underline{\cal A}$-modules $N$ with $\Endol{N}\leq d$, so $\widetilde{\cal F}(\Delta')$ admits only 
finitely many non-isomorphic  generic 
$\Sigma$-modules $M$ with endolength bounded by $d$. 
\end{proof}

\begin{remark}\label{R: wildness characterizations} Assume that $\Lambda$ is a finite-dimensional algebra and ${\cal C}$ a full subcategory of $\Lambda\g\mod$, which is closed under direct summands and direct sums. Then,  ${\cal C}$ is wild, as in \cite[(1.2)]{bpsqh}, if and only if 
there is a $\Lambda\g k\langle x,y\rangle$-bimodule  $C$ such that 
$C_{k\langle x,y\rangle}$ is finitely generated and the functor 
 $C\otimes_{k\langle x,y\rangle}-  : k\langle x, y\rangle\g\mod\rightmap{}{\cal C} $ preserves indecomposables and isomorphism classes. 
Indeed, the equivalence of these statements  is  well known  for the particular case ${\cal C}=\Lambda\g\mod$, see \cite{D} and \cite{CB1}. For a general ${\cal C}$ the proof is the same. 
\end{remark}

\begin{proposition}\label{R: underline(A) wild --> F(Delta) wild} Assume that $\mathfrak{G}:\underline{\cal A}\g\Mod\rightmap{}{\cal C}$ is a ${\cal P}$-presentation for an  admissible exact  subcategory ${\cal C}$ of $\Sigma\g\Mod$. With the notation of  (\ref{P: structure of presented subcats}), we have ${\cal C}=\widetilde{\cal F}(\Delta')$. Then, 
 $\underline{\cal A}$ is wild iff ${\cal F}(\Delta')$ is wild. 
 \end{proposition}
 
 \begin{proof} Assume that $\underline{\cal A}$ is wild.  
 Consider a composition functor 
$$k\langle x,y \rangle\g\Mod\rightmap{ \ \ Z\otimes_{k\langle x,y \rangle}- \ \ }\underline{A}
 \g\Mod\rightmap{ \ \ L_{\underline{\cal A}} \ \  }\underline{\cal A}\g\Mod$$
 that preserves isoclasses of indecomposables as in \cite[(6.12)]{bpsqh}. Its restriction to finite-dimensional modules,  composed with the equivalence $\mathfrak{G}:\underline{\cal A}\g\mod\rightmap{}{\cal F}(\Delta')$, determines a functor 
$$\mathfrak{G}L_{\underline{\cal A}}(Z\otimes_{k\langle x,y \rangle}-):k\langle x,y \rangle\g\mod\rightmap{}{\cal F}(\Delta')$$
which preserves isoclasses and indecomposables, see \cite[(1.2)]{bpsqh}. Moreover,   we have  
$\mathfrak{G}L_{\underline{\cal A}}(Z\otimes_{k\langle x,y \rangle}-)\cong (\mathfrak{Z}\otimes_{\underline{A}} Z)\otimes_{k\langle x,y\rangle}-$, 
where 
$\mathfrak{Z}\otimes_{\underline{A}} Z$ is a finitely generated  $k\langle x,y\rangle$-module. From 
(\ref{R: wildness characterizations}), we obtain that  ${\cal F}(\Delta')$ is wild. 

In order to prove the converse, by contradiction, assume that ${\cal F}(\Delta')$ is wild but $\underline{\cal A}$ is not wild. 
 From \cite[(10.4)]{bpsqh}, we know that $\underline{\cal A}$ is strictly tame, as in \cite[(10.3)]{bpsqh}. This implies that ${\cal F}(\Delta')$ is stricty tame, see the argument in the second part of the proof of \cite[(1.3)]{bpsqh}. This contradicts \cite[(1.3)]{bpsqh}. 

\end{proof}

\begin{lemma}\label{P: F(Delta') wild --> F(Delta') not pregen tame} Assume that $\mathfrak{G}:\underline{\cal A}\g\Mod\rightmap{}{\cal C}$ is a 
 ${\cal P}$-presentation of an  exact admissible subcategory ${\cal C}$ of $\Sigma\g\Mod$. As in (\ref{P: structure of presented subcats}), we have ${\cal C}=\widetilde{\cal F}(\Delta')$. If $\underline{\cal A}$ is wild, then ${\cal F}(\Delta')$ is not generically tame. 
\end{lemma}

\begin{proof} By assumption, there is an $\underline{A}\g k\langle x,y\rangle$-bimodule $Z$, which is free finitely generated as  $k\langle x,y\rangle$-module,
such that the composition of functors
$$k\langle x,y\rangle\g\Mod\rightmap{ \ \ Z\otimes_{k\langle x,y\rangle}- \ \ }\underline{A}\g\Mod\rightmap{L_{\underline{\cal A}}}\underline{\cal A}\g\Mod$$
preserves isoclasses of indecomposables.
We also know that the composition  
$$\underline{A}\g\Mod\rightmap{L_{\underline{\cal A}}}\underline{\cal A}\g\Mod\rightmap{\mathfrak{G}}\widetilde{\cal F}(\Delta')$$
 is isomorphic to $\mathfrak{Z}\otimes_{\underline{A}}-$, for a finite-dmensional $\Sigma\g \underline{A}$-bimodule $\mathfrak{Z}$. Then, the functor 
$$W:=\mathfrak{Z}\otimes_{\underline{A}}Z\otimes_{k\langle x,y\rangle}-:
k\langle x,y\rangle\g\Mod\rightmap{}\widetilde{\cal F}(\Delta'),$$
 preserves isoclasses of  indecomposables, see \cite[(6.12)]{bpsqh}. As in 
\cite[(31.3) and (31.5)]{BSZ}, we  consider the $k\langle x,y\rangle$-modules $H_\lambda=k(x)[y]/(y-\lambda)$, for $\lambda\in k$, where $k\langle x,y\rangle$ acts by restriction through the epimorphism of algebras
$$k\langle x,y\rangle\rightmap{}k[x,y]\rightmap{}k[x,y]_{k[x]^*}\cong k(x)[y].$$
The family $\{H_\lambda\}_{\lambda\in k}$ is an infinite family of pairwise non-isomorphic pregeneric $k\langle x,y\rangle$-modules with endolength 1. Set $G_\lambda:=  W(H_\lambda)$, for $\lambda\in k$. So $\{G_\lambda\}_{\lambda\in k}$, is an infinite family of pairwise non-isomorphic indecomposable  $\Sigma$-modules in $\widetilde{\cal F}(\Delta')$. The $\Sigma\g k\langle x,y\rangle$-bimodule $\mathfrak{Z}\otimes_{\underline{A}}Z$ is a finitely generated right $k\langle x,y\rangle$-module. So there is an epimorphism of $k\langle x,y\rangle$-modules $k\langle x, y\rangle^c\rightmap{}\mathfrak{Z}\otimes_{\underline{A}}Z$, which determines an epimorphism  $(k\langle x,y\rangle \otimes_{k\langle x,y\rangle}H_\lambda)^c\rightmap{}W(H_\lambda)$ of right  $k(x)$-modules. Therefore, 
$\Endol{W(H_\lambda)}\leq \dim_{k(x)}W(H_\lambda)\leq c$. Thus, $\{G_\lambda\}_{\lambda\in k}$ is an infinite family of indecomposables with endolength bounded by $c$, they all are infinite-dimensional over $k$ because they are non-trivial 
 $k(x)$-vector spaces. So ${\cal F}(\Delta')$ is not generically tame.
\end{proof}

\begin{corollary}\label{C: F(Delta') pregen tame iff cal(A) pregen tame} With the preceding notations, $\underline{\cal A}$ is pregenerically tame iff ${\cal F}(\Delta')$ is generically tame. 
\end{corollary}

\begin{proof} If $\underline{\cal A}$ is pregenerically tame, by (\ref{C: cal(A) pregen tame-> cal(F)(Delta') gen tame}), ${\cal F}(\Delta')$ is generically tame. Conversely, if ${\cal F}(\Delta')$ is generically tame, by (\ref{P: F(Delta') wild --> F(Delta') not pregen tame}),  $\underline{\cal A}$ is not wild. Then,  by (\ref{T: pregens de underline(A)}), we get that $\underline{\cal A}$ is pregenerically tame.  
\end{proof}

\begin{proposition}\label{P: F(Delta) wild --> F(Delta) not pregen tame} Assume that $\mathfrak{G}:\underline{\cal A}\g\Mod\rightmap{}{\cal C}$ is a 
 ${\cal P}$-presentation of an  exact admissible subcategory ${\cal C}$ of $\Sigma\g\Mod$. As in (\ref{P: structure of presented subcats}), we have ${\cal C}=\widetilde{\cal F}(\Delta')$. 
The following statements are equivalent:
\begin{enumerate}
\item $\underline{\cal A}$ is wild.
\item ${\cal F}(\Delta')$ is wild.
\item ${\cal F}(\Delta')$ is not generically tame.
\end{enumerate}
In particular, ${\cal F}(\Delta')$ is tame iff ${\cal F}(\Delta')$ is generically tame. 
\end{proposition}

\begin{proof} The equivalence of the first two items is just (\ref{R: underline(A) wild --> F(Delta) wild}).  

If $\underline{\cal A}$ is wild, by (\ref{P: F(Delta') wild --> F(Delta') not pregen tame}), ${\cal F}(\Delta')$ is not generically tame. So $1$ implies $3$. Finally, assume that ${\cal F}(\Delta')$ is not generically tame. From (\ref{C: F(Delta') pregen tame iff cal(A) pregen tame}), we get that $\underline{\cal A}$ is not pregenerically tame. This implies that $\underline{\cal A}$ is wild, because if this was not the case, by (\ref{T: pregens de underline(A)}), it would be pregenerically tame. So $3$ implies $1$.  
\end{proof}

\begin{corollary}\label{C: filtered subcats simultaneously wild} 
Assume that the exact categories $({\cal C},{\cal E})$ of $\Sigma\g\Mod$, and $({\cal C}',{\cal E}')$  of $\Sigma'\g\Mod$  admit ${\cal P}$-presentations $\mathfrak{G}:\underline{\cal A}\g\Mod\rightmap{}{\cal C}$ and 
 $\mathfrak{G}':\underline{\cal A}\g\Mod\rightmap{}{\cal C}'$, respectively. 
 Denote by ${\cal F}(\Delta')$ and ${\cal F}(\Delta'')$ the subcategories of ${\cal C}$ and ${\cal C}'$ determined by the ${\cal P}$-presentations $\mathfrak{G}$ and $\mathfrak{G}'$, respectively. Then,  ${\cal F}(\Delta')$ is  wild iff ${\cal F}(\Delta'')$ is wild. 
\end{corollary}

\section{Covering $\Theta$-filtered modules}\label{S: covering Theta filtered modules}

Now, we proceed to prove 
an adaptation (\ref{T: el teorema de MSX}) of a theorem due to Mendoza, S\'aenz, and Xi, see \cite[\S3]{MSX} and \cite[\S11]{bpsqh}. This is an important step in the proof of our main results. 

\begin{remark}\label{R: TildeF(Delta) cerr bajo sumandos dir}
In this section,
$({\cal P},\leq,\{\Theta_v\}_{v\in {\cal P}})$ denotes a fixed  general homological system for a finite-dimensional algebra $\Upsilon$. It is well known that the subcategory ${\cal F}(\Theta)$ of $\Upsilon\g\mod$ is closed under direct summands, see \cite{MSX}. A short proof of this statement was given in \cite{P}. The argument used in \cite{P} can be adapted to show that the subcategory $\widetilde{\cal F}(\Theta)$ of $\Upsilon\g\Mod$ is closed under direct summands, see \cite{Tesis:Luis Fernando Tzec}. 
As a consequence of this, the indecomposable objects in $\widetilde{\cal F}(\Theta)$ are indecomposable $\Upsilon$-modules. Moreover, from \cite{Tesis:Luis Fernando Tzec}, we also know that $\widetilde{\cal F}(\Theta)$ is closed under coproducts. 
\end{remark}

The following statement is crucial in the proof of (\ref{T: el teorema de MSX}), its dual is verified in the first part of the proof of
\cite[(3.12)]{MSX}.

\begin{lemma}\label{L: de las sucesiones especiales}
For each $ v \in {\cal P}$, there is an exact sequence $$0\rightmap{}V_v\rightmap{}U_v\rightmap{}\Theta_v\rightmap{}0$$ such that $U_v$ is an indecomposable
${\cal F}(\Theta)$-projective and $V_v\in {\cal F}(\Theta)$ has a $\Theta$-filtration with factors of the form $\Theta_{u}$ with $u>v$.
\end{lemma}

We fix a family of special exact sequences, as provided by the last lemma, for the rest of this section.

\begin{lemma}\label{L: suficientes proyectivos en F(Delta)}
For each $M\in \widetilde{\cal F}(\Theta)$, there is an exact sequence
$$0\rightmap{}K\rightmap{}W\rightmap{}M\rightmap{}0$$
in $\widetilde{\cal F}(\Theta)$,
such that $W$ is a direct sum of copies of the modules in $\{U_v\mid v\in {\cal P}\}$.

The family $\{U_v\}_{v\in {\cal P}}$ is a complete set of representatives of the isomorphism classes of the indecomposable $\widetilde{\cal F}(\Theta)$-projective modules.
\end{lemma}

\begin{proof} Since each $U_v$ is an ${\cal F}(\Theta)$-projective module, we know that $\Ext^1_\Upsilon(U_v,\Theta_u)=0$, for all $u\in {\cal P}$.
Since $U_v$ is a finite-dimensional $\Upsilon$-module, the functor
$\Ext^1_\Upsilon(U_v,-)$ commutes with direct sums, and we get that
$\Ext^1_\Upsilon(U_v,V)=0$
for any direct sum $V$ of copies of the modules in $\{\Theta_v\mid v\in {\cal P}\}$. A simple induction argument shows that $\Ext^1_\Upsilon(U_v,M)=0$, for any $M\in \widetilde{\cal F}(\Theta)$. So,  any such $U_v$ is an $\widetilde{\cal F}(\Theta)$-projective.

Notice that $\widetilde{\cal F}(\Theta)$ is  closed under (possibly infinite) direct sums of copies of any finite family of objects in $\widetilde{\cal F}(\Theta)$. Thus, 
if $W=\bigoplus_{j\in J}W_j$, where each $W_j$ is isomorphic to some $U_v$ with $v\in {\cal P}$,  for instance from \cite[(5.2.53)]{Rowen}, we have $\Ext^1_\Upsilon(W,M)=
\prod_j
\Ext^1_\Upsilon(W_j,M)=0$. So $W$ is an $\widetilde{\cal F}(\Theta)$-projective module. Notice also that $\widetilde{\cal F}(\Theta)$ is closed under extensions. 
Now, we can easily adapt the argument in the proof of \cite[(11.2)]{bpsqh} to show the existence of an exact sequence as described in the statement of this lemma, for each $M\in \widetilde{\cal F}(\Theta)$.

If $M$ is any indecomposable $\widetilde{\cal F}(\Theta)$-projective, we have an exact sequence as we have just mentioned, so $M$ is an indecomposable  direct summand of $W$. As a consequence of
Crawley-J\o nsson-Warfield and Azumaya Theorems, see \cite[(12.6; 26.5; and 26.6)]{AF}, we obtain that $M\cong U_v$, for some $v\in {\cal P}$.

 Finally, the fact that the family $\{U_v\}_{v\in {\cal P}}$ consists of pairwise non-isomorphic $\Upsilon$-modules is proved in \cite[(3.7)]{MSX}.
\end{proof}

\begin{lemma}\label{R: CalF(Theta) es admisible} Given any general homological system  $({\cal P},\leq, \{\Theta_v\}_{v\in {\cal P}})$, the  category $\widetilde{\cal F}(\Theta)$ is an admissible exact subcategory of $\Upsilon\g\Mod$. 
\end{lemma}

\begin{proof} If ${\cal E}'$ denotes the exact structure of $\widetilde{\cal F}(\Theta)$ inherited from the standard exact structure of $\Upsilon\g\Mod$, then, from (\ref{L: suficientes proyectivos en F(Delta)}), we know that $\widetilde{\cal F}(\Theta)$ has enough ${\cal E}'$-projectives and every indecomposable ${\cal E}'$-projective object of $\widetilde{\cal F}(\Theta)$ is a direct sum of finite-dimensional ones (thus of compact objects, by (\ref{L: compacts in cal(F)(Delta)})).  From (\ref{R: TildeF(Delta) cerr bajo sumandos dir}), we know that $\widetilde{\cal F}(\Theta)$ is closed under coproducts and direct sums.
\end{proof}

\begin{definition}\label{D: Gamma y Theta}
Define $U:=\bigoplus_{v\in {\cal P}}U_v$ and $\Lambda:=\End_\Upsilon(U)^{op}$. Moreover, consider the family
$\{\Delta_v\}_{v\in {\cal P}}$ of $\Lambda$-modules given by $\Delta_v:=\Hom_\Upsilon(U,\Theta_v)$, for $v\in {\cal P}$.
\end{definition}

It was shown in \cite{MSX} that $({\cal P},\leq,\{\Delta_v\}_{v\in {\cal P}})$ is an admissible homological system for $\Lambda$ such that ${\cal F}(\Theta)$ is equivalent to ${\cal F}(\Delta)$ as exact categories, see also \cite[\S11]{bpsqh}.
In the following, we extend a little their results to show that  $\widetilde{\cal F}(\Theta)$ is equivalent to $\widetilde{\cal F}(\Delta)$ as exact categories, see \cite[(10.20)]{Buhler}.
The quasi-inverse equivalences will be realized by the appropriate  restrictions of the  functors $\mathfrak{H}:=\Hom_\Upsilon(U,-):\Upsilon\g\Mod\rightmap{}\Lambda\g\Mod$ and $\mathfrak{T}:=U\otimes_\Lambda-:\Lambda\g\Mod\rightmap{}\Upsilon\g\Mod$.
We start with the following.

\begin{lemma}\label{L: Hom(U,-) restringe a fiel-pleno-exact de F(Delta)} The functor $\mathfrak{H}$ restricts to a full and faithful exact functor
$$\mathfrak{H}:\widetilde{\cal F}(\Theta)\rightmap{}\Lambda\g\Mod.$$
\end{lemma}

\begin{proof}  The functor $\mathfrak{H}:\widetilde{\cal F}(\Theta)\rightmap{}\Lambda\g\Mod$ is exact because $U$ is  $\widetilde{\cal F}(\Theta)$-projective.  It preserves coproducts because $U$ is finite-dimensional; so, by (\ref{L: suficientes proyectivos en F(Delta)}), it also  preserves projectives.  We know, from (\ref{R: CalF(Theta) es admisible}), that  $\widetilde{\cal F}(\Theta)$ is an admissible exact subcategory of $\Upsilon\g\Mod$   and its  category of compact objects is ${\cal F}(\Theta)$ by (\ref{L: compacts in cal(F)(Delta)}). Moreover, the restriction $\frak{H}_\vert:{\cal F}(\Theta)\rightmap{}\Lambda\g\mod$ is full and faithful by \cite[(11.4)]{bpsqh}. 

Consider the full subcategory $\add(U)$ of
$\Upsilon\g\mod$  formed by the direct summands of finite direct sums of $U$. Using 
\cite[(II.2.1)]{ARS}, we know that
 $\mathfrak{H}$ restricts to an equivalence of categories $\add(U)\rightmap{}\add(\Lambda)$. Since    $\mathfrak{H}$ commutes with direct sums, we have that any projective $\Lambda$-module $P$ is of the form $\frak{H}(Q)\cong P$, for some $\widetilde{\cal F}(\Theta)$-projective $Q$. Thus,  our statement follows from (\ref{P: fiel y pleno en compactos implica fiel y pleno}).  
\end{proof}

\begin{theorem}\label{T: el teorema de MSX}
Given any homological system $({{\cal P}},\leq,\{\Theta_v\}_{v\in {\cal P}})$ for $\Upsilon$,
consider the algebra $\Lambda=\End_\Upsilon(U)^{op}$ and the $\Lambda$-modules $\Delta_v=\Hom_\Upsilon(U,\Theta_v)$, for all $v\in {\cal P}$, as before. 
Then, $({{\cal P}},\leq,\{\Delta_v\}_{v\in {\cal P}})$ is an admissible homological system for $\Lambda$ and  the functors $\mathfrak{H}=\Hom_\Upsilon(U,-)$ and $\mathfrak{T}=U\otimes_\Lambda-$ induce   quasi-inverse equivalences of  exact categories between $\widetilde{\cal F}(\Theta)$ and $\widetilde{\cal F}(\Delta)$.
\end{theorem}

\begin{proof} We already know  that $({{\cal P}},\leq,\{\Delta_v\}_{v\in {\cal P}})$ is an   admissible homological system, see 
\cite[(11.5)]{bpsqh}. Consider the  natural transformations
 $\alpha:\mathfrak{T}\mathfrak{H}\rightmap{}id_{\Upsilon\g\Mod}$
and $\beta:id_{\Lambda\g\Mod}\rightmap{}\mathfrak{H}\mathfrak{T}$ such that 
$\alpha_M:U\otimes_\Lambda \mathfrak{H}(M)\rightmap{}M$ has the recipe  $\alpha_M(u\otimes g)=g(u)$,  for $M\in \Upsilon\g\Mod$; and 
 $\beta_N:N\rightmap{}\mathfrak{H}(U\otimes_\Lambda N)$ has the recipe  $\beta_N(n)[u]=u\otimes n$, for $N\in \Lambda\g\Mod$.

We will show that we have quasi-inverse exact equivalences
 $\mathfrak{H}:\widetilde{\cal F}(\Theta)\rightmap{} \widetilde{\cal F}(\Delta)$  and $\mathfrak{T}: \widetilde{\cal F}(\Delta)\rightmap{}\widetilde{\cal F}(\Theta)$. For this we will show that 
 we have isomorphisms of functors $\alpha:\mathfrak{T}\mathfrak{H}\rightmap{}id_{\widetilde{\cal F}(\Theta)}$ and $\beta:id_{\widetilde{\cal F}(\Delta)}\rightmap{}\mathfrak{H}\mathfrak{T}$.
  In \cite[(11.5)]{bpsqh}, we proved that the restriction  $\alpha_\vert:\mathfrak{T}\mathfrak{H}_\vert \rightmap{}id_{{\cal F}(\Theta)}$ and $\beta_\vert: id_{{\cal F}(\Delta)}\rightmap{}\mathfrak{H}\mathfrak{T}_\vert$ are  natural isomorphisms.    

The argument in the proof of \cite[(11.5) Step 2]{bpsqh} shows that $\Tor_1^{\Lambda}(U,\Delta_u)=0$, for any $v\in {\cal P}$. 
 Since $\Tor_1^\Lambda(U,-)$ commutes with direct sums, see \cite[(5.2.50)]{Rowen}, we get $\Tor^{\Lambda}_1(U,V)=0$, for any direct sum $V$ of objects in $\Delta$.    A simple induction argument shows that $\Tor^{\Lambda}_1(U,N)=0$, for any $N\in \widetilde{\cal F}(\Delta)$. As a consequence of this, we get that  the functor 
 $U\otimes_\Lambda-:\widetilde{\cal F}(\Delta)\rightmap{}\Upsilon\g\Mod$ is  exact. From (\ref{R: general S-filtrations}), we know that  this exact functor restricts to an exact functor  $\frak{T}=U\otimes_\Lambda-:\widetilde{\cal F}(\Delta)\rightmap{}\widetilde{\cal F}(\frak{T}(\Delta))$. But, we have that $\frak{T}(\Delta_u)\cong \frak{T}\frak{H}(\Theta_u)\cong \Theta_u$, as proved in \cite[(11.5)]{bpsqh}, so $\widetilde{\cal F}(\frak{T}(\Delta))=\widetilde{\cal F}(\Theta)$.  
 
From (\ref{R: CalF(Theta) es admisible}), we know that $\widetilde{\cal F}(\Delta)$ and $\widetilde{\cal F}(\Theta)$ are admissible exact subcategories  and their subcategories of compact objects are ${\cal F}(\Delta)$ and ${\cal F}(\Theta)$ by (\ref{L: compacts in cal(F)(Delta)}). 
 Then, we can apply (\ref{P: fiel y pleno en compactos implica fiel y pleno}) to $\alpha$ and $\beta$ to obtain that $\alpha:\mathfrak{T}\mathfrak{H}\rightmap{}id_{\widetilde{\cal F}(\Theta)}$ and $\beta:id_{\widetilde{\cal F}(\Delta)}\rightmap{}\mathfrak{H}\mathfrak{T}$ are isomorphisms. 
\end{proof}

\begin{lemma}\label{L: endolongitudes van bien con H}
 With the preceding notation, for any $N\in \widetilde{\cal F}(\Theta)$, we have
 $$\Endol{\mathfrak{H}(N)}\leq \dim_kU\times \Endol{N}$$
 and, for any $M\in \widetilde{\cal F}(\Delta)$, we have
 $\Endol{U\otimes_\Lambda M}\leq \dim_kU\times \Endol{M}$.
 Therefore, the functors $\mathfrak{H}:\widetilde{\cal F}(\Theta)\rightmap{}\widetilde{\cal F}(\Delta)$ and  $\mathfrak{T}:\widetilde{\cal F}(\Delta)\rightmap{}\widetilde{\cal F}(\Theta)$ preserve generic modules.
\end{lemma}

\begin{proof} Given  $N\in \widetilde{\cal F}(\Theta)$, set  $E:=\End_{\Lambda}(\mathfrak{H}(N))^{op}$, and consider the $\Upsilon\g E$-bimodule
 structure on $N$, where $E$ acts by the right through the algebra isomorphism
 $\alpha_N:=\mathfrak{H}^{-1}:E\rightmap{}\End_{\Upsilon}(N)^{op}$.

If $b=\dim_k U$, there is an epimorphism $\Upsilon^b\rightmap{}U$ of left $\Upsilon$-modules. So, there is a monomorphism
$\mathfrak{H}(N)=\Hom_\Upsilon(U,N)\rightmap{}\Hom_\Upsilon(\Upsilon^b,N)\cong \Hom_\Upsilon(\Upsilon,N)^b\cong N^b$
 of right $E$-modules. Hence,
 $$\Endol{\mathfrak{H}(N)}=\ell_E(\mathfrak{H}(N))\leq b\times \ell_E(N)=b\times \Endol{N}.$$
 The other claim in the statement of this lemma is similar (and well known).
\end{proof}

\begin{remark}\label{R: frak(T) es equiv admis} 
 With the notation of (\ref{T: el teorema de MSX}), the functor $\mathfrak{T}:\widetilde{\cal F}(\Delta)\rightmap{}\widetilde{\cal F}(\Theta)$ is an admissible equivalence as in (\ref{D: admissible exact cats}). Indeed, by (\ref{R: CalF(Theta) es admisible}), $\widetilde{\cal F}(\Delta)$ and $\widetilde{\cal F}(\Theta)$ are admissible exact subcategories of $\Lambda\g\Mod$
 and $\Upsilon\g\Mod$, respectively. The condition $(2)$ in the definition of admissible equivalence for $\mathfrak{T}$ follows from (\ref{L: endolongitudes van bien con H}). 
\end{remark}

\section{Generic modules for ${\cal F}(\Theta)$}

This section is devoted to the proofs of the main results (\ref{T:propiedades de gens de F(Delta)}) and (\ref{T: main thm}). We keep the notation of the preceding section. So, we assume that $\Upsilon$ is a finite-dimensional $k$-algebra  and $({\cal P},\leq,\{\Theta_v\}_{v\in {\cal P}})$ is any homological system for $\Upsilon$, and we will develop the proofs with this notation.

\begin{remark}\label{R: context for proof of thm (1.5)} 
Consider the finite-dimensional algebra $\Lambda=\End_\Upsilon(U)^{op}$,  the admissible homological system ${\cal H}:=({\cal P},\leq,\{\Delta_v\}_{v\in {\cal P}})$ for $\Lambda$ as in (\ref{D: Gamma y Theta}), 
and the strict ${\cal P}$-oriented interlaced weak ditalgebra 
  $\underline{\cal A}=\underline{\cal A}(\Delta)$ associated to ${\cal H}$, see \cite[(5.22)\&(13.2)]{hsb}. 

 We have the  functor   $U\otimes_{\Lambda}- : \Lambda\g\Mod \rightmap{} \Upsilon\g\Mod$, which restricts to the admissible equivalence $\mathfrak{T}:\widetilde{\cal F}(\Delta)\rightmap{}\widetilde{\cal F}(\Theta)$ such that  
$\mathfrak{T}(\Delta_u)\cong  \Theta_u$ 
for each $u\in {\cal P}$, see (\ref{R: frak(T) es equiv admis}). From (\ref{P: filtrados e inducidos}), we have the homological system $\Delta'=\{\Delta'_u\}_{u\in {\cal P}}$ for the right algebra $\Gamma$ of $\underline{\cal A}$, and the ${\cal P}$-presentation  
$H : \underline{\cal A}\g \Mod \rightmap{} \widetilde{\cal F} (\Delta')$. Here, 
 $\Delta'_u=\Gamma\otimes_{\underline{A}}S_u$, for $u\in {\cal P}$.  
From \cite[(13.9)]{hsb}, we know the existence of an equivalence of categories $ \Omega : \Gamma\g\Mod \rightmap{} \Lambda\g\Mod$ such that  $\Omega(\Delta'_u) \cong \Delta_u$, for each $u\in {\cal P}$. Let $P$ be a progenerator in $\mod\g\Gamma$ such that $\Omega\cong P\otimes_{\Gamma}-$. 

From  (\ref{R: Omega es equiv admis}) and (\ref{R: frak(T) es equiv admis}), the exact admissible subcategory $\widetilde{\cal F}(\Theta)$ of $\Upsilon\g\Mod$ admits the ${\cal P}$-presentation  $\mathfrak{G}:=\mathfrak{T}\Omega_\vert H : \underline{\cal A}\g\Mod \rightmap{} \widetilde{\cal F}(\Theta)$, where  $\mathfrak{G}L_{\underline{\cal A}}\cong \mathfrak{Z}\otimes_{\underline{A}}-$, with $\mathfrak{Z}=U\otimes_{\Lambda}P\otimes_\Gamma\Gamma$ and $\mathfrak{Z}\otimes_{\underline{A}}S_u\cong \Theta_u$, for all $u\in {\cal P}$. 

 Theorem (\ref{T:propiedades de gens de F(Delta)}) is a particular case of the following one, when applied to the preceding ${\cal P}$-presentation of $\widetilde{\cal F}(\Theta)$, after  some trivial notational variation. 
 
 Notice that, from (\ref{C: filtered subcats simultaneously wild}), we get that $\widetilde{\cal F}(\Theta)$, $\widetilde{\cal F}(\Delta)$, and $\widetilde{\cal F}(\Delta')$ are simultaneously wild (resp. tame or  generically tame).  
\end{remark}

\begin{theorem}\label{T: (1.3) gral} Consider a fixed finite-dimensional algebra $\Upsilon$ and a general homological system $({\cal P},\leq, \{\Theta_v\}_{v\in {\cal P}})$ for $\Upsilon$. Suppose that $\mathfrak{G} : \underline{\cal A}\g\Mod \rightmap{} \widetilde{\cal F}(\Theta)$ is a ${\cal P}$-presentation for $\widetilde{\cal F}(\Theta)$ such that $\mathfrak{G}(S_v)\cong \Theta_v$, for $v\in {\cal P}$.  
Then, if  ${\cal F}(\Theta)$ is not wild and  $G$ is a generic $\Upsilon$-module for  ${\cal F}(\Theta)$, the following statements hold:
\begin{enumerate}
 \item There is a rational algebra $\Gamma_G$ and a $\Upsilon\g \Gamma_G$-bimodule $Z_G$ such that as a right $\Gamma_G$-module $Z_G$ is free of rank equal to the endolength of $G$. If $Q_G$ is the field of fractions of $\Gamma_G$, then $G\cong Z_G\otimes_{\Gamma_G}Q_G$.
 \item The functor $Z_G\otimes_{\Gamma_G}-:\Gamma_G\g\Mod\rightmap{}\widetilde{\cal F}(\Theta)$ preserves isoclasses and indecomposability.
 \item  The functor $Z_G\otimes_{\Gamma_G}-:\Gamma_G\g\mod\rightmap{}{\cal F}(\Theta)$ preserves Auslander-Reiten sequences. 
\end{enumerate}  
\end{theorem}

\begin{proof} Since ${\cal F}(\Theta)$ is not wild, from (\ref{R: underline(A) wild --> F(Delta) wild}), we have that  
 $\underline{\cal A}$ is not wild. 
 From (\ref{T: pregens de underline(A)}),  we know that, given $d\in \hueca{N}$, there is a minimal ditalgebra  ${\cal B}$ and a full and faithful functor  $F : {\cal B}\g\Mod\rightmap{} \underline{\cal A}\g \Mod$. The functor $F$ is such that, for any indecomposable 
 $M \in  \underline{\cal A}\g\Mod$ with 
 $\Endol{M}\leq d$, there is some 
 $N \in  {\cal B}\g\Mod$ with $F(N) \cong M$. If  $M$ is pregeneric, it is isomorphic to a normal pregeneric $\underline{\cal A}$-module $\underline{M}$.  With the notation of (\ref{T: pregens de underline(A)}), we have: 
$$\mathfrak{G}FL_{\cal B}
\cong 
\mathfrak{G}L_{\underline{\cal A}}(\overline{T}\otimes_B-)\cong \mathfrak{Z}\otimes_{\underline{A}} (\overline{T}\otimes_B -)\cong (\mathfrak{Z}\otimes_{\underline{A}} \overline{T})\otimes_B-.$$

Given a generic $\Upsilon$-module $G$ in $\widetilde{\cal F}(\Theta)$, since $\mathfrak{G}$ is a ${\cal P}$-presentation of $\widetilde{\cal F}(\Theta)$, there is a pregeneric $\underline{\cal A}$-module  $M$ 
such that  $\mathfrak{G}(M)\cong  G$. Hence, $\mathfrak{G}(\underline{M})\cong G$.  

Let $d:=\Endol{G}$, then, by (\ref{P: endols y pregen normales}), we have 
$$\Endol{M}=\Endol{\underline{M}}\leq \Endol{\mathfrak{G}(\underline{M})}=\Endol{G}=d.$$ 
By (\ref{T: pregens de underline(A)}), there is a pregeneric $N\in {\cal B}\g\Mod$ with $F(N)\cong M$. Thus,    
$\mathfrak{G}F(N) \cong G$. 
In fact, 
$N = L_{\cal B}(Be_i \otimes_{Be_i} Q_i)$,  where  $Q_i$ is the field of fractions of  $Be_i$ for some $i$, see (\ref{D: los Qi's}). Consider the $\Upsilon\g B$-bimodule  $T:=\mathfrak{Z}\otimes_{\underline{A}}\overline{T}$ and the $\Upsilon\g k(x)$-bimodule $\underline{G}:=T\otimes_BN$. Then, we have 
an isomorphism of $\Upsilon$-modules  $G\cong \mathfrak{G}FL_{\cal B}(N)\cong T\otimes_BN=\underline{G}$, so $G$ and $\underline{G}$ have the same endolength. Moreover, we have an isomorphism of $\Upsilon\g k(x)$-bimodules 
$$\underline{G}=T\otimes_BN= (\mathfrak{Z} \otimes_{\underline{A}}\overline{T})\otimes_B Be_i \otimes_{Be_i}Q_i\cong Te_i \otimes_{Be_i} Q_i,$$ 
where $ Te_i = 
(\mathfrak{Z} \otimes_{\underline{A}} \overline{T})e_i$ is a  finitely generated right $Be_i$-module. So, there is some $g\in Be_i$ such that $Z_G:=Te_i\otimes_{Be_i}(Be_i)_g$ is a  free  finitely generated right $(Be_i)_g$-module. Moreover, the field of fractions $Q_G$ of the rational algebra  $\Gamma_G:=(Be_i)_g$ coincides with $k(x)=Q_i$, and we have an isomorphism of $\Upsilon\g k(x)$-bimodules 
$$Z_G\otimes_{\Gamma_G}Q_G=Te_i\otimes_{Be_i}(Be_i)_g\otimes_{(Be_i)_g}Q_i\cong Te_i\otimes_{Be_i}Q_i\cong \underline{G}.$$
We have that $\dim_{k(x)}\underline{G}=\Endol{\underline{G}}=\Endol{G}$ and, from the last displayed formula, the rank of $Z_G$ is $\dim_{k(x)}\underline{G}$, so (1) is proved.
\medskip

\noindent(2):  Having in mind (\ref{R: F exact}), we have that  $L_{\cal B}:B\g \Mod\rightmap{}{\cal B}\g\Mod$ preserves isoclasses, indecomposability and maps almost split sequences onto almost split conflations. Since $\mathfrak{G}$ and $F$ are full and faithful functors,  they all preserve isoclasses and indecomposability. Then,   the functor $T\otimes_B-\cong\mathfrak{G} F L_{\cal B}$ preserves isoclasses and indecomposability. The localization morphism $\phi:Be_i\rightmap{}(Be_i)_g=\Gamma_G$ is an epimorphism of algebras, so it induces a full and faithful functor $F_\phi: \Gamma_G\g\Mod\rightmap{}Be_i\g\Mod$ which can be composed with the embedding functor $F_\pi : Be_i\g\Mod \rightmap{}B\g\Mod$, induced by the projection $\pi:B\rightmap{}Be_i$. Then the functor $Z_G\otimes_{\Gamma_G}-\cong  \mathfrak{G} F L_{\cal B}F_\pi F_\phi$ preserves isoclasses and indecomposables.
\medskip

\noindent(3): It follows from (\ref{R: F exact}), that the composition $\mathfrak{G}F:{\cal B}\g\mod\rightmap{}\widetilde{\cal F}(\Theta)\g\mod$ is a full and faithful exact functor between Krull-Schmidt categories with exact structures with almost split conflations. Set $E_n^\lambda:=L_{\cal B}F_\pi F_\phi(\Gamma_G/\langle x-\lambda\rangle^n)$, where $x-\lambda$ is irreducible in $\Gamma_G$ and $n\in \hueca{N}$. Then, $\{E_n^\lambda\}_{n,\lambda}$ are non-isomorphic indecomposable ${\cal B}$-modules and the   following are almost split conflations in ${\cal B}\g\mod$ 
$$\begin{matrix} E_1^\lambda\rightmap{}E_2^\lambda\rightmap{}E_1^\lambda\\
\\ 
 E_n^\lambda\rightmap{}E_{n+1}^\lambda\oplus E_{n-1}^\lambda\rightmap{}E_n^\lambda\hbox{ \ for } n>1.
\end{matrix}$$

Since $\mathfrak{G} F(E_1^\lambda)\cong Z_G\otimes_{\Gamma_G}\Gamma_G/\langle x-\lambda\rangle$, we have that $\dim_k \mathfrak{G} F(E_1^\lambda)$ coincides with the rank of $Z_G$, for all $\lambda$. Hence, from (\ref{T: casi todo M en A-mod tq tau(M) cong M}), we know that there  are only finitely many indecomposable ${\cal B}$-modules of the form $E_1^\lambda$ such that $\mathfrak{G}F(E_1^\lambda)\not\cong \tau \mathfrak{G}F(E_1^\lambda)$.   If this occurs for $\lambda_1,\ldots,\lambda_t\in k$, we can replace the polynomial $g$ used at the beginning  of this proof by $g':=g(x-\lambda_1)\cdots(x-\lambda_t)$ to obtain a rational algebra $\Gamma'_G=(Be_i)_{g'}$ such that $\mathfrak{G}F(E_1^\lambda)\cong \tau \mathfrak{G}F
(E_1^\lambda)$, for all such indecomposables $E_1^\lambda$ of $\Gamma'_G$, and we adjust $Z'_G:=Te_i\otimes_{Be_i}(Be_i)_{g'}$ accordingly. Moreover, (1) and (2), still hold for this $Z'_G$. So we can assume that $\Gamma_G$ and $Z_G$ already satisfy that 
$\mathfrak{G}F(E_1^\lambda)\cong \tau \mathfrak{G}F(E_1^\lambda)$, for all irreducible $x-\lambda$ of $\Gamma_G$, 
  and we can apply \cite[(32.7)]{BSZ}, to obtain that 
$\mathfrak{G}F$ preserves almost split conflations. Then,   so does the composition  $Z_G\otimes_{\Gamma_G}-\cong \mathfrak{G}FL_{\cal B}F_\pi F_\phi$.
\end{proof}

In the following, we keep the notation of $\S7$ and of the preceding proof.

\begin{proposition}\label{P: F(Delta) not wild --> F(Delta) gen tame}
Assume that we have a ${\cal P}$-presentation $\mathfrak{G}:\underline{\cal A}\g\Mod\rightmap{} \widetilde{\cal F}(\Theta)$ such that $\mathfrak{G}(S_v)\cong \Theta_v$, for $v\in {\cal P}$, and  ${\cal F}(\Theta)$ is not wild. Then,   for any $d\in \hueca{N}$,
almost every indecomposable $\Upsilon$-module  $M\in {\cal F}(\Theta)$ with  $\dim_kM\leq d$ is  of the form $M\cong Z_G\otimes_{\Gamma_G}N$, for some generic $\Upsilon$-module  $G$ for ${\cal F}(\Theta)$ with $\Endol{G}\leq d$ and some indecomposable $N\in \Gamma_G\g\mod$ with $\dim_k N\leq d/\Endol{G}$. 
\end{proposition}

\begin{proof} Fix some $d\in \hueca{N}$. Since $\underline{\cal A}$ is not wild, we can apply (\ref{T: pregens de underline(A)}) to $d$ to obtain a   minimal ditalgebra ${\cal B}$ and a  composition of functors
$$B\g\Mod\rightmap{L_{\cal B}}{\cal B}\g\Mod\rightmap{F}\underline{\cal A}\g\Mod\rightmap{\mathfrak{G}}\widetilde{\cal F}(\Theta),$$
isomorphic to the tensor functor $T\otimes_B-$, where $T=\mathfrak{Z}\otimes_{\underline{A}}\overline{T}$. Moreover, 
 we know that, if $G_1,\ldots,G_n$ are, up to isomorphism, all the generic $\Upsilon$-modules in $\widetilde{\cal F}(\Theta)$  with $\Endol{G_i} \leq d$, then there are  $M_1,\ldots,M_n$ pregenerics in $\underline{\cal A}\g\Mod$ with $\mathfrak{G}(M_i)\cong G_i$. Moreover, from (\ref{P: endols y pregen normales})(2), we have $\Endol{M_i}\leq \Endol{G_i}\leq d$. So,  as in the last proof, there is some $N_i\in {\cal B}\g\Mod$ with $FL_{\cal B}(N_i)\cong M_i$. Here, $N_i\cong Q_i$, when we consider the field of fractions $Q_i$ of the rational algebra $\Gamma_{G_i}=(Be_i)_{g_i}$ as a $B$-module, so  $G_i\cong\mathfrak{G}FL_{\cal B}(Q_i)\cong Z_{G_i}\otimes_{\Gamma_{G_i}}Q_i$. 
 
Now, if $M\in {\cal F}(\Theta)$ with $\dim_k M\leq d$, there is some $M'\in \underline{\cal A}\g\Mod$ with $M\cong \mathfrak{G}(M')$ and, by (\ref{P: endols y pregen normales})(3), we have that $\dim_kM'\leq \dim_k\mathfrak{G}(M')=\dim_kM\leq d$. From (\ref{T: pregens de underline(A)})(3), $M'$ 
 is isomorphic to $\overline{T}\otimes_{B}N$, for some indecomposable $Be_i$-module $N$ with finite dimension over $k$.   In fact, since every reduction functor is dimension-controlling, we know that $\dim_kN\leq \dim_kM'\leq d$. Now, recall from the preceding proof that each bimodule  $Z_{G_i}=Te_i\otimes_{Be_i}(Be_i)_{g_i}$, associated to a given generic module $G_i$ for ${\cal F}(\Theta)$,  involves a localization of $k$-algebras $\phi:Be_i\rightmap{}(Be_i)_{g_i}$ whose restriction functor $F_\phi:\Gamma_G\g\mod=(Be_i)_g\g\mod\rightmap{}Be_i\g\mod$ hits all isoclasses of indecomposable $Be_i$-modules with  dimension bounded by $d$, with only finitely many exceptions. 

From the construction of the bimodules $Z_{G_1},\ldots, Z_{G_n}$ in the last proof,   we get that for almost every  indecomposable $\Upsilon$-module $M\in {\cal F}(\Theta)$ with $\dim_k M\leq d$ there are some  $G\in \{G_1,\ldots,G_n\}$  and $N\in \Gamma_G\g\mod$ such that 
$M\cong Z_G\otimes_{\Gamma_G}N$. Since $Z_G$ is free by the right with rank equal to the endolength of $G$, we get 
$\dim_kM=\dim_k N\times \Endol{G}$. 
Our claim follows from this. 
\end{proof}

\begin{remark}\label{R: final remark proof of (1.6)} Finally, we come to the proof of 
Theorem (\ref{T: main thm}). In the context of (\ref{R: context for proof of thm (1.5)}), we remarked that $\widetilde{\cal F}(\Theta)$ admits a ${\cal P}$-presentation $\mathfrak{G}:\underline{\cal A}\g\Mod\rightmap{}\widetilde{\cal F}(\Theta)$ and we derived from this that ${\cal F}(\Theta)$ is generically tame iff ${\cal F}(\Theta)$ is tame. Then, Theorem (\ref{T: main thm}), follows from  a particular case of (\ref{P: F(Delta) not wild --> F(Delta) gen tame}), again after some trivial notational variation.  
\end{remark}

\noindent{\bf Data availability}

No data was used for the research described in this article.

\hskip2cm

\vbox{\noindent R. Bautista\\
Centro de Ciencias Matem\'aticas\\
Universidad Nacional Aut\'onoma de M\'exico\\
Morelia, M\'exico\\
raymundo@matmor.unam.mx\\}

\vbox{\noindent E. P\'erez\\
Facultad de Matem\'aticas\\
Universidad Aut\'onoma de Yucat\'an\\
M\'erida, M\'exico\\
jperezt@correo.uady.mx\\}

\vbox{\noindent L. Salmer\'on\\
Centro de Ciencias  Matem\'aticas\\
Universidad Nacional Aut\'onoma de M\'exico\\
Morelia, M\'exico\\
salmeron@matmor.unam.mx\\}

\end{document}